\tikzset{
	symbol/.style={
		draw=none,
		every to/.append style={
			edge node={node [sloped, allow upside down, auto=false]{$#1$}}}
	}
}
\newcommand{\mx}{\mathfrak{X}}
\newcommand{\dr}{\mathbf{d}}
\newcommand{\ldr}[1]{{{\pounds}}_{#1}}
\title{Modules and representations up to homotopy of Lie $n$-algebroids}
\author{M.~Jotz Lean, R.~A.~Mehta, T.~Papantonis} \date{}
\theoremstyle{definition}
\newtheorem{theorem}{Theorem}[subsection]
\newtheorem{lemma}[theorem]{Lemma}
\newtheorem{proposition}[theorem]{Proposition}
\newtheorem{corollary}[theorem]{Corollary}
\newtheorem{definition}[theorem]{Definition}
\newtheorem*{question*}{Question}
\newtheorem*{conjecture*}{Conjecture}
\theoremstyle{remark}
\newtheorem*{remark*}{Remark}
\newtheorem{example}[theorem]{Example}
\newtheorem{remark}[theorem]{Remark}
\DeclareMathOperator{\Hom}{Hom}
\DeclareMathOperator{\End}{End}
\DeclareMathOperator{\sgn}{sgn}
\newcommand{\diff}{\mathrm{d}}
\DeclareMathOperator{\Der}{Der}
\DeclareMathOperator{\Jac}{Jac}
\DeclareMathOperator{\ad}{ad}
\DeclareMathOperator{\id}{id}
\newcommand{\cin}{\mathcal{C}^\infty} \newcommand{\Z}{\mathbb{Z}}
\newcommand{\R}{\mathbb{R}} \newcommand{\M}{\mathcal{M}}
 \newcommand{\Em}{\mathscr{E}}
\newcommand{\Fm}{\mathscr{F}} \newcommand{\N}{\mathcal{N}}
\newcommand{\Q}{\mathcal{Q}} \newcommand{\D}{\mathcal{D}}
\newcommand{\n}{[n]}
\newcommand{\E}{\underline{E}} \newcommand{\A}{\underline{A}}
\newcommand\xleftrightarrow[2][]{%
	\ext@arrow 9999{\longleftrightarrowfill@}{#1}{#2}}
\newcommand\longleftrightarrowfill@{%
	\arrowfill@\leftarrow\relbar\rightarrow}
\begin{document}

          \begin{abstract}
            This paper studies differential graded modules and
            representations up to homotopy of Lie $n$-algebroids, for
            general $n\in\mathbb{N}$. The adjoint and coadjoint
            modules are described, and the corresponding split
            versions of the adjoint and coadjoint representations up
            to homotopy are explained. In particular, the case of Lie
            2-algebroids is analysed in detail.  The compatibility of
            a Poisson bracket with the homological vector field of a
            Lie $n$-algebroid is shown to be equivalent to a morphism
            from the coadjoint module to the adjoint module, leading
            to an alternative characterisation of non-degeneracy of
            higher Poisson structures. Moreover, the Weil algebra of a
            Lie $n$-algebroid is computed explicitly in terms of
            splittings, and representations up to homotopy of Lie
            $n$-algebroids are used to encode decomposed VB-Lie
            $n$-algebroid structures on double vector bundles.
              \end{abstract}

              \maketitle
	
	\tableofcontents
	
	\section{Introduction}
	
	Lie $n$-algebroids, for $n\in\mathbb{N}$, are graded
        geometric structures which generalise the notion of
        Lie algebroids. They have become
        a field of much interest in mathematical physics, since they form a nice framework for
        higher analogues of Poisson and symplectic
        structures. 
        
        Courant algebroids \cite{LiWeXu97} give an important example
        of such higher structures. The work of Courant and Weinstein
        \cite{CoWe88} and of Hitchin and Gualtieri
        \cite{Hitchin03,Gualtieri03,Gualtieri07} show that Courant
        algebroids serve as a convenient framework for Hamiltonian
        systems with constraints, as well as for generalised geometry.
        A significant result from Roytenberg \cite{Roytenberg02} and
        \v{S}evera \cite{Severa05} shows that Courant algebroids are
        in one-to-one correspondence with Lie 2-algebroids which are
        equipped with a compatible symplectic structure.
	
	The standard super-geometric description of a Lie
        $n$-algebroid generalises the differential algebraic way of
        defining a usual Lie algebroid, as a vector bundle $A$ over a
        smooth manifold $M$ together with a degree 1 differential
        operator on the space
        $\Omega^\bullet(A):=\Gamma(\wedge^\bullet A^*)$. In the
        language of graded geometry, this is equivalent to a graded
        manifold of degree 1 equipped with a homological vector field
        \cite{Vaintrob97}, i.e.~a degree 1 derivation on its sheaf of
        functions which squares to zero and satisfies the graded
        Leibniz rule. A Lie $n$-algebroid is then defined as a graded
        manifold $\M$ of degree $n$, whose sheaf of functions
        $\cin(\M)$ is equipped with a homological vector field $\Q$.
          In more ``classical''
        geometric terms, a (split) Lie $n$-algebroid can also be defined as a
        graded vector bundle $\A=\bigoplus_{i=1}^n A_i$ over a smooth
        manifold $M$ together with some multi-brackets on its space of
        sections $\Gamma(\A)$ which satisfy some higher Leibniz and
        Jacobi identities \cite{ShZh17}. A Lie $n$-algebroid $(\M,\Q)$
        is called \emph{Poisson} if its underlying manifold carries a
        degree $-n$ Poisson structure $\{\cdot\,,\cdot\}$ on its sheaf
        of functions $\cin(\M)$, such that the homological vector
        field is a derivation of the Poisson bracket.

	A well-behaved representation theory of Lie $n$-algebroids for
        $n\geq2$ has not been developed yet. In the case $n=1$,
        i.e.~in the case of usual Lie algebroids, Gracia-Saz and Mehta
        \cite{GrMe10}, and independently Abad and Crainic
        \cite{ArCr12}, showed that the notion of
        \textit{representation up to homotopy} is a good notion of
        representation, which includes the adjoint
        representation. Roughly, the idea is to let the Lie algebroid
        act via a differential on Lie algebroid forms which take
        values on a cochain complex of vector bundles instead of just
        a single vector bundle. This notion is essentially a
        $\Z$-graded analogue of Quillen's super-representations
        \cite{Quillen85}. After their discovery, representations up to
        homotopy have been extensively studied in other works, see
        e.g.~\cite{Mehta09,ArCrDh11,ArSc11,ArSc13,DrJoOr15,Mehta15,TrZh16,GrJoMaMe18,
          BrCaOr18,Jotz19b,BrOr19,Vitagliano15b}. In particular, in
        \cite{Mehta09} it was shown that representations up to
        homotopy of Lie algebroids are equivalent, up to isomorphism,
        to Lie algebroid modules in the sense of \cite{Vaintrob97}.
	
	This paper extends the above notions of modules, and
        consequently of representations up to homotopy, to the context
        of higher Lie algebroids. The definition is the natural
        generalisation of the case of usual Lie algebroids explained
        above, i.e.~differential graded modules over the space of
        smooth functions of the underlying graded manifold.  The
        obtained notion is analysed in detail, including the two most
        important examples of representations, namely, \textit{the
          adjoint} and \textit{the coadjoint} representations (up to
        homotopy). An equivalent geometric point of view of a special
        class of representations is given by \emph{split VB-Lie
          $n$-algebroids}, i.e.~double vector bundles with a graded side and
        a linear split Lie $n$-algebroid structure over a split Lie $n$-algebroid.
   
        Our general motivation for studying representations up to
        homotopy of higher Lie $n$-algebroids comes from the case
        $n=2$, and in particular from Courant algebroids. More
        precisely, it is the search for a good notion not only of the
        adjoint representation of a Courant algebroid, but also of its
        ideals, similar to the work done in \cite{JoOr14}. The natural
        question that arises then is the following:
	
	\begin{question*}
		Is a compatible Poisson or symplectic structure on a Lie
		$n$-algebroid encoded in its adjoint representation?
	\end{question*}

        The answer to this question is positive, since it turns out
        that a Poisson bracket on a Lie $n$-algebroid gives rise to a
        natural map from the coadjoint to the adjoint
        representation which is an anti-morphism of left representations and a morphism of right representations (see Theorem \ref{thm_poisson}, Corollary
        \ref{cor_poisson} and Section
        \ref{morphism_of_ad*_ad_Poisson012}), i.e. it anti-commutes with the differentials of their structure as left representations and commutes with the differentials of their structure as right representations. Further, the Poisson
        structure is symplectic if and only if this map is in fact a
        left anti-isomorphism and right isomorphism. This result is already known in some special cases, including
        Poisson Lie $0$-algebroids, i.e.~ordinary Poisson manifolds
        $(M,\{\cdot\,,\cdot\})$, and Courant algebroids over a point,
        i.e.~quadratic Lie algebras
        $(\mathfrak{g},[\cdot\,,\cdot],\langle\cdot\,,\cdot\rangle)$. In
        the former case the map reduces to the natural map
        $\sharp\colon T^*M\to TM$ obtained from the Poisson bracket on
        $M$, and in the latter case it is the inverse of the map
        defined by the nondegenerate pairing
        $\mathfrak{g}\to\mathfrak{g^*},x\mapsto\langle x,\cdot
        \rangle$.
    
        \subsection*{Outline of the paper}

    This paper consists of seven sections and is organised as
    follows. Section \ref{prelim_sec} sets the notation and conventions,
    and recalls the definitions and constructions of graded vector
    bundles and Lie algebroids.
    
    Section \ref{Lie_n} offers a quick introduction to graded
    manifolds, (split) Lie $n$-algebroids, and Poisson and symplectic
    structures on Lie $n$-algebroids. In particular, it discusses the
    space of generalised functions of a Lie $n$-algebroid, gives the
    geometric description of a split Lie 2-algebroid \cite{Jotz19b}
    which is used in the rest of the paper, and defines the Weil
    algebra of a Lie $n$-algebroid -- as it is done in \cite{Mehta06}
    in the case $n=1$.
    
    Sections \ref{modules} and \ref{ruth} generalise the notions of
    Lie algebroid modules and representations up to homotopy to the
    setting of Lie $n$-algebroids. They offer a detailed explanation
    of the theory and give some useful examples, including the classes
    of the adjoint and coadjoint modules, whose properties are
    discussed thoroughly, especially in the case of Lie
    2-algebroids. Section \ref{modules} provides the answer to the
    question expressed above about the connection between higher
    Poisson or symplectic structures and the adjoint and coadjoint
    modules.
    
    Section \ref{Split VB-Lie n-algebroids} recalls some basic
    definitions and examples from the theory of double vector bundles
    and defines VB-Lie $n$-algebroids together with the prototype
    example of the tangent prolongation of a Lie $n$-algebroid. It
    also shows that there is a 1-1 correspondence between split VB-Lie
    $n$-algebroids and (left) representations up to homotopy of degree $n+1$,
    which relates again the adjoint representation of a Lie algebroid
    with its tangent prolongation.
    
    Finally, Section \ref{applications} discusses in the split case
    the results of this paper. It analyses the Weil algebra of a split
    Lie $n$-algebroid using vector bundles and connections, and it gives
    more details about the map between the coadjoint and adjoint
    representations for split Poisson Lie algebroids of degree
    $n\leq2$.

    \subsection*{Acknowledgements}
    The authors thank Miquel Cueca Ten, Luca Vitagliano and Chenchang
    Zhu for interesting discussions and remarks.  

    \subsection*{Relation to other work}
    During the preparation of this work, the authors learnt that
    Caseiro and Laurent-Gengoux also consider representations up to
    homotopy of Lie $n$-algebroids, in particular the adjoint
    representation, in their work in preparation \cite{CaLa19}.

    In \cite{Vitagliano15b}, Vitagliano considers representations of
    strongly homotopy of Lie-Rinehart algebras. Strongly homotopy Lie
    Rinehart algebras are the purely algebraic versions of graded
    vector bundles, over graded manifolds, equipped with a homological
    vector field that is tangent to the zero section. If the base
    manifold has grading concentrated in degree $0$ and the vector
    bundle is negatively graded, the notion recovers the one of split
    Lie $n$-algebroids. In that case, Vitagliano's representations
    correspond to the representations up to homotopy considered in
    this paper.

    In addition, since the DG $\mathcal M$-modules considered in this
    paper are the sheaves of sections of $\Q$-vector bundles, they are
    themselves also special cases of Vitagliano's strongly homotopy
    Lie-Rinehart algebras.

      \section{Preliminaries}\label{prelim_sec}
	
	This section recalls
    basic definitions and conventions that are used later on. In
    what follows, $M$ is a smooth manifold and all the considered
    objects are supposed to be smooth even if not explicitly
    mentioned. Moreover, all (graded) vector bundles are assumed
    to have finite rank.
	
	\subsection{(Graded) vector bundles and complexes}
	
	Given two ordinary vector bundles $E\to M$ and $F\to N$, there
        is a bijection between vector bundle morphisms $\phi\colon E\to F$
        covering $\phi_0\colon M\to N$ and morphisms of modules
        $\phi^\star\colon \Gamma(F^*)\to \Gamma(E^*)$
        over the pull-back
        $\phi_0^*\colon C^\infty(N)\to C^\infty(M)$. Explicitly, the
        map $\phi^\star$ is defined by
        $\phi^\star(f)(m)=\phi_m^*f_{\phi_0(m)}$, for
        $f\in\Gamma(F),m\in M$.
	
	Throughout the paper, underlined symbols denote
        graded objects. For instance, a graded vector bundle is a
        vector bundle $q\colon \underline{E}\to M$ together with a
        direct sum decomposition
	\[
	\underline{E}=\bigoplus_{n\in\mathbb{Z}}E_n[n]
	\]
	of vector bundles $E_n$ over $M$. The finiteness assumption
        for the rank of $\E$ implies that $\E$ is both upper and lower
        bounded, i.e. there exists a $n_0\in\mathbb{Z}$ such that
        $E_n=0$ for all $|n|>n_0$. Here, an element $e\in E_n$ is
        (degree-)homogeneous of degree $|e|=-n$. That is, for
        $k\in\Z$, the degree $k$ component of $\E$ (denoted with upper
        index $\E^k$) equals $E_{-k}$.
        
        All the usual algebraic constructions from the theory of ordinary
        vector bundles extend to the graded setting. More precisely,
        for graded vector bundles $\E,\underline{F}$, the dual $\E^*$,
        the direct sum $\E\oplus\underline{F}$, the space of graded
        homomorphisms $\underline{\Hom}(\E,\underline{F})$, the tensor
        product $\E\otimes\underline{F}$, and the symmetric and
        antisymmetric powers $\underline{S}(\E)$ and
        $\underline{A}(\E)$ are defined.

	A (cochain) complex of vector bundles is a graded vector
        bundle $\underline{E}$ over $M$ equipped with a degree one endomorphism
        (called the differential)
	\[
          \ldots\overset{\partial}{\to}E_{i+1}\overset{\partial}{\to}E_{i}\overset{\partial}{\to}E_{i-1}\overset{\partial}{\to}\ldots
	\]
	which squares to zero; $\partial^2=0$.
	
	Given two complexes $(\underline{E}, \partial)$ and
        $(\underline{F},\partial)$, one may construct new complexes by
        considering all the constructions that were discussed
        before. Namely, the bundles $\underline{S}(\underline{E})$,
        $\underline{A}(\underline{E})$, $\underline{E}^*$,
        $\underline{\Hom}(\underline{E},\underline{F})$ and
        $\underline{E}\otimes\underline{F}$ inherit a degree one
        operator that squares to 0. The basic principle for all the
        constructions is the graded derivation rule. For example, for
        $\phi\in\underline{\Hom}(\underline{E},\underline{F})$ and
        $e\in \underline{E}$:
	\[
	\partial(\phi(e)) = \partial(\phi)(e) + (-1)^{|\phi|}\phi(\partial(e)).	
	\]
	This can also be expressed using the language of (graded) commutators as
	\[
          \partial(\phi) = [\partial,\phi] = \partial\circ\phi -
          (-1)^{|\phi|}\phi\circ\partial=\partial\circ\phi -
          (-1)^{|\phi|\cdot|\partial|}\phi\circ\partial.
	\]
	
	The shift functor $[k]$, for $k\in\mathbb{Z}$, yields a new
        complex $(\E[k],\partial[k])$ whose $i$-th component is
        $\E[k]^i=\E^{i+k}=E_{-i-k}$ with differential
        $\partial[k]=\partial$. Formally, $\E[k]$ is obtained by
        tensoring with $(\mathbb{R}[k],0)$ from the right\footnote{If
          one chose to tensor from the left, the resulting complex
          would still have $i$-th component $\E[k]^i=\E^{i+k}$, but
          the Leibniz rule would give the differential
          $\partial[k] = (-1)^k \partial$.}. A degree $k$ morphism
        between two complexes $(\underline{E},\partial)$ and
        $(\underline{F},\partial)$ over $M$, or simply $k$-morphism,
        is, by definition, a degree preserving morphism
        $\phi\colon \underline{E}\to \underline{F}[k]$ over the
        identity on $M$; that is, a family of vector bundle maps
        $\phi_i\colon \E^i\to F[k]^i$ over the identity on $M$ that
        commutes with the differentials:\footnote{This becomes
          $\phi\circ\partial=(-1)^k\partial\circ\phi$ for the other
          convention.} $\phi\circ\partial=\partial\circ\phi$.
        
	\subsection{Dull algebroids vs Lie algebroids}
        A dull algebroid \cite{Jotz18a} is a vector bundle $Q\to M$
        endowed with an anchor $\rho_Q\colon Q\to TM$ and a bracket
        (i.e.~an $\mathbb R$-bilinear map)
        $[\cdot\,,\cdot]\colon\Gamma(Q)\times\Gamma(Q)\to\Gamma(Q)$ on
        its space of sections $\Gamma(Q)$, such that
          \begin{equation}\label{comp_anchor_bracket}
            \rho_Q[q_1,q_2] = [\rho_Q(q_1),\rho_Q(q_2)]
          \end{equation}
          and the Leibniz identity is satisfied in both entries:
		\[
                  [f_1q_1,f_2q_2] = f_1f_2[q_1,q_2] +
                  f_1\rho_Q(q_1)f_2\cdot q_2 - f_2\rho_Q(q_2)f_1\cdot
                  q_1,
		\]
		for all $q_1,q_2\in\Gamma(Q)$ and all $f_1,f_2\in
                C^\infty(M)$.

	A dull algebroid is a Lie algebroid if its
        bracket is also skew-symmetric and satisfies the Jacobi
        identity
	\[
	\Jac_{[\cdot\,,\cdot]}(q_1,q_2,q_3) := [q_1,[q_2,q_3]] - [[q_1,q_2],q_3] - [q_2, [q_1,q_3]] = 0,
	\]
	for all $q_1,q_2,q_3\in\Gamma(Q)$. 

    Given a skew-symmetric dull algebroid
    $Q$, there is an associated operator $\diff_Q$ of degree 1 on
    the space of $Q$-forms
    $\Omega^\bullet(Q) = \Gamma(\wedge^\bullet Q^*)$, defined by
    the formula
	\begin{align*}
	\diff_Q\tau(q_1,\ldots,q_{k+1}) = & \sum_{i<j}(-1)^{i+j}\tau([q_i,q_j],q_1,\ldots,\hat{q_i},\ldots,\hat{q_j},\ldots,q_{k+1}) \\
	& + \sum_i(-1)^{i+1}\rho_Q(q_i)(\tau(q_1,\ldots,\hat{q_i},\ldots,q_{k+1})),
	\end{align*}
	for $\tau\in\Omega^k(Q)$ and $q_1,\ldots,q_{k+1}\in\Gamma(Q)$; the notation $\hat{q}$ means that $q$ is omitted. The operator $\diff_Q$ satisfies as usual
	\[
	\diff_Q(\tau_1\wedge\tau_2) = (\diff_Q\tau_1)\wedge\tau_2 + (-1)^{|\tau_1|}\tau_1\wedge \diff_Q\tau_2,
	\]
	for $\tau_1,\tau_2\in\Omega^\bullet(Q)$. In general, the
        operator $\diff_Q$ squares to zero only on 0-forms
        $f\in \Omega^0(M)=C^\infty(M)$, since $\diff_Q^2f=0$ for all
        $f\in C^\infty(M)$ is equivalent to the compatibility of the
        anchor with the bracket \eqref{comp_anchor_bracket}. The
        identity $\diff_Q^2 = 0$ on all forms is equivalent to
        $(Q,\rho_Q,[\cdot\,,\cdot])$ being a Lie algebroid. 
	
	\subsection{Basic connections and basic curvature}
        Let $Q\to M$ be a skew-symmetric dull algebroid as in the last section and
        suppose that $E\to M$ is another vector bundle. A
        $Q$-connection on $E$ is defined similarly as usual, as a map
        $\nabla\colon\Gamma(Q)\times\Gamma(E)\to\Gamma(E),(q,e)\mapsto
        \nabla_q e$ that is $C^\infty(M)$-linear in the first
        argument and satisfies
	\[
	\nabla_q(fe) = \ldr{\rho_Q(q)}f\cdot e + f\nabla_q e,
	\]
	for all $q\in\Gamma(Q),e\in\Gamma(E)$ and $f\in
        C^\infty(M)$. The dual connection
        $\nabla^*$ is the $Q$-connection on $E^*$ defined by the
        formula
		\[
	\langle \nabla_q^* \varepsilon,e \rangle = \ldr{\rho_Q(q)}\langle \varepsilon,e \rangle - \langle \varepsilon,\nabla_q e \rangle,
	\]
	for all $\varepsilon\in\Gamma(E^*),e\in\Gamma(E)$ and
        $q\in\Gamma(Q)$, where $\langle\cdot\,,\cdot\rangle$ is the natural
        pairing between $E$ and its dual $E^*$.

        A $Q$-connection on a graded vector bundle
        $(\underline{E}=\bigoplus_{n\in\mathbb Z} E_n[n], \partial)$
        is a family of $Q$-connections $\nabla^n$, $n\in\mathbb N$, on
        each of the bundles $E_n$. If $\underline{E}$ is a complex
        with differential $\partial$, then the $Q$-connection is
        \emph{a connection on the complex $(\underline{E},\partial)$}
        if it commutes with $\partial$, i.e.,
        $\partial(\nabla^{n}_qe)=\nabla_q^{n-1}(\partial e)$ for
        $q\in\Gamma(Q)$ and $e\in \Gamma(E_n)$.
	
	The curvature of a $Q$-connection on a vector bundle $E$ is
        defined by
	\[
	R_\nabla(q_1,q_2)e = \nabla_{q_1}\nabla_{q_2} e - \nabla_{q_2}\nabla_{q_1} e - \nabla_{[q_1,q_2]} e,
	\] 
	for all $q_1,q_2\in\Gamma(Q)$ and $e\in\Gamma(E)$, and
        generally, it is an element of
        $\Gamma(Q^*\otimes Q^*\otimes E^*\otimes E)$. If the dull
        bracket of $Q$ is skew-symmetric, then the curvature is a
        2-form with values in the endomorphism bundle
        $\End(E)=E^*\otimes E$: $R_\nabla\in\Omega^2(Q,\End(E))$. A
        connection is called as usual \emph{flat} if its curvature
        $R_\nabla$ vanishes identically.
	
        Given a $Q$-connection $\nabla$ on $E$, and assuming that
        $[\cdot\,,\cdot]$ is skew-symmetric, there is an induced
        operator $\diff_\nabla$ on the space of $E$-valued $Q$-forms
        $\Omega^\bullet(Q,E) = \Omega^\bullet(Q)\otimes_{C^\infty(M)} \Gamma(E)$
        given by the usual Koszul formula
	\begin{align*}
	\diff_\nabla\tau(q_1,\ldots,q_{k+1}) = & \sum_{i<j}(-1)^{i+j}\tau([q_i,q_j],q_1,\ldots,\hat{q_i},\ldots,\hat{q_j},\ldots,q_{k+1}) \\
	& + \sum_i(-1)^{i+1}\nabla_{q_i}(\tau(q_1,\ldots,\hat{q_i},\ldots,q_{k+1})),
	\end{align*}
	for all $\tau\in\Omega^k(Q,E)$ and $q_1,\ldots,q_{k+1}\in\Gamma(Q)$. It satisfies
	\[
	\diff_\nabla(\tau_1\wedge\tau_2) = \diff_Q\tau_1\wedge\tau_2 + (-1)^{k}\tau_1\wedge \diff_\nabla\tau_2,
	\]
	for all $\tau_1\in\Omega^k(Q)$ and $\tau_2\in\Omega^\bullet(Q,E)$,
        and squares to zero if and only if $Q$ is a Lie algebroid and $\nabla$ is flat.
	
	Suppose that
        $\nabla\colon \mathfrak{X}(M)\times\Gamma(Q)\to\Gamma(Q)$ is a
        $TM$-connection on the vector bundle $Q$. The induced
        \textit{basic connections} on $Q$ and $TM$ are defined
        similarly as the ones associated to Lie algebroids \cite{GrMe10,ArCr12}:
	\[
          \nabla^{\text{bas}}=\nabla^{\text{bas},Q}\colon \Gamma(Q)\times\Gamma(Q)\to\Gamma(Q),\
          \nabla^{\text{bas}}_{q_1} q_2 = [q_1,q_2] +
          \nabla_{\rho_Q(q_2)} q_1
	\]
	and
	\[
          \nabla^{\text{bas}}=\nabla^{\text{bas},TM}\colon\Gamma(Q)\times\mathfrak{X}(M)\to\mathfrak{X}(M),\
          \nabla^{\text{bas}}_{q} X = [\rho_Q(q),X] + \rho_Q(\nabla_X
          q).
	\]
	The basic curvature is the form
        $R_\nabla^{\text{bas}}\in \Omega^2(Q, \Hom(TM,Q))$
        defined by
	\[
          R_\nabla^{\text{bas}}(q_1,q_2)X = -\nabla_X[q_1,q_2] +
          [q_1,\nabla_Xq_2] +[\nabla_Xq_1,q_2] +
          \nabla_{\nabla_{q_2}^\text{bas}X}q_1 -
          \nabla_{\nabla_{q_1}^\text{bas}X}q_2.
	\]
        The
        basic connections and the basic curvature satisfy
	\begin{equation}\label{eq_bas_Q_1}
	\nabla^{\text{bas},TM}\circ\rho_Q = \rho_Q\circ\nabla^{\text{bas},Q},
	\end{equation}
	\begin{equation}\label{eq_bas_Q_2}
	\rho_Q\circ R_\nabla^{\text{bas}} = R_{\nabla^{\text{bas},TM}},
	\end{equation}
	\begin{equation}\label{eq_bas_Q_3}
          R_\nabla^{\text{bas}}\circ\rho_Q + \Jac_{[\cdot\,,\cdot]} = R_{\nabla^{\text{bas},Q}}.
	\end{equation}

	\section{(Split) Lie $n$-algebroids and $\mathbb{N}\Q$-manifolds}\label{Lie_n}

	This section recalls basic results about
        $\mathbb{N}$-manifolds and Lie $n$-algebroids (based on
        \cite{Jotz18b}), and describes the Weil algebra of a Lie
        $n$-algebroid for general $n$ (see \cite{Mehta09} for
        $n=1$). It focusses on the category of \emph{split
          $\mathbb{N}$-manifolds}, which is isomorphic modulo some
        choices of splittings to the category of
        $\mathbb{N}$-manifolds (\cite{BoPo13, Roytenberg02}). 
	
	\subsection{(Split) $\mathbb{N}$-manifolds and homological vector fields}\label{lie2lie3}
	
	Graded manifolds of degree $n\in\mathbb{N}$ are defined as
        follows, in terms of sheaves over ordinary smooth manifolds.
	
	\begin{sloppypar}
		\begin{definition}
                  An \emph{$\mathbb{N}$-manifold $\M$ of degree $n$ and
                  dimension $(m;r_1,\ldots,r_n)$} is a sheaf
                  $\cin(\M)$ of $\mathbb{N}$-graded,
                  graded commutative, associative, unital
                  $C^\infty(M)$-algebras over a smooth $m$-dimensional
                  manifold $M$, which is locally freely generated by
                  $r_1+\ldots+r_n$ elements
                  $\xi_1^1,\ldots,\xi_1^{r_1}, \xi_2^1,\ldots,
                  \xi_2^{r_2},\ldots, \xi_n^1,\ldots,\xi_n^{r_n}$ with
                  $\xi_i^j$ of degree $i$ for $i\in\{1,\ldots,n\}$ and
                  $j\in \{1,\ldots,r_i\}$.
			
                  A morphism of $\mathbb{N}$-manifolds $\mu\colon \N\to\M$
                  over a smooth map $\mu_0\colon N\to M$ of the underlying
                  smooth manifolds is a morphism of sheaves of graded
                  algebras $\mu^\star\colon \cin(\M)\to \cin(\N)$
                  over $\mu_0^\ast\colon C^\infty(M)\to C^\infty(N)$.
		\end{definition}
	\end{sloppypar}
	
	For short, ``$\n$-manifold'' means ``$\mathbb{N}$-manifold of
        degree $n$''. The degree of a (degree-)homogeneous element
        $\xi\in \cin(\M)$ is written $|\xi|$. Note that the degree 0
        elements of $\cin(\M)$ are just the smooth functions of the
        manifold $M$. By definition, a $[1]$-manifold $\M$ is a
        locally free and finitely generated sheaf $ \cin(\M)$ of
        $C^\infty(M)$-modules. That is, $\cin(\M)=\Gamma(\wedge E^*)$
        for a vector bundle $E\to M$. In that case, $\M=:E[1]$. \emph{Recall
        that this means that the elements of $E$ have degree $-1$, and
        so the sections of $E^*$ have degree $1$.}
	
      Consider now a (non-graded) vector bundle $E$ of rank $r$
      over the smooth manifold $M$ of dimension $m$. Similarly as before, assigning the
      degree $n$ to the fibre coordinates of $E$ defines an
      $[n]$-manifold of dimension $(m;r_1=0,\ldots,r_{n-1}=0,r_n=r)$
      denoted by $E[n]$, with $\cin(E[n])^n=\Gamma(E^*)$. More
      generally, let $E_1,\ldots,E_n$ be vector bundles of ranks
      $r_1,\ldots,r_n$, respectively, and assign the degree $i$ to the
      fibre coordinates of $E_i$, for each $i=1,\ldots,n$. The direct
      sum $\E=E_1[1]\oplus\ldots\oplus E_n[n]$ is a graded vector
      bundle with grading concentrated in degrees $-1,\ldots,-n$. When
      seen as an $[n]$-manifold, $E_1[1]\oplus\ldots\oplus E_n[n]$ has
      the local basis of sections of $E_i^*$ as local generators of
      degree $i$ and thus its dimension is $(m;r_1,\ldots,r_n)$.

	\begin{definition}
          An $[n]$-manifold of the form
          $E_1[1]\oplus\ldots\oplus E_n[n]$ as above is called a
          \emph{split $[n]$-manifold}.
	\end{definition}
	
	The relation between $[n]$-manifolds and split $[n]$-manifolds
        is explained by the following theorem, which is implicit in
        \cite{Roytenberg02} and explicitly proved in \cite{BoPo13}.
	\begin{theorem}
		Any $[n]$-manifold is non-canonically diffeomorphic to a split $[n]$-manifold.
	\end{theorem}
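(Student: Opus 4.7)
The plan is to proceed by induction on the degree $n$. The base case $n=1$ is immediate from the discussion preceding the statement: by definition of a $[1]$-manifold, $\cin(\M) = \Gamma(\wedge E^*)$ for a vector bundle $E \to M$, so $\M = E[1]$ is already split.

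For the inductive step, given an $[n]$-manifold $\M$ of dimension $(m;r_1,\ldots,r_n)$, the strategy is to peel off the top-degree generators. Let $\mathcal{A}^n \subseteq \cin(\M)^n$ denote the $C^\infty(M)$-submodule generated by products of homogeneous elements of strictly lower positive degree; locally it is spanned by monomials $\xi_{i_1}^{j_1}\cdots \xi_{i_k}^{j_k}$ with each $i_\ell < n$ and $\sum_\ell i_\ell = n$. The quotient sheaf $\cin(\M)^n / \mathcal{A}^n$ is therefore locally freely generated by the classes of $\xi_n^1,\ldots,\xi_n^{r_n}$, and thus identifies with $\Gamma(E_n^*)$ for a well-defined vector bundle $E_n^* \to M$ of rank $r_n$. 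A $C^\infty(M)$-linear splitting $\sigma\colon \Gamma(E_n^*) \to \cin(\M)^n$ of this projection exists locally by choice of frame, and is globalised via a partition of unity on $M$.

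Next, one shows that the subsheaf $\mathcal{B} \subseteq \cin(\M)$ of graded subalgebras generated by homogeneous elements of degree $< n$ is itself the sheaf of functions of an $[n-1]$-manifold $\M'$ over $M$ of dimension $(m;r_1,\ldots,r_{n-1})$; this is a local check on the generators $\xi_1^j,\ldots,\xi_{n-1}^j$. The inductive hypothesis furnishes vector bundles $E_1,\ldots,E_{n-1}$ and a diffeomorphism $\M' \cong E_1[1] \oplus \cdots \oplus E_{n-1}[n-1]$. Combined with $\sigma$, the universal property of the graded-symmetric algebra then yields a morphism of sheaves of graded algebras
\[
\cin\bigl(E_1[1] \oplus \cdots \oplus E_n[n]\bigr) \longrightarrow \cin(\M),
\]
which sends the generators of $\Gamma(E_n^*)$ via $\sigma$ and the generators of $\Gamma(E_i^*)$ for $i<n$ via the inductive splitting. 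A local computation on a trivialising chart shows that this map is an isomorphism on generators, hence a diffeomorphism.

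The main obstacle is the globalisation of the splittings, together with the coherence of the resulting algebra map: locally there is no difficulty, since $\cin(\M)$ is by hypothesis the appropriate graded polynomial algebra on the $\xi_i^j$, but one must patch these local descriptions across $M$ so as to respect the full graded commutative algebra structure (and not merely the underlying $C^\infty(M)$-module structure). Partitions of unity easily handle the linear splittings $\sigma$ at each degree, but verifying that the resulting global map is a morphism of sheaves of algebras, and that the inclusion $\mathcal{B} \hookrightarrow \cin(\M)$ indeed exhibits $\M'$ as an honest $[n-1]$-manifold and not merely a filtration, is the technical heart of the argument. The non-canonicity stated in the theorem reflects exactly the choice of $\sigma$ (or equivalently, the choice of lift of each $\xi_n^j$) at each inductive stage.
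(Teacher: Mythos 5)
The paper offers no proof of this theorem, deferring instead to Roytenberg \cite{Roytenberg02} and Bonavolont\`a--Poncin \cite{BoPo13}, and your induction on the degree --- identifying the subalgebra generated in degrees $<n$ as the canonical underlying $[n-1]$-manifold, extracting $E_n$ from the locally free quotient $\cin(\M)^n/\mathcal{A}^n$, choosing a $C^\infty(M)$-linear splitting by partition of unity, and invoking the universal property of the graded symmetric algebra --- is precisely the Batchelor-type argument proved there. The outline is sound, and the steps you flag as the technical heart (that $\mathcal{A}^n$ is locally spanned by the decomposable monomials so the quotient is locally free of rank $r_n$, and that the resulting algebra morphism is locally an isomorphism via an upper-triangular change of generators) are indeed routine local verifications.
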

        Note that under the above correspondence, the structure sheaf
        of an [$n$]-manifold
        $\M \simeq \underline{E} = E_1[1]\oplus\ldots\oplus E_n[n]$ becomes
\[
  \cin(\M) \simeq \Gamma(\underline{S}(\underline{E}^*)),
\]
and a different choice of splitting leaves the bundles unchanged. In
particular, for the case of a split [2]-manifold
$\M=\underline{E}=E_1[1]\oplus E_2[2]$ the graded functions are
\[
\cin(\M)=\Gamma(\underline{S}(\underline{E}^*))= \Gamma(\wedge E^*_1\otimes SE^*_2),
\]
where the grading is defined such that
\[
  \cin(\M)^i=\bigoplus_{k+2\ell=i}\Gamma\left(\wedge^kE^*_1\otimes
    S^\ell E^*_2\right).
\]

Using the language of graded derivations, the usual notion of  vector
field can be generalized to a notion of vector field on an $[n]$-manifold
$\M$.
\begin{definition}
  A vector field of degree $j$ on $\M$ is a degree $j$ (graded)
  derivation of $\cin(\M)$, i.e. a map
  $\mathcal{X}\colon\cin(\M)\to \cin(\M)$ such that
  $|\mathcal{X}(\xi)|=j+|\xi|$ and
  $\mathcal{X}(\xi\zeta)=\mathcal{X}(\xi)\zeta+(-1)^{j|\xi|}\xi
  \mathcal{X}(\zeta)$, for homogeneous elements
  $\xi,\zeta\in C^\infty(\M)$.
      \end{definition}
  As usual, $|\mathcal{X}|$ denotes the degree of a homogeneous vector field $\mathcal{X}$. The Lie bracket of two vector fields
  $\mathcal{X},\mathcal{Y}$ on $\M$ is the graded commutator
  \[
  [\mathcal{X},\mathcal{Y}]=\mathcal{X}\mathcal{Y}-(-1)^{|\mathcal{X}||\mathcal{Y}|}\mathcal{Y}\mathcal{X}.
  \]
The following relations hold:
\begin{enumerate}[(i)]
	\item $[\mathcal{X},\mathcal{Y}]=-(-1)^{|\mathcal{X}||\mathcal{Y}|}[\mathcal{Y},\mathcal{X}]$,
	\item
          $[\mathcal{X},\xi
          \mathcal{Y}]=\mathcal{X}(\xi)\mathcal{Y}+(-1)^{|\mathcal{X}||\xi|}\xi[\mathcal{X},\mathcal{Y}]$,
	\item $(-1)^{|\mathcal{X}||\mathcal{Z}|}[\mathcal{X},[\mathcal{Y},\mathcal{Z}]]
	+(-1)^{|\mathcal{Y}||\mathcal{X}|}[\mathcal{Y},[\mathcal{Z},\mathcal{X}]]
	+(-1)^{|\mathcal{Z}||\mathcal{Y}|}[\mathcal{Z},[\mathcal{X},Y]]=0$,
\end{enumerate}
for $\mathcal{X},\mathcal{Y},\mathcal{Z}$ homogeneous vector fields on $\M$, and $\xi$
a homogeneous element of $\cin(\M)$.
  
Local generators $\xi_i^j$ of $\cin(\M)$ over an open set
$U\subseteq M$ given by the definition of $\M$ define the (local)
vector fields $\partial_{\xi_i^j}$ of degree $-j$, which sends
$\xi_i^j$ to $1$ and the other local generators to $0$. The sheaf
$\underline{\Der}_U(\cin(\M))$ of graded derivations of $\cin_U(\M)$ is
freely generated as a $C^\infty_U(\M)$-module by $\partial_{x_k}$
and $\partial_{\xi_i^j}$, where $x_1,\ldots,x_m$ are coordinates for
$M$ defined on $U$.
  
Note that in the case of a split $[n]$-manifold
$E_1[1]\oplus\ldots\oplus E_n[n]$, each section $e\in\Gamma(E_j)$
defines a derivation $\hat{e}$ of degree $-j$ on $\M$ by the
relations: $\hat{e}(f)=0$ for $f\in\cin(M)$,
$\hat{e}(\varepsilon)=\langle\varepsilon,e\rangle$ for
$\varepsilon\in\Gamma(E_j^*)$ and $\hat{e}(\varepsilon)=0$ for
$\varepsilon\in\Gamma(\E^*)$ with $|\varepsilon|\neq j$. In
particular, $\widehat{e_j^i}=\partial_{\varepsilon_j^i}$ for
$\{e_j^i\}$ a local basis of $E_j$ and $\{\varepsilon_j^i\}$ the dual
basis of $E_j^*$.

Given $TM$-connections $\nabla^i\colon\mathfrak{X}(M)\to\Der(E_i)$ for all
$i$, the space of vector fields over a split $[n]$-manifold $\M$ is generated as a
$\cin(\M)$-module by
\[
  \{ \nabla^1_X \oplus \ldots \oplus \nabla^n_X\ |\
  X\in\mathfrak{X}(M) \}\cup\{ \hat{e}\ |\ e\in\Gamma(E_i)\ \text{for
    some}\ i \}.
\]
The vector fields of the form
$\nabla^1_X \oplus \ldots \oplus \nabla^n_X$ are of degree 0 and are
understood to send $f\in C^\infty(M)$ to $X(f)\in C^\infty(M)$, and
$\varepsilon\in\Gamma(E_i^*)$ to
$\nabla_X^{i,*}\varepsilon\in\Gamma(E_i^*)$. The negative degree vector
fields are generated by those of the form $\hat{e}$.
\begin{definition}
  A homological vector field $\Q$ on an $[n]$-manifold $\M$ is a degree 1 derivation of $\cin(\M)$ such that
  $\Q^2=\frac{1}{2}[\Q,\Q]=0$.
\end{definition}
A homological vector field on a $[1]$-manifold $\M=E[1]$ is a
differential $\diff_E$ associated to a Lie algebroid structure on the
vector bundle $E$ over $M$ \cite{Vaintrob97}. The following definition
generalizes this to arbitrary degrees.

\begin{definition}\label{abstract_Lie_algebroids}
  A \emph{Lie $n$-algebroid} is an $[n]$-manifold $\M$ endowed with a
  homological vector field $\Q$ -- the pair $(\M, \Q)$ is also called
  \emph{$\mathbb{N}\Q$-manifold of degree $n$}. A \emph{split Lie
    $n$-algebroid} is a split $[n]$-manifold $\M$ endowed with a
    homological vector field $\Q$. A \emph{morphism of (split) Lie
    $n$-algebroids} is a morphism $\mu$ of the underlying [$n$]-manifolds
    such that $\mu^\star$ commutes with the homological vector fields.
\end{definition}

The homological vector field associated to a split Lie $n$-algebroid
$\A=A_1[1]\oplus\ldots\oplus A_n[n]\to M$ can be equivalently
described by a family of brackets which satisfy some Leibniz and
higher Jacobi identities \cite{ShZh17}. More precisely, a homological
vector field on $\A$ is equivalent to an $L_\infty$-algebra
structure\footnote{We note that the sign convention agrees with,
  e.g. \cite{KaSt06,Voronov05}. In \cite{Schatz09}, the term
  ``$L_\infty[1]$-algebra'' was used for brackets with this sign
  convention.}  on $\Gamma(\A)$ that is anchored by a vector bundle
morphism $\rho\colon A_1\to TM$.  Such a structure is given by
multibrackets
$\llbracket\cdot\,,\ldots,\cdot\rrbracket_i\colon\Gamma(\A)^i\to
\Gamma(\A)$ of degree $1$ for $1\leq i \leq n+1$ such that
 \begin{enumerate}
   \item $\llbracket\cdot\,,\cdot\rrbracket_2$ satisfies the Leibniz identity with
     respect to $\rho$,
     \item $\llbracket\cdot\,,\ldots,\cdot\rrbracket_i$ is $C^\infty(M)$-linear in each entry
       for all $i\neq2$,
     \item \emph{(graded skew symmetry)} each
       $\llbracket\cdot,\ldots,\cdot\rrbracket_i$ is graded
       alternating: for a permutation $\sigma\in S_i$ and for all
       $a_1,\ldots,a_i\in\Gamma(\A)$ degree-homogeneous sections
       \[\llbracket a_{\sigma(1)}, a_{\sigma(2)},\ldots,a_{\sigma(k)}\rrbracket_i
       =\text{Ksgn}(\sigma,a_1,\ldots,a_k)\cdot \llbracket a_1,a_2,\ldots,a_k\rrbracket_i, \] and
   \item \emph{(strong homotopy Jacobi identity)} for $k\in \mathbb N$
     and $a_1,\ldots, a_k\in\Gamma(\A)$ sections of homogeneous
     degree:
\[
  \sum_{i+j=k+1}(-1)^{i(j-1)}\sum_{\sigma\in\text{Sh}_{i,k-i}}
  \text{Ksgn}(\sigma,a_1,\ldots,a_k)\llbracket \llbracket
  a_{\sigma(1)},\ldots,a_{\sigma(i)} \rrbracket_i,
  a_{\sigma(i+1)},\ldots,a_{\sigma(k)} \rrbracket_j = 0.
\]
\end{enumerate}
Here, $\text{Sh}_{i,k-i}$ is the set of all
$(i,k-i)$-shuffles\footnote{A $(i,k-i)$-shuffle is an element
  $\sigma\in S_k$ such that $\sigma(1)<\ldots<\sigma(i)$ and
  $\sigma(i+1)<\ldots<\sigma(k)$.  } and
$\text{Ksgn}(\sigma,a_1,\ldots,a_k)$ is the $(a_1,\dots,a_k)$-graded
signature of the permutation $\sigma\in S_k$, i.e.
\[
  a_1\wedge\ldots\wedge a_k = \text{Ksgn}(\sigma, a_1,
  \ldots,a_k)a_{\sigma(1)}\wedge\ldots\wedge a_{\sigma(k)}.
\]

\medskip This gives the following alternative geometric description of
a split Lie $2$-algebroid
$(\mathcal M=A_1[1]\oplus A_2[2],\mathcal Q)$, see \cite{Jotz19b}.
For consistency with the notation in \cite{Jotz19b}, set $A_1:=Q$ and
$A_2^*=:B$.

\begin{definition}\label{geom_split_2-alg}
  A split Lie 2-algebroid $Q[1]\oplus B^*[2]$ is given by a pair of
  an anchored vector bundle $(Q\to M,\rho_Q)$ and a vector bundle
  $B\to M$, together with a vector bundle map $\ell\colon B^*\to Q$, a
  skew-symmetric dull bracket
  $[\cdot\,,\cdot]\colon \Gamma(Q)\times\Gamma(Q)\to \Gamma(Q)$, a
  linear $Q$-connection $\nabla$ on $B$, and a vector valued 3-form
  $\omega\in\Omega^3(Q,B^*)$ such that
	\begin{enumerate}[(i)]
		\item $\nabla^*_{\ell(\beta_1)}\beta_2 + \nabla^*_{\ell(\beta_2)}\beta_1 = 0$, for all $\beta_1,\beta_2\in\Gamma(B^*)$,
		\item $[q,\ell(\beta)]=\ell(\nabla_q^*\beta)$ for all
                  $q\in\Gamma(Q)$ and $\beta\in\Gamma(B^*)$,
		\item $\Jac_{[\cdot\,,\cdot]} = \ell\circ\omega\in\Omega^3(Q,Q)$,
		\item $R_{\nabla^*}(q_1,q_2)\beta = -\omega(q_1,q_2,\ell(\beta))$ for $q_1,q_2\in\Gamma(Q)$ and $\beta\in\Gamma(B^*)$,
		\item $\diff_{\nabla^*}\omega = 0$.
	\end{enumerate}  
      \end{definition}
      To pass
      from the definition above to the homological vector field $\Q$,
      set $\Q(f)=\rho^*\diff f \in\Gamma(Q^*)$,
      $\Q(\tau)=\diff_{Q}\tau+\partial_B\tau \in \Omega^2(Q)\oplus
      \Gamma(B)$, and
      $\Q(b)=\diff_{\nabla}b - \langle\omega, b\rangle \in \Omega^1(Q,
      B)\oplus \Omega^3(Q)$ for $f\in C^\infty(M),\tau\in\Omega(Q)$
      and $b\in\Gamma(B)$, where $\partial_B:=\ell^*$.

      On the other hand we may obtain the data of Definition
      \ref{geom_split_2-alg} from a given homological vector field
      $\Q$ as follows. Define the vector bundle map $\ell$ to be the 1-bracket and
      $\rho$ to be the anchor. The 2-bracket induces the skew-symmetric dull bracket
      on $Q$ and the $Q$-connection on $B^*$ via the formula
\[
  \llbracket q_1\oplus\beta_1,q_2\oplus\beta_2\rrbracket_2 =
  [q_1,q_2]_Q\oplus(\nabla_{q_1}^*\beta_2 - \nabla_{q_2}^*\beta_1).
\]
Finally, the 3-bracket induces the 3-form $\omega$ via the formula
\[
\llbracket q_1\oplus0,q_2\oplus0,q_3\oplus0\rrbracket_3 = 0\oplus\omega(q_1,q_2,q_3). 
\]

\begin{example}[Lie 2-algebras]
  If we consider a Lie 2-algebroid over a point, then we recover the
  notion of \emph{Lie 2-algebra} \cite{BaCr04}. Specifically, a Lie
  2-algebroid over a point consists of a pair of vector spaces
  $\mathfrak{g}_0,\mathfrak{g}_1$, a linear map
  $\ell\colon\mathfrak{g}_0\to\mathfrak{g}_1$, a skew-symmetric
  bilinear bracket
  $[\cdot\,,\cdot]\colon
  \mathfrak{g}_1\times\mathfrak{g}_1\to\mathfrak{g}_1$, a bilinear
  \textit{action bracket}
  $[\cdot\,,\cdot]\colon\mathfrak{g}_1\times\mathfrak{g}_0\to\mathfrak{g}_0$,
  and an alternating trilinear bracket
  $[\cdot\,,\cdot\,,\cdot]\colon\mathfrak{g}_1\times\mathfrak{g}_1\times\mathfrak{g}_1\to\mathfrak{g}_0$
  such that
  \begin{enumerate}
  \item $[\ell(x),y] + [\ell(y),x] = 0$ for $x,y\in\mathfrak{g}_0$,
  \item $[x,\ell(y)] = \ell([x,y])$ for $x\in\mathfrak{g}_1$ and $y\in\mathfrak{g}_0$,
  \item $\Jac_{[\cdot,\cdot]}(x,y,z) = \ell([x,y,z])$ for
    $x,y,z\in\mathfrak{g}_1$,
  \item $[[x,y],z] +[y,[x,z]] -[x,[y,z]] = [x,y,\ell(z)]$ for
    $x,y\in\mathfrak{g}_1$ and $z\in\mathfrak{g}_0$,
\item and the higher Jacobi identity
	\begin{align*}
          0 = & [x,[y,z,w]] - [y,[x,z,w]]+[z,[x,y,w]]- [w,[x,y,z]]\\ 
&  - [[x,y],z,w] + [[x,z],y,w] - [[x,w],y,z]- [[y,z],x,w] + [[y,w],x,z] - [[z,w],x,y].
	\end{align*}
	holds for $x,y,z,w\in\mathfrak{g}_1$.
\end{enumerate}
\end{example}

\begin{example}[Derivation Lie 2-algebr(oid)]
  For any Lie algebra $(\mathfrak{g},[\cdot\,,\cdot]_\mathfrak{g})$, the
  derivation Lie 2-algebra is defined as the complex
	\[
	\ad\colon \mathfrak{g}\to\Der(\mathfrak{g})
	\]
	with brackets given by
        $[\delta_1,\delta_2] = \delta_1\delta_2 - \delta_2\delta_1$,
        $[\delta,x] = \delta x$, $[\delta_1,\delta_2,\delta_3] = 0$
        for all $\delta,\delta_i\in\Der(\mathfrak{g}),i=1,2,3$, and
        $x\in\mathfrak{g}$.

        A global analogue of this construction can be achieved only
        under strong assumptions on the Lie algebroid $A\to
        M$. Precisely, let $A\to M$ be a Lie algebra bundle. Then the
        space of all derivations $D$ of the vector bundle $A$ which
        preserve the bracket
	\[
	D[a_1,a_2] = [Da_1,a_2] + [a_1,Da_2]
	\]
	is the module of sections of a vector bundle over $M$, denoted
        $\Der_{[\cdot,\cdot]}(A)\to M$. Together with the usual
        commutator bracket and the anchor $\rho'(D)=X$, where $D$ is a
        derivation of $\Gamma(A)$ covering $X\in\mathfrak{X}(M)$, the
        vector bundle $\Der_{[\cdot,\cdot]}(A)$ is a Lie algebroid
        over $M$ \cite{Mackenzie05}. Since the anchor of $A$
        is trivial, the complex
	\[
	A\overset{\ad}{\to}\Der_{[\cdot\,,\cdot]}(A)\overset{\rho'}{\to}TM
	\]
	becomes a Lie 2-algebroid with
        $\Der_{[\cdot,\cdot]}(A)$-connection on $A$ given by
        $\nabla_Da = Da$ and $\omega=0$.
\end{example}

\begin{example}[Courant algebroids]\label{Split_symplectic_Lie_2-algebroid_example}
  Let    $E\to   M$    be   a    Courant   algebroid    with   pairing
  $\langle\cdot\,,\cdot\rangle\colon E\times_M E\to  E$, anchor $\rho$
  and Dorfman bracket  $\llbracket\cdot\,,\cdot\rrbracket$, and choose
  a               metric               linear               connection
  $\nabla\colon   \mathfrak{X}(M)\times\Gamma(E)\to\Gamma(E)$.    Then
  $E[1]\oplus T^*M[2]$  becomes as follows a  split Lie $2$-algebroid.
  The     skew-symmetric     dull      bracket     is     given     by
  $[e,e'] =  \llbracket e,e'  \rrbracket -  \rho^*\langle \nabla_.e,e'
  \rangle$ for all $e,e'\in\Gamma(E)$.  The \emph{basic connection} is
  $\nabla^\text{bas}\colon\Gamma(E)\times\mathfrak{X}(M)\to\mathfrak{X}(M)$
  and     the     \textit{basic     curvature}     is     given     by
  $\omega_\nabla\in\Omega^2(E,\Hom(TM,E))$
	\[
          \omega_\nabla(e,e')X = -\nabla_X\llbracket e,e' \rrbracket +
          \llbracket \nabla_Xe,e' \rrbracket + \llbracket e,\nabla_Xe'
          \rrbracket + \nabla_{\nabla_{e'}^{\text{bas}} X} e -
          \nabla_{\nabla_{e}^{\text{bas}} X} e' - P^{-1}\langle
          \nabla_{\nabla_{.}^{\text{bas}}X}e,e' \rangle,
	\]
	where $P\colon E\to E^*$ is the isomorphism defined by the
        pairing, for all $e,e'\in\Gamma(E)$ and
        $X\in\mathfrak{X}(M)$. The map $\ell$ is
        $\rho^*\colon T^*M\to E$, the $E$-connection on $T^*M$ is
        $\nabla^{\text{bas},*}$ and the form
        $\omega\in\Omega^3(E,T^*M)$ is given by
        $\omega(e_1,e_2,e_3)=\langle \omega_\nabla(e_1,e_2)(\cdot),e_3
        \rangle$. The kind of split Lie 2-algebroids that arise in
        this way are the split symplectic Lie 2-algebroids
        \cite{Roytenberg02}. They are splittings of the symplectic Lie
        $2$-algebroid which is equivalent to the tangent prolongation
        of $E$, which is an LA-Courant algebroid
        \cite{Jotz19b,Jotz18d}.
      \end{example}

\subsection{Generalized functions of a Lie $n$-algebroid}
In the following, $(\M,\Q)$ is a Lie $n$-algebroid with underlying
manifold $M$. Consider the space
$\cin(\M)\otimes_{C^\infty(M)}\Gamma(\E)$ for a graded vector bundle
$\E\to M$ of finite rank. For simplicity,
$\cin(\M)\otimes_{C^\infty(M)}\Gamma(\E)$ is sometimes written
$\cin(\M)\otimes\Gamma(\E)$. That is, these tensor products in the
rest of the paper are always of $C^\infty(M)$-modules.

First suppose that $(\M,\Q)=(A[1],\diff_A)$ is a Lie algebroid. The
space of $\E$-valued differential forms
$\Omega(A;\E):=\Omega(A)\otimes_{C^\infty(M)}\Gamma(\E)=\cin(A[1])\otimes_{C^\infty(M)}\Gamma(\E)$
has a natural grading given by
\[
\Omega(A;\E)_p=\bigoplus_{i-j=p}\Omega^i(A;E_j).
\]
It is well-known (see \cite{ArCr12}) that any degree preserving vector
bundle map $h\colon \E\otimes \underline{F}\to \underline{G}$ induces a wedge product operation
\[
  (\cdot\wedge_h\cdot)\colon\Omega(A;\E)\times\Omega(A;\underline{F})\to
  \Omega(A;\underline{G})
\]
which is defined on  $\omega\in\Omega^p(A;E_i)$ and $\eta\in\Omega^q(A;F_j)$ by
\[
  (\omega\wedge_h\eta)(a_1,\ldots,a_{p+q})=\sum_{\sigma\in
    \text{Sh}_{p,q}}(-1)^{qi}\sgn(\sigma)h\left(\omega(a_{\sigma(1)},\ldots,a_{\sigma(p)}),\eta(a_{\sigma(p+1)},\ldots,a_{\sigma(p+q)})\right),
\]
for all $a_1,\ldots,a_{p+q}\in\Gamma(A)$. 

	In particular, the above rule reads
	\[
	\theta\wedge_h\zeta=(-1)^{qi}\left(\omega\wedge\eta\right)\otimes h(e,f),
	\]
	for all $\theta=\omega\otimes e$ and $\zeta=\eta\otimes f$ where
        $\omega$ is a $p$-form, $\eta$ is a $q$-form, and $e$ and $f$
        are homogeneous sections of $\E$ and $\underline{F}$ of degree $i$ and $j$,
        respectively.

        Some notable cases for special choices of the map $h$ are
        given by the identity, the composition of endomorphisms, the
        evaluation and the `twisted' evaluation maps, the graded
        commutator of endomorphisms and the natural pairing of a
        graded vector bundle with its dual. In particular, the evaluation $(\Phi,e)\mapsto \Phi(e)$ and the twisted
        evaluation $(e,\Phi)\mapsto(-1)^{|\Phi||e|}\Phi(e)$ make $\Omega(A;\E)$ a graded
        $\Omega(A;\underline{\End}(\E))$-bimodule.

In the general case of a Lie $n$-algebroid $(\M,\Q)$, the space $\Omega(A)$ is
replaced by the generalized smooth functions $\cin(\M)$ of
$\M$. The space $\cin(\M)\otimes_{C^\infty(M)}\Gamma(\E)$ has a natural grading,
where the homogeneous elements of degree $p$ are given by
\[
\bigoplus_{i-j=p}\cin(\M)^i\otimes\Gamma(E_j).
\]

Similarly as in the case of a Lie algebroid, given a degree preserving map
\[
h\colon \E\otimes \underline{F}\to \underline{G},
\]
one obtains the multiplication
\begin{align*}
  \left(\cin(\M)\otimes\Gamma(\E)\right)\times \left(\cin(\M)\otimes\Gamma(\underline{F})\right)\to\, & \cin(\M)\otimes\Gamma(\underline{G})\\
  (\omega,\eta)\mapsto\, & \omega\wedge_h\eta.
\end{align*}
In particular, for elements of the form
$\xi\otimes e\in \cin(\M)^i\otimes\Gamma(E_j),\zeta\otimes f\in
\cin(\M)^k\otimes\Gamma(F_\ell)$ the above rule reads
\[
\left(\xi\otimes e\right)\wedge_h\left(\zeta\otimes f\right)=(-1)^{(-j)k}\xi\zeta\otimes h(e,f),
\]
where on the right hand side the multiplication $\xi\zeta$ is the one
in $\cin(\M)$. The special cases above are defined similarly for the
$n$-algebroid case. Moreover,
$\cin(\M)\otimes_{C^\infty(M)}\Gamma(\E)$ is endowed with the
structure of a graded
$\cin(\M)\otimes_{C^\infty(M)}\Gamma(\underline{\End}(\E))$-bimodule.

Finally, the following lemma will be useful later as it is a
generalisation of \cite[Lemma A.1]{ArCr12}, and gives the connection
between the space
$\cin(\M)\otimes\Gamma(\underline{\Hom}(\E,\underline{F}))$ and the
homomorphisms from $\cin(\M)\otimes\Gamma(\E)$ to
$\cin(\M)\otimes\Gamma(\underline{F})$.
\begin{lemma}\label{wedge_product-operators_Correspondence_Lemma}
  There is a 1-1 correspondence between the degree $n$ elements of
  $\cin(\M)\otimes\Gamma(\underline{\Hom}(\E,\underline{F}))$ and the operators
  $\Psi\colon \cin(\M)\otimes\Gamma(\E)\to \cin(\M)\otimes\Gamma(\underline{F})$
  of degree $n$ which are $\cin(\M)$-linear in the graded sense:
	\[
	\Psi(\xi\wedge\eta)=(-1)^{nk}\xi\wedge \Psi(\eta),
	\]
	for all $\xi\in \cin(\M)^k$, and all $\eta\in \cin(\M)\otimes\Gamma(\E)$.
\end{lemma}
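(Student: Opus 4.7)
The plan is to mimic the proof of \cite[Lemma A.1]{ArCr12}, with care taken to track signs and to use the graded evaluation pairing rather than an ordinary one. Throughout the argument I will work $C^\infty(M)$-linearly and use that $\cin(\M)\otimes_{C^\infty(M)}\Gamma(\E)$ is a finitely generated projective $\cin(\M)$-module (with parity shifts), so locally a section is a $\cin(\M)$-linear combination of a local basis of $\Gamma(\E)$.

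The forward map is constructed via the wedge-with-evaluation product. Given a homogeneous $\Phi\in\cin(\M)^a\otimes\Gamma(\underline{\Hom}(\E,\underline{F})^b)$ of total degree $n=a-b$, set
\[
\Psi_\Phi(\eta):=\Phi\wedge_{\operatorname{ev}}\eta\in\cin(\M)\otimes\Gamma(\underline{F}),
\]
where $\operatorname{ev}\colon\underline{\Hom}(\E,\underline F)\otimes\E\to\underline F$ is the evaluation map. Its degree is $n$ by construction, and the graded $\cin(\M)$-linearity
\[
\Psi_\Phi(\xi\wedge\eta)=(-1)^{nk}\xi\wedge\Psi_\Phi(\eta),\qquad \xi\in\cin(\M)^k,
\]
follows from the defining sign rule for $\wedge_h$ applied to $\xi\otimes 1$ and the associativity of multiplication in $\cin(\M)$. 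This is a direct bookkeeping with the Koszul sign, analogous to the formula $(\xi\otimes e)\wedge_h(\zeta\otimes f)=(-1)^{(-j)k}\xi\zeta\otimes h(e,f)$ stated earlier.

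For the inverse map, pick (locally) a homogeneous basis $\{e_i\}$ of $\Gamma(\E)$ with dual basis $\{e^i\}$ of $\Gamma(\E^*)$, and to a graded $\cin(\M)$-linear operator $\Psi$ of degree $n$ assign
\[
\Phi_\Psi:=\sum_{i}(-1)^{n|e_i|}\Psi(1\otimes e_i)\otimes e^i\;\in\;\cin(\M)\otimes\Gamma(\underline{\Hom}(\E,\underline F)),
\]
where we implicitly identify $\Gamma(\underline{F})\otimes\Gamma(\E^*)$ with a subset of $\Gamma(\underline{\Hom}(\E,\underline F))$. The sign $(-1)^{n|e_i|}$ is chosen precisely so that $\Psi_{\Phi_\Psi}(1\otimes e_j)=\Psi(1\otimes e_j)$, after cancelling the sign produced by $\wedge_{\operatorname{ev}}$. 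One then extends by $\cin(\M)$-linearity using the formula in the statement, and checks that $\Phi_\Psi$ is independent of the chosen basis (by the usual change-of-basis computation) and hence globalises via a partition of unity to a well-defined global element of $\cin(\M)\otimes\Gamma(\underline{\Hom}(\E,\underline F))$.

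The two constructions are mutual inverses: evaluating $\Psi_\Phi$ on the local basis and reassembling recovers $\Phi$ (again up to the sign $(-1)^{n|e_i|}$, which is exactly what was put in), and conversely $\Psi_{\Phi_\Psi}$ agrees with $\Psi$ on all $1\otimes e_j$ and hence on every section by graded $\cin(\M)$-linearity. The main obstacle will be keeping the Koszul signs consistent when identifying $\Gamma(\underline F)\otimes\Gamma(\E^*)$ with $\Gamma(\underline{\Hom}(\E,\underline F))$ and when combining them with the sign produced by $\wedge_{\operatorname{ev}}$; once the correct conventions are fixed, the verification reduces to a routine calculation on homogeneous generators, and the passage from the local formula to a globally defined element is handled by the standard partition-of-unity argument, exploiting that both sides are sheaves of $\cin(\M)$-modules.
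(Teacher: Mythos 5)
Your proposal is correct and follows essentially the same route as the paper: the forward map is left wedge-multiplication via the evaluation pairing, and the inverse reconstructs the element of $\cin(\M)\otimes\Gamma(\underline{\Hom}(\E,\underline{F}))$ from the values of $\Psi$ on $\Gamma(\E)\subset\cin(\M)\otimes\Gamma(\E)$. The paper packages the converse slightly more directly---decomposing $\Psi(e)$ for $e\in\Gamma(E_k)$ according to the degree of its $\cin(\M)$-component and invoking $C^\infty(M)$-tensoriality to read off elements $\Psi_i\in\cin(\M)^i\otimes\Gamma(\underline{\Hom}^{n-i}(\E,\underline{F}))$---rather than through a local dual basis and a partition of unity, but the content is the same.
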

\begin{proof}
  The element $\Phi\in \cin(\M)\otimes\Gamma(\underline{\End}(\E))$
  induces the operator $\widehat{\Phi}$ given by left multiplication
  by $\Phi$:
	\[
	\widehat{\Phi}(\eta)=\Phi\wedge\eta.
	\]
	This clearly satisfies
        $\widehat{\Phi}(\xi\wedge\eta)=(-1)^{nk}\xi\wedge\widehat{\Phi}(\eta)$,
        for all
        $\xi\in \cin(\M)^k,\ \eta\in
        \cin(\M)\otimes\Gamma(\E)$.  Conversely, an operator $\Psi$ of degree $n$
        must send a section $e\in\Gamma(E_k)$ into the sum
	\[
          \Gamma(F_{k-n}) \oplus \left(
            \cin(\M)^1\otimes\Gamma(F_{k-n+1}) \right) \oplus
          \left( \cin(\M)^2\otimes\Gamma(F_{k-n+2}) \right)
          \oplus\dots,
	\]
	defining the elements
	\[
	\Psi_i\in C^\infty(\M)^i\otimes\Gamma(\underline{\Hom}^{n-i}(\E,\underline{F})).
	\]
	Thus, this yields the (finite) sum
        $\widetilde{\Psi} = \sum \Psi_i\in
        \Big(\cin(\M)\otimes\Gamma(\underline{\Hom}(\E,\underline{F}))\Big)^n$.
        Clearly,
	\[
	\widetilde{\widehat{\Phi}} = \Phi\ \text{and}\ \widehat{\widetilde{\Psi}} = \Psi.\qedhere
	\]
\end{proof}
Schematically, for a Lie $n$-algebroid $\M$, the above lemma gives the following diagram:
\[
\Big(\cin(\M)\otimes\Gamma(\underline{\Hom}(\E,\underline{F}))\Big)^n\xleftrightarrow{\text{1-1}}
\left\{\begin{array}{c}
\text{Degree}\ n\ \text{operators}\ \Psi \\
\cin(\M)\otimes\Gamma(\E)\to \cin(\M)\otimes\Gamma(\underline{F}) \\
\text{which are}\ \cin(\M)\text{-linear in the graded sense}
\end{array}\right\}
\]
In particular, if $\E = \underline{F}$, then
\[
\Big(\cin(\M)\otimes\Gamma(\underline{\End}(\E))\Big)^n\xleftrightarrow{\text{1-1}}
\left\{\begin{array}{c}
\text{Degree}\ n\ \text{operators}\ \Psi\ \text{on}\ \cin(\M)\otimes\Gamma(\E)\ \text{which}\\
\text{are}\ \cin(\M)\text{-linear in the graded sense}
\end{array}\right\}
\]

\subsection{The Weil algebra associated to a Lie $n$-algebroid}

Let $\M$ be an $[n]$-manifold over a smooth manifold $M$ and
$\xi_1^1,\ldots,\xi_1^{r_1},\xi_2^1,\ldots,\xi_2^{r_2},\ldots,\xi_n^1,\ldots,\xi_n^{r_n}$
be its local generators over some open $U\subset M$ with degrees
$1,2,\ldots,n$, respectively. By definition, the tangent bundle $T\M$
of $\mathcal M$ is an $[n]$-manifold over $TM$ \cite{Mehta06,Mehta09},
whose local generators over $TU\subset TM$ are given by
\[
  \cin_{TU}(T\M)^0 = C^\infty(TU)\ \text{and}\ \cin_{TU}(T\M)^i =
  \xi_i^1,\ldots,\xi_i^{r_i},\diff\xi_i^1,\ldots,\diff\xi_i^{r_i}.
\]
The shifted tangent prolongation\footnote{Note that here there is a
  sign difference in the notation with \cite{Mehta06} and
  \cite{Mehta09}. $T[1]\M$ here is the same as $T[-1]\M$ in these papers.} $T[1]\M$
is an $[n+1]$-manifold over $M$, with local generators over $U$ given
by
\begin{center}
	\begin{tabular}{ |c|c| } 
		\hline
		degree 0 & $C^\infty(U)$ \\ 
		degree 1 & $\xi_1^1,\ldots,\xi_1^{r_1}, \Omega^1(U)$ \\ 
		degree 2 & $\xi_2^1,\ldots,\xi_2^{r_2}, \diff\xi_1^1,\ldots,\diff\xi_1^{r_1}$ \\ 
		\vdots & \vdots \\
		degree $n$ & $\xi_n^1,\ldots,\xi_n^{r_n}, \diff\xi_{n-1}^1,\ldots,\diff\xi_{n-1}^{r_{{n-1}}}$ \\
		degree $n+1$ & $\diff\xi_n^1,\ldots,\diff\xi_n^{r_n}$ \\
		\hline
	\end{tabular}
\end{center}
It carries a bigrading $(p,q)$, where $p$ comes from the grading of
$\M$ and $q$ is the grading of ``differential forms''. In other words,
the structure sheaf of $T[1]\M$ assigns to every coordinate domain
$(U,x^1,\ldots,x^m)$ of $M$ that trivialises $\mathcal M$, the space
\[
  \cin_U(T[1]\M) =
  \bigoplus_i\underset{\text{($i$,0)}}{\underbrace{\cin_U(\M)^i}}\left<
    \underset{\text{(0,1)}}{\underbrace{(\diff x^k)_{k=1}^m}},
    \underset{\text{(1,1)}}{\underbrace{(\diff\xi_1^k)_{k=1}^{r_1}}},\ldots,
    \underset{\text{($n$,1)}}{\underbrace{(\diff\xi_n^k)_{k=1}^{r_n}}}
  \right>.
\]

Suppose now that $(\M,\Q)$ is a Lie $n$-algebroid over $M$. Then $T[1]\M$ is an $[n+1]$-manifold,
which inherits the two commuting differentials $\ldr{\Q}$ and $\dr$ defined as follows:
\begin{itemize}
	\item $\dr\colon\cin(T[1]\M)^\bullet\to\cin(T[1]\M)^{\bullet+1}$ is defined on generators by $C^\infty(M)\ni f  \mapsto \diff f$,
	$\xi_i^j  \mapsto \diff \xi_i^j$, $\diff f\mapsto 0$ and 
	$\diff \xi_i^j  \mapsto 0$,
	and is extended to the whole algebra as a derivation of bidegree $(0,1)$.

      \item
        $\ldr{\Q}\colon\cin(T[1]\M)^\bullet\to\cin(T[1]\M)^{\bullet+1}$
        is the \textit{Lie derivative} with respect to the vector
        field $\Q$, i.e.~the graded commutator
        $\ldr{\Q} = [i_{\Q},\dr] = i_\Q\circ\dr - \dr\circ i_Q$, and
        it is a derivation of bidegree $(1,0)$. Here, $i_{\Q}$ is the bidegree
            $(1,-1)$-derivation on $T[1]\M$, which sends
            $\xi\in \cin(\M)$ to $0$, $\dr \xi$ to $\Q(\xi)$ for
            $\xi\in \cin(\M)$, and is extended to the whole algebra as
            a derivation of bidegree $(1,-1)$.
            
\end{itemize}
By checking their values on local generators, it is easy to see that
$\ldr{\Q}^2 = 0, \dr^2 = 0$ and
$[\ldr{\Q},\dr] = \ldr{\Q}\circ\dr + \dr\circ\ldr{\Q} = 0$. Hence,
$W^{p,q}(\M):=\{\text{elements of}\ \cin(T[1]\M)\ \text{of bidegree}\
(p,q) \}$ together with $\ldr{\Q}$ and $\dr$ forms a double complex.

\begin{definition}
  The Weil algebra of a Lie $n$-algebroid $(\M,\Q)$ is the
  differential graded algebra given by the total complex of
  $W^{p,q}(\M)$:
	\[
	W(\M):=\left(\bigoplus_{i\in\mathbb{Z}}\bigoplus_{i=p+q} W^{p,q}(\M),\ldr{\Q} + \dr\right).
	\]
\end{definition}
In the case of a Lie 1-algebroid $A\to M$, this is the Weil algebra
from \cite{Mehta06,Mehta09}. For the 1-algebroid case, see also
\cite{ArCr12} for an approach without the language of supergeometry.

\section{Differential graded modules}\label{modules}
This section defines the notion of a differential graded module
over a Lie $n$-algebroid $(\M,\Q)$ and gives the two fundamental
examples of modules which come canonically with any Lie $n$-algebroid,
namely the adjoint and the coadjoint modules. Note that the case of
differential graded modules over a Lie 1-algebroid $A\to M$ is studied
in detail in \cite{Mehta14}.

\subsection{The category of differential graded modules}

Let $A\to M$ be a Lie 1-algebroid. A \emph{Lie algebroid module}
\cite{Vaintrob97} over $A$ is defined as a 
sheaf $\Em$ of locally freely generated graded
$\Omega(A)$-modules over $M$ together with a map $\D\colon \Em\to\Em$ which
squares to zero and satisfies the Leibniz rule
\[
\D(\alpha\eta) = (\diff_A\alpha)\eta + (-1)^{|\alpha|}\alpha\D(\eta),
\]
for $\alpha\in\Omega(A)$ and $\eta\in\Em$. For a Lie $n$-algebroid
$(\M,\Q)$ over $M$, this is generalised to the following definitions.

\begin{definition}
  \begin{enumerate}
  	\item A \emph{left differential graded module of $(\M,\Q)$} is a
  	sheaf $\Em$ of locally freely generated left graded $\cin(\M)$-modules
  	over $M$ together with a map $\D\colon\Em\to\Em$ of degree $1$, such
  	that $\D^2=0$ and
  	\[
  	\D(\xi\eta) = \Q(\xi)\eta + (-1)^{|\xi|}\xi\D(\eta)
  	\]
  	for all $\xi\in\cin(\M)$ and $\eta\in\Em(\M)$.
      \item A \emph{right differential graded module of $(\M,\Q)$} is
        a sheaf $\Em$ of right graded modules as above together with a map $\D\colon\Em\to\Em$ of
        degree $1$, such that $\D^2=0$ and
  	\[
  	\D(\eta\xi) = \D(\eta)\xi + (-1)^{|\eta|}\eta\Q(\xi)
  	\]
  	for all $\xi\in\cin(\M)$ and $\eta\in\Em(\M)$.
      \item A \emph{differential graded bimodule of $(\M,\Q)$} is a
        sheaf $\Em$ as above together with left and right differential
        graded module structures such that the gradings and the
        differentials coincide, and the two module structures commute:
        $(\xi_1\eta)\xi_2 = \xi_1(\eta\xi_2)$ for all
        $\xi_1,\xi_2\in\cin(\M)$ and $\eta\in\Em$.
  \end{enumerate}
\end{definition}

For short we write \emph{left DG $(\M,\Q)$-module} and \emph{right DG
  $(\M,\Q)$-module}, or simply \emph{left DG $\M$-module} and
\emph{right DG $\M$-module}. The cohomology of the induced complexes
is denoted by $H_L^\bullet(\M,\Q;\Em)$ and $H_R^\bullet(\M,\Q;\Em)$,
respectively, or simply by $H_L^\bullet(\M,\Em)$ and
$H_R^\bullet(\M,\Em)$. If there is no danger of confusion, the
prefixes ``left" and ``right", as well as the subscripts ``$L$" and
``$R$", will be omitted.

\begin{definition}
  Let $(\Em_1,\D_1)$ and $(\Em_2,\D_2)$ be two left (right) differential graded
  modules over the Lie $n$-algebroids $(\M,\Q_{\M})$ and
  $(\N,\Q_{\N})$, respectively, and let $k\in\mathbb{Z}$. A degree $0$
  morphism, or simply a morphism, between $\Em_1$ and $\Em_2$ consists of a morphism of
  Lie $n$-algebroids $\phi\colon \N\to\M$ and a degree preserving map
  $\mu\colon \Em_1\to\Em_2$ which is left (right) linear:
  $\mu(\xi\eta) = \phi^\star(\xi) \mu(\eta)$, for all $\xi\in\cin(\M)$
  and $\eta\in\Em(\M)$, and commutes with the differentials $\D_1$ and
  $\D_2$.
\end{definition}

\begin{remark}
  The sheaves $\Em_1$ and $\Em_2$ in the definition above are
  equivalent to sheaves of linear functions on $\Q$-vector bundles
  over $\M$ \cite{Mehta06}.  From this point of view, it is natural
  that the definition of a morphism of differential graded modules has
  a contravariant nature.
\end{remark}

As in the case of Lie algebroids, new examples of DG $\M$-modules of
Lie $n$-algebroids are obtained by considering the usual algebraic
constructions. In the following, we describe these constructions only
for left DG modules but the case of right DG modules is treated
similarly.

\begin{definition}[Dual module]
  Given a DG $\M$-module $\Em$ with differential $\D_{\Em}$,
  one defines a DG $\M$-module structure on the dual sheaf
  $\Em^*:=\underline{\Hom}(\Em,\cin)$ with differential $\D_{\Em^*}$
  defined via the property
	\[
          \Q(\langle\psi,\eta\rangle) =
          \langle\D_{\Em^*}(\psi),\eta\rangle +
          (-1)^{|\psi|}\langle\psi,\D_{\Em}(\eta)\rangle,
	\] 
	for all $\psi\in\Em^*(\M)$ and $\eta\in\Em(\M)$, where
        $\langle\cdot\,,\cdot\rangle$ is the pairing of $\Em^*$ and
        $\Em$ \cite{Mehta06}.
\end{definition}

\begin{definition}[Tensor product]
  For DG $\M$-modules $\Em$ and $\Fm$ with operators $\D_{\Em}$ and
  $\D_{\Fm}$, the corresponding operator $\D_{\Em\otimes \Fm}$ on
  $\Em\otimes \Fm$ is uniquely characterised by the formula
	\[
          \D_{\Em\otimes \Fm}(\eta\otimes\eta') =
          \D_{\Em}(\eta)\otimes\eta' +
          (-1)^{|\eta|}\eta\otimes\D_{\Fm}(\eta'),
	\]
	for all $\eta\in \Em(\M)$ and $\eta'\in \Fm(\M)$.
\end{definition}

\begin{definition}[$\underline{\Hom}$ module]
  For DG $\M$-modules $\Em,\Fm$ with operators $\D_{\Em}$
  and $\D_{\Fm}$, the differential $\D_{\underline{\Hom}(\Em,\Fm)}$ on
  $\underline{\Hom}(\Em,\Fm)$ is defined via
	\[
	\D_{\Fm}(\psi(\eta)) = \D_{\underline{\Hom}(\Em,\Fm)}(\psi)(\eta) + (-1)^{|\psi|}\psi(\D_{\Em}(\eta)),
	\]
	for all $\psi\in
	\underline{\Hom}(\Em(\M),\Fm(\M))$ and
	$\eta\in \Em(\M)$.
\end{definition}

\begin{definition}[(Anti)symmetric powers]
	For a DG $\M$-module $\Em$ with operator $\D_{\Em}$, the corresponding operator
	$\D_{\underline{S} (\Em)}$ on ${\underline{S}^k(\Em)}$ is uniquely
	characterised by the formula
	\begin{align*}
	\D_{\underline{S} (\Em)}(\eta_1\eta_2\ldots\eta_k) = &\ \D_{\Em}(\eta_1)\eta_2\ldots\eta_k \\
	& + \eta_1\sum_{i=2}^k(-1)^{|\eta_1|+\ldots+|\eta_{i-1}|}\eta_2\ldots\D(\eta_i)\ldots\eta_k,
	\end{align*}
	for all $\eta_1,\ldots,\eta_k\in \Em(\M)$. A similar formula
         gives also the characterisation for the
        operator $\D_{\underline{A} (\Em)}$ of the antisymmetric
        powers $\underline{A}^q(\Em)$.
\end{definition}

\begin{definition}[Direct sum]
  For DG $\M$-modules $\Em,\Fm$ with operators $\D_{\Em}$
  and $\D_{\Fm}$, the differential operator $\D_{\Em\oplus \Fm}$ on
  $\Em\oplus \Fm$ is defined as
	\[
	\D_{\Em\oplus \Fm} = \D_{\Em} \oplus \D_{\Fm}.
	\]
\end{definition}

\begin{definition}[Shifts]
  For $k\in\mathbb{Z}$, the DG $\M$-module $\mathbb{R}[k]$ is defined
  as $\cin(\M)\otimes\Gamma(M\times\mathbb{R}[k])$ with differential
  given by $\Q$; here, $M\times \mathbb{R}[k]$ is the $[k]$-shift of
  the trivial line bundle over $M$, i.e. $M\times \mathbb{R}$ in
  degree $-k$ and zero otherwise.  Given now a module $\Em$ with
  differential $\D_{\Em}$, we define the shifted module
  $\Em[k]:=\Em\otimes\mathbb{R}[k]$. Due to the definition of the
  tensor module, its differential $\D[k]$ acts via
	\[
	\D_{\Em}[k](\eta\otimes1) = \D_{\Em}(\eta)\otimes1
	\]
	for all $\eta\in\Em(\M)$. Abbreviating the element
        $\eta\otimes1$ simply as $\eta$, the shifted differential
        $\D_{\Em}[k]$ coincides\footnote{Again one could choose to
          tensor with $\mathbb{R}[k]$ from the left. Then on elements
          of the form $1\otimes\eta$, the resulting differential would
          act as $\D_{\Em}[k]=(-1)^k\D_{\Em}$.} with $\D_{\Em}$.
\end{definition}

\begin{definition}
  Let $(\M,\Q_{\M})$ and $(\N,\Q_{\N})$ be Lie $n$-algebroids, and
  suppose that $\Em_1$ and $\Em_2$ are left (right) DG-modules over $\M$ and $\N$,
  respectively. A degree $k$-morphism, for $k\in\mathbb{Z}$, from
  $\Em_1$ to $\Em_2$ is defined as a left (right) degree $0$ morphism
  $\mu:\Em_1\to\Em_2[k]$; that is, a map sending elements of degree
  $i$ in $\Em_1$ to elements of degree $i+k$ in $\Em_2$, such that it
  is linear over a Lie $n$-algebroid morphism $\phi:\N\to\M$ and
  commutes with the differentials. A $k$-isomorphism is a $k$-morphism
  with an inverse.
\end{definition}

\begin{remark}
	\begin{enumerate}
		\item The inverse of a $k$-isomorphism is necessarily a $-k$-morphism.
		\item For all $k\in\mathbb{Z}$ and all DG $\M$-modules
		$\Em$, there is an obvious $k$-isomorphism
		$\Em\to \Em[k]$ over the identity on $\M$.
	\end{enumerate}
\end{remark}

Considering the special case of $\M = \N$ in the definition above
yields $k$-morphisms between DG $\M$-modules over the same Lie
$n$-algebroid. The resulting graded categories of left and right DG
$\M$-modules are denoted by $\underline{\mathbb{M}\text{od}}_L(\M,\Q)$
and $\underline{\mathbb{M}\text{od}}_R(\M,\Q)$, or simply by
$\underline{\mathbb{M}\text{od}}_L(\M)$ and
$\underline{\mathbb{M}\text{od}}_R(\M)$. The isomorphism classes of
these categories are denoted by $\underline{\text{Mod}}_L(\M,\Q)$ and
$\underline{\text{Mod}}_R(\M,\Q)$, or simply by
$\underline{\text{Mod}}_L(\M)$ and
$\underline{\text{Mod}}_R(\M)$. Again if there is no danger of
confusion, the subscripts ``$L$" and ``$R$" will be omitted.

\subsection{Adjoint and coadjoint modules}
Recall that every $[n]$-manifold $\M$ comes with the sheaf of graded
derivations $\underline{\Der}(\cin(\M))$ of $\cin(\M)$, which is
called \emph{the sheaf of vector fields over $\M$}. It is a natural
sheaf of locally freely generated graded $\cin(\M)$-modules over the
smooth manifold $M$, with left $\cin(\M)$-module structure defined by
the property $(\xi_1\mathcal{X})(\xi_2) = \xi_1\mathcal{X}(\xi_2)$ for
all $\xi_1,\xi_2\in\cin(\M)$ and
$\mathcal{X}\in\underline{\Der}(\cin(\M))$. In addition to the left
modules structure, the space of vector fields are also endowed with a
right $\cin(\M)$-module structure. The right multiplication with
functions in $\cin(\M)$ is obtained by viewing the elements of
$\underline{\Der}(\cin(\M))$ of degree $i$ as functions of bidegree
$(i,1)$ of the graded manifold $T^*[1]\M$, similarly to the shifted
tangent bundle defined before. The two module structures on
$\underline{\Der}(\cin(\M))$ are related by
$\mathcal{X}\xi = (-1)^{|\xi|(|\mathcal{X}|+1)}\xi\mathcal{X}$, for
all homogeneous $\xi\in\cin(\M)$ and
$\mathcal{X}\in\underline{\Der}(\cin(\M))$.

Suppose now that $\M$ is endowed with a homological vector field $\Q$,
i.e.~$(\M,\Q)$ is a Lie $n$-algebroid. Then the Lie derivative on the
space of vector fields $\ldr{\Q}:=[\Q,\cdot]$ is a degree 1 operator
which squares to zero and has both the left and right Leibniz
identities with respect to the left and right module structures
explained above. That is, the sheaf of vector fields over $(\M,\Q)$
has a canonical DG $\M$-bimodule structure. It is called the
\textit{adjoint module} of $\M$ and denoted by
\[
(\mathfrak{X}(\M),\ldr{\Q}).
\]

The dual module $\bigoplus_p\cin(T[1]\M)_{(p,1)}$ of 1-forms over $\M$
carries the grading obtained from the horizontal grading of the Weil
algebra -- that is, the elements of $\cin(T[1]\M)_{(p,1)}$ have degree
$p$. Its structure operator as a left DG module is given by the Lie
derivative $\ldr{\Q} = [i_\Q,\dr]$, and as a right DG module is given
by $-\ldr{\Q}$.  These left and right DG $\M$-modules are called the
\textit{coadjoint module} of $(\M,\Q)$ and denoted by
\[
  (\Omega^1(\M),\ldr{\Q})\qquad \text{and} \qquad (\Omega^1(\M),-\ldr{\Q})
\]

\subsection{Poisson Lie $n$-algebroids: coadjoint vs adjoint modules}

This section shows that a compatible pair of a homological vector
field and a Poisson bracket on an $[n]$-manifold gives rise to a
degree $-n$ map from the coadjoint to the
adjoint module which is an anti-morphism of left DG $\M$-modules and a morphism of right DG $\M$-modules.

Let $k\in\mathbb{Z}$. A degree $k$ Poisson bracket on an
$[n]$-manifold $\M$ is a degree $k$ $\mathbb{R}$-bilinear map
$\{\cdot\,,\cdot\}\colon \cin(\M)\times \cin(\M)\to \cin(\M)$,
i.e.~$|\{\xi_1,\xi_2\}| = |\xi_1| + |\xi_2| + k$, such that
$\{ \xi_1,\xi_2 \} = -(-1)^{(|\xi_1|+k)(|\xi_2|+k)}\{ \xi_2,\xi_1 \}$ and it satisfies the graded Leibniz and Jacobi identities
\[
\{\xi_1,\xi_2\xi_3\} = \{\xi_1,\xi_2\}\xi_3 + (-1)^{(|\xi_1|+k)|\xi_2|}\xi_2\{\xi_1,\xi_3\}
\] 
\[
  \{\xi_1,\{\xi_2,\xi_3\}\} = \{\{\xi_1,\xi_2\},\xi_3\} +
  (-1)^{(|\xi_1|+k)(|\xi_2|+k)}\{\xi_2,\{\xi_1,\xi_3\}\},
\]
for homogeneous elements $\xi_1,\xi_2,\xi_3\in \cin(\M)$. A morphism between two Poisson
$[n]$-manifolds $(\N,\{\cdot\,,\cdot\}_\N)$ and $(\M,\{\cdot\,,\cdot\}_\M)$ is a morphism
of $[n]$-manifolds $\mathcal{F}\colon\N\to\M$ which respects the Poisson
brackets:
$\mathcal{F}^\star\{\xi_1,\xi_2\}_\M =
\{\mathcal{F}^\ast\xi_1,\mathcal{F}^\ast\xi_2\}_\N$ for all
$\xi_1,\xi_2\in \cin(\M)$.

As is the case for ordinary Poisson
manifolds, a degree $k$ Poisson bracket on $\M$ induces a degree $k$ map
\[
\mathrm{Ham} \colon \cin(\M)\to \underline{\Der}(\cin(\M))
\]
which sends $\xi$ to its \textit{Hamiltonian vector field}
$\mathcal{X}_\xi=\{\xi\,,\cdot\}$. An $[n]$-manifold is called
\textit{symplectic} if it is equipped with a degree $k$ Poisson
bracket whose Hamiltonian vector fields generate all of
$\underline{\Der}(\cin(\M))$.

If an $[n]$-manifold $\M$ carries both a homological vector field
$\Q$ and a degree $k$ Poisson bracket $\{\cdot\,,\cdot\}$, then the
two structures are compatible if
\[
\Q\{\xi_1,\xi_2\} = \{\Q(\xi_1),\xi_2\} + (-1)^{|\xi_1|+k}\{\xi_1,\Q(\xi_2)\}
\]
  for homogeneous $\xi_1\in\cin(\M)$ and all
$\xi_2\in \cin(\M)$. Using the Hamiltonian map defined above, the
compatibility of $\Q$ and $\{\cdot\,,\cdot\}$ can be rewritten as
$\mathcal{X}_{\Q(\xi)}=[\Q,\mathcal{X}_{\xi}]$ for all
$\xi\in\cin(\M)$.

\begin{definition}
  A \emph{Poisson Lie $n$-algebroid} $(\M,\Q,\{\cdot\,,\cdot\})$ is an
  $[n]$-manifold $\M$ endowed with a compatible pair of a homological
  vector field $\Q$ and a degree $-n$ Poisson bracket
  $\{\cdot\,,\cdot\}$. If in addition the Poisson bracket is
  symplectic, then it is called a \emph{symplectic Lie
    $n$-algebroid}. A morphism of Poisson Lie $n$-algebroids is a
  morphism of the underlying $[n]$-manifolds which is also a morphism
  of Lie $n$-algebroids and a morphism of Poisson $[n]$-manifolds.
\end{definition}

A Poisson (symplectic) Lie 0-algebroid is a usual Poisson (symplectic)
manifold $M$. A Poisson Lie 1-algebroid corresponds to a Lie bialgebroid $(A,A^*)$
and a symplectic Lie 1-algebroid is again a usual Poisson manifold --
Section \ref{applications} explains this in detail. A result due to
\v{S}evera \cite{Severa05} and Roytenberg \cite{Roytenberg02} shows
that symplectic Lie 2-algebroids are in one-to-one correspondence with
Courant algebroids.

In \cite{MaXu94}, it was shown that a Lie algebroid $A$ with a linear
Poisson structure satisfies the Lie bialgebroid compatibility
condition if and only if the map $T^*A \to TA$ induced by the Poisson
bivector is a Lie algebroid morphism from $T^*A = T^*A^* \to A^*$ to
$TA \to TM$. This is now generalized to give a
characterisation of Poisson Lie $n$-algebroids.

Let $\M$ be an $[n]$-manifold equipped with a homological vector field
$\Q$ and a degree $-n$ Poisson bracket $\{\cdot\,,\cdot\}$. The Poisson
bracket on $\M$ induces a map
$\sharp\colon\Omega^1(\M)\to\mathfrak{X}(\M)[-n]$ defined one the generators via the
property
	\begin{equation}\label{eqn:sharp}
	\left( \sharp(\diff\xi_1) \right)\xi_2 = \{ \xi_1,\xi_2 \},
	\end{equation}
for all $\xi_1,\xi_2\in\cin(\M)$, and extended odd linearly by the rules
\[
\sharp(\xi_1\diff \xi_2) = (-1)^{|\xi_1|}\xi_1\sharp(\diff\xi_2)
\qquad \text{and} \qquad
\sharp(\diff\xi_1 \xi_2) = (-1)^{|\xi_2|}\sharp(\diff\xi_1) \xi_2.
\]

\begin{theorem}\label{thm_poisson}
  Let $\M$ be an $[n]$-manifold equipped with a homological vector
  field $\Q$ and a degree $-n$ Poisson bracket $\{\cdot\,,\cdot\}$. Then
  $(\M,\Q,\{\cdot\,,\cdot\})$ is a Poisson Lie $n$-algebroid if and only
  if $\sharp\colon \Omega^1(\M)\to\mathfrak{X}(\M)$ is a degree $-n$
  anti-morphism\footnote{Using convention of tensoring from the left to obtain the shift, the anti-morphism condition reads $\sharp\circ\ldr{\Q}=-\ldr{\Q}\circ\sharp=-(-1)^n\ldr{\Q}[n]\circ\sharp=-(-1)^{n(-n)}\ldr{\Q}[n]\circ\sharp=-(-1)^{n|\sharp|}\ldr{\Q}[n]\circ\sharp$.} of left DG $\M$-modules, or if and only if it is a morphism of right DG $\M$-modules,
  i.e.~$\sharp\circ\ldr{\Q}=-\ldr{\Q}\circ\sharp$.
\end{theorem}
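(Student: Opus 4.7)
The plan is to reduce the asserted identity $\sharp\circ\ldr{\Q}=-\ldr{\Q}\circ\sharp$ on $\Omega^1(\M)$ to the case of exact 1-forms $\diff\xi_1$, and then to verify that, evaluated against an arbitrary $\xi_2\in\cin(\M)$, it is literally the compatibility condition $\Q\{\xi_1,\xi_2\}=\{\Q\xi_1,\xi_2\}+(-1)^{|\xi_1|-n}\{\xi_1,\Q\xi_2\}$ of the homological vector field with the Poisson bracket. For the reduction step, I use that $\Omega^1(\M)=\bigoplus_p\cin(T[1]\M)_{(p,1)}$ is locally generated as a left $\cin(\M)$-module by exact 1-forms $\diff\xi_1$, and that $\sharp$ is defined on these generators and then extended by the odd-linearity rule $\sharp(\xi\diff\eta)=(-1)^{|\xi|}\xi\sharp(\diff\eta)$. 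A short graded-Leibniz computation, using $\ldr{\Q}(\xi\omega)=\Q(\xi)\omega+(-1)^{|\xi|}\xi\ldr{\Q}(\omega)$ on both $\Omega^1(\M)$ and $\mathfrak{X}(\M)$, shows that both $\sharp\circ\ldr{\Q}$ and $-\ldr{\Q}\circ\sharp$ transform identically under left multiplication by $\xi\in\cin(\M)$, so if they agree on every $\diff\xi_1$ they agree on all of $\Omega^1(\M)$.

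On generators, the Weil-algebra relation $\ldr{\Q}\circ\dr+\dr\circ\ldr{\Q}=0$ from Section 3.3, together with $\ldr{\Q}(\xi_1)=\Q(\xi_1)$, gives $\ldr{\Q}(\diff\xi_1)=-\diff(\Q\xi_1)$. Applying $\sharp$ and evaluating at $\xi_2\in\cin(\M)$ therefore produces
\[
\sharp(\ldr{\Q}(\diff\xi_1))(\xi_2)=-\{\Q\xi_1,\xi_2\}.
\]
On the other hand, $\sharp(\diff\xi_1)$ is the Hamiltonian vector field $\xi\mapsto\{\xi_1,\xi\}$ of degree $|\xi_1|-n$, so the graded commutator definition of $\ldr{\Q}$ on vector fields yields
\[
\ldr{\Q}(\sharp(\diff\xi_1))(\xi_2)=[\Q,\{\xi_1,\cdot\}](\xi_2)=\Q\{\xi_1,\xi_2\}-(-1)^{|\xi_1|-n}\{\xi_1,\Q\xi_2\}.
\]
Equating $\sharp(\ldr{\Q}(\diff\xi_1))$ with $-\ldr{\Q}(\sharp(\diff\xi_1))$ and rearranging is exactly the compatibility condition $\Q\{\xi_1,\xi_2\}=\{\Q\xi_1,\xi_2\}+(-1)^{|\xi_1|-n}\{\xi_1,\Q\xi_2\}$, so the two conditions are equivalent in both directions. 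The right DG-module version requires no additional work: the right coadjoint module carries $-\ldr{\Q}$ while the right adjoint module carries $+\ldr{\Q}$, so the right-module morphism condition $\sharp\circ(-\ldr{\Q})=\ldr{\Q}\circ\sharp$ is the very same identity established above.

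The main technical obstacle is sign bookkeeping. The minus sign in $\ldr{\Q}\circ\dr=-\dr\circ\ldr{\Q}$ arises from the bigrading $|\ldr{\Q}|=(1,0)$, $|\dr|=(0,1)$ (both of odd total degree), and the sign $(-1)^{|\xi_1|-n}$ appears in the graded commutator of $\Q$ with a vector field of degree $|\xi_1|-n$. Once these signs and the odd-linearity extension of $\sharp$ are tracked consistently, the equivalence reduces to a line-by-line comparison of two expressions in $\xi_1,\xi_2,\Q\xi_1,\Q\xi_2$ and their brackets, giving the theorem.
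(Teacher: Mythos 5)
Your proposal is correct and follows essentially the same route as the paper: the paper's entire proof is the single identity $\bigl(\ldr{\Q}(\sharp(\diff\xi_1))+\sharp(\ldr{\Q}(\diff\xi_1))\bigr)\xi_2=\Q\{\xi_1,\xi_2\}-(-1)^{|\xi_1|-n}\{\xi_1,\Q(\xi_2)\}-\{\Q(\xi_1),\xi_2\}$, which is exactly your computation on exact generators. Your additional verifications — that $\ldr{\Q}\circ\dr=-\dr\circ\ldr{\Q}$ gives $\ldr{\Q}(\diff\xi_1)=-\diff(\Q\xi_1)$, that the anticommutator $\sharp\circ\ldr{\Q}+\ldr{\Q}\circ\sharp$ is $\cin(\M)$-linear so it suffices to check it on generators, and the translation to the right-module statement — are left implicit in the paper and are correctly supplied.
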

\begin{proof}
	 From \eqref{eqn:sharp}, 
	\[
          \Big( \ldr{\Q}(\sharp(\diff\xi_1)) +
          \sharp(\ldr{\Q}(\diff\xi_1)) \Big)\xi_2 = \Q\{\xi_1,\xi_2\}
          - (-1)^{|\xi_1|-n}\{\xi_1,\Q(\xi_2)\} - \{\Q(\xi_1),\xi_2\}.
	\]
    In other words, the compatibility of $\Q$ with
    $\{\cdot\,,\cdot\}$ is equivalent to
    $\ldr{\Q}\circ\sharp = - \sharp\circ\ldr{\Q}$.
\end{proof} 
A detailed analysis of this map in the cases of Poisson Lie algebroids
of degree $n\leq2$ is given in Section \ref{morphism_of_ad*_ad_Poisson012}. The two following corollaries can be
realised as obstructions for a Lie $n$-algebroid with a Poisson
bracket to be symplectic. In particular, for $n=2$ one obtains the
corresponding results for Courant algebroids.

\begin{corollary}\label{cor_poisson}
  Let $\M$ be an $[n]$-manifold equipped with a homological vector
  field $\Q$ and a degree $-n$ Poisson bracket $\{\cdot\,,\cdot\}$. Then
  $(\M,\Q,\{\cdot\,,\cdot\})$ is symplectic if and only if $\sharp$ is
  an anti-isomorphism of left DG $\M$-modules, or if and only if it is an isomorphism of right DG $\M$-modules.
\end{corollary}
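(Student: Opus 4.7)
The plan is to reduce the corollary to Theorem \ref{thm_poisson} and then to a clean statement about when $\sharp$ is bijective.

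\textbf{Reduction step.} By Theorem \ref{thm_poisson}, the compatibility condition that makes $(\M,\Q,\{\cdot\,,\cdot\})$ a Poisson Lie $n$-algebroid is \emph{exactly} equivalent to $\sharp$ being an anti-morphism of left DG $\M$-modules (equivalently, a morphism of right DG $\M$-modules). So the only new content in the corollary is the bijectivity assertion: once we know $\sharp$ is such an (anti-)morphism, we must show that $\sharp$ is an isomorphism if and only if the Poisson bracket is symplectic in the sense of the definition given earlier, namely that Hamiltonian vector fields generate $\underline{\Der}(\cin(\M))$ as a $\cin(\M)$-module.

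\textbf{Surjectivity.} The first observation is that the image of $\sharp$ is precisely the $\cin(\M)$-submodule of $\mathfrak{X}(\M)$ generated by the Hamiltonian vector fields. Indeed, $\sharp(\diff\xi)=\mathcal{X}_\xi$ by definition, and the extension rule $\sharp(\xi_1\diff\xi_2)=(-1)^{|\xi_1|}\xi_1\sharp(\diff\xi_2)$ combined with the fact that $\Omega^1(\M)$ is locally freely generated by the exact differentials $\diff x^k,\diff\xi_i^j$ of the coordinates on $\M$ (these are exact by definition of $\dr$ in the Weil algebra) shows that every one-form is a $\cin(\M)$-linear combination of exact differentials. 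Hence $\sharp$ is surjective if and only if $\{\mathcal{X}_\xi\mid\xi\in\cin(\M)\}$ generates $\mathfrak{X}(\M)$ as a $\cin(\M)$-module, which is the symplecticity condition.

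\textbf{Injectivity from surjectivity.} Both $\Omega^1(\M)$ and $\mathfrak{X}(\M)[-n]$ are locally freely generated graded $\cin(\M)$-modules with generators matching in each degree: $\diff x^k$ pairs with $\partial_{x^k}$ and $\diff\xi_i^j$ pairs with $\partial_{\xi_i^j}$, and the shift $[-n]$ aligns the degrees so that both modules have the same graded rank. Any $\cin(\M)$-linear surjection between two locally free graded $\cin(\M)$-modules of equal graded rank is automatically an isomorphism: reducing modulo the maximal ideal at a point $m\in M$ (together with the ideal of positively graded generators) yields a graded-linear surjection between finite-dimensional graded vector spaces of equal dimension, hence a bijection; a graded Nakayama-type argument then promotes this to an isomorphism of $\cin(\M)$-modules in a neighbourhood, and locality glues these to a global isomorphism. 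Combining this with the preceding step, $\sharp$ is bijective iff it is surjective iff $\{\cdot\,,\cdot\}$ is symplectic.

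\textbf{Expected obstacle.} The only mildly delicate point is the rank-count/Nakayama step in the graded setting; one has to be careful that the degree shift $[-n]$ is the right one so that the generators of $\Omega^1(\M)$ and $\mathfrak{X}(\M)[-n]$ sit in matching degrees, and that the left/right module structures on $\mathfrak{X}(\M)$ (which differ by the sign $(-1)^{|\xi|(|\mathcal{X}|+1)}$ recorded earlier) are compatibly transported across the isomorphism --- this is why the same map $\sharp$ is an anti-isomorphism on the left and an honest isomorphism on the right. All other parts of the argument are essentially formal consequences of the definitions and of Theorem \ref{thm_poisson}.
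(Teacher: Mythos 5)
The paper gives no proof of this corollary; it is stated as an immediate consequence of Theorem \ref{thm_poisson} together with the definition of a symplectic bracket (the Hamiltonian vector fields generate $\underline{\Der}(\cin(\M))$ as a $\cin(\M)$-module, which is precisely surjectivity of $\sharp$, since $\Omega^1(\M)$ is locally freely generated by the exact differentials $\diff x^k,\diff\xi_i^j$ and $\sharp$ sends these to Hamiltonian vector fields). Your reduction step and your identification of the image of $\sharp$ are exactly this intended argument, and your observation that surjectivity upgrades automatically to bijectivity is a genuinely useful detail that the paper leaves implicit.

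One inaccuracy should be fixed in the bijectivity step. After the shift, the degree-matching of generators is \emph{reversed}: $\diff x^k$ (degree $0$ in $\Omega^1(\M)$) must pair with the degree-$0$ generators $\partial_{\xi_n^j}$ of $\mathfrak{X}(\M)[-n]$, not with $\partial_{x^k}$, which sits in degree $n$ there; similarly $\diff\xi_i^j$ pairs in degree with $\partial_{\xi_{n-i}^l}$. Consequently the two modules do \emph{not} have ``the same graded rank'' in general: degree-by-degree equality of ranks amounts to $r_i=r_{n-i}$ (with $r_0=m$), which is not automatic but is itself part of what nondegeneracy forces. This does not break the argument, because only equality of \emph{total} ranks is needed: a degree-preserving surjection between finite-dimensional graded vector spaces satisfies $\dim V^i\ge\dim W^i$ for every $i$, so equal total dimension forces equality in each degree and hence bijectivity; your Nakayama/splitting step then applies verbatim. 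With that correction your proof is complete and is, as far as I can see, the natural (and essentially only) way to establish the statement.
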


\begin{corollary}
  For any Poisson Lie $n$-algebroid $(\M,\Q,\{\cdot\,,\cdot\})$ there are
  natural degree $-n$ maps in cohomologies
  $\sharp\colon H^\bullet_L(\M,\Omega^1)\to
  H^{\bullet-n}_L(\M,\mathfrak{X})$ and $\sharp\colon H^\bullet_R(\M,\Omega^1)\to
  H^{\bullet-n}_R(\M,\mathfrak{X})$ which are isomorphisms if the
  bracket is symplectic.
\end{corollary}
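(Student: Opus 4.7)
The plan is straightforward: verify that the chain-level map $\sharp$ descends to cohomology in both the left and right settings, and then use Corollary \ref{cor_poisson} to produce an inverse at the level of cohomology in the symplectic case. The only subtlety is that, in the left setting, $\sharp$ only \emph{anti}-commutes with $\ldr{\Q}$, but this sign is harmless for the passage to cohomology.

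First I would treat the left case. Given a cocycle $\alpha \in \Omega^1(\M)$, i.e.~$\ldr{\Q}\alpha = 0$, Theorem \ref{thm_poisson} yields $\ldr{\Q}\sharp(\alpha) = -\sharp(\ldr{\Q}\alpha) = 0$, so $\sharp(\alpha)$ is a cocycle in $\mathfrak{X}(\M)$. Similarly, if $\alpha = \ldr{\Q}\beta$ is a coboundary, then $\sharp(\alpha) = \sharp(\ldr{\Q}\beta) = -\ldr{\Q}\sharp(\beta) = \ldr{\Q}(-\sharp(\beta))$ is exact. Hence $\sharp$ sends cocycles to cocycles and coboundaries to coboundaries, and therefore induces a well-defined degree $-n$ map $\sharp\colon H_L^\bullet(\M,\Omega^1) \to H_L^{\bullet-n}(\M,\mathfrak{X})$. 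The right case is handled identically, using that $\sharp$ commutes (with no sign) with the right differential $-\ldr{\Q}$.

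For the symplectic case, Corollary \ref{cor_poisson} gives a chain-level inverse $\sharp^{-1}\colon \mathfrak{X}(\M) \to \Omega^1(\M)$ of degree $n$. Applying $\sharp^{-1}$ on both sides of the identity $\sharp \circ \ldr{\Q} = -\ldr{\Q} \circ \sharp$ yields $\sharp^{-1} \circ \ldr{\Q} = -\ldr{\Q} \circ \sharp^{-1}$, so $\sharp^{-1}$ also anti-commutes with $\ldr{\Q}$ and thus descends to a degree $n$ map on cohomology by the same argument as above. The compositions $\sharp \circ \sharp^{-1}$ and $\sharp^{-1} \circ \sharp$ are the identity at the chain level, hence also on cohomology, proving that the induced maps are mutually inverse isomorphisms. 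The right case is again analogous.

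No serious obstacle is expected; the proof is a direct consequence of Theorem \ref{thm_poisson} and Corollary \ref{cor_poisson} together with the elementary observation that signs in the chain map equation do not affect the passage to cohomology. The only small care needed is to track that inverting a graded (anti)morphism of chain complexes preserves the (anti)commutation relation with the differential.
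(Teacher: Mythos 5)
Your proof is correct and is exactly the argument the paper leaves implicit: the corollary is stated as an immediate consequence of Theorem \ref{thm_poisson} and Corollary \ref{cor_poisson}, with the standard observation that an (anti-)chain map sends cocycles to cocycles and coboundaries to coboundaries, and that a chain-level inverse induces an inverse on cohomology. Your bookkeeping of the sign in the left case and of the inherited anticommutation for $\sharp^{-1}$ is accurate.
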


\section{Representations up to homotopy}\label{ruth}
  This section generalises the notion of
representation up to homotopy of Lie algebroids from
\cite{ArCr12,GrMe10} to representations of higher Lie algebroids. Some
basic examples are given and 3-term representations of a split Lie
2-algebroid are described in detail. The adjoint and coadjoint
representations of a split Lie 2-algebroid are special examples, which
this section describes with explicit formulas for their structure
objects and their coordinate transformation. Lastly, it shows how to
define these two representations together with their objects for
general Lie $n$-algebroids for all $n$.

\subsection{The category of representations up to homotopy}

Recall that a representation up to homotopy of a Lie algebroid $A$ is
given by an $A$-module of the form
$\Omega(A,\E)=\Omega(A)\otimes\Gamma(\E)$ for a graded vector bundle
$\E$ over $M$. In the same manner, a \emph{left representation up to homotopy of
a Lie $n$-algebroid $(\M,\Q)$} is defined as a left DG $\M$-module of the
form $\cin(\M)\otimes\Gamma(\E)$ for a graded vector bundle $\E\to M$. \emph{Right representations up to homotopy} are defined similarly as right DG $\M$-modules of the form $\Gamma(\E)\otimes\cin(\M)$. In what follows, we will focus only on left representations and the prefix ``left" will be omitted for simplicity.

Following the notation from \cite{ArCr12}, we denote the category of
representations up to homotopy by $\underline{\mathbb{R}\text{ep}}^\infty(\M,Q)$,
or simply by $\underline{\mathbb{R}\text{ep}}^\infty(\M)$. The isomorphism classes of representations up to
homotopy of this category are denoted by $\underline{\text{Rep}}^\infty(\M,\Q)$,
or by $\underline{\text{Rep}}^\infty(\M)$. A representation of the form
$\E=E_0\oplus\ldots\oplus E_{k-1}$ is called \emph{$k$-term
  representation}, or simply \emph{$k$-representation}.

\begin{remark}
  Any (left) DG $\M$-module is non-canonically isomorphic to a representation
  up to homotopy of $(\M,\Q)$ \cite{Mehta14}. The proof goes as
  follows: an $\M$-module is, by definition, the sheaf of sections
  $\Gamma(\mathcal{B})$ of a vector bundle $\mathcal{B}$ over
  $\M$. The pull-back $0_{\M}^*\mathcal{B}$, where
  $0_{\M}\colon M\to\M$ is the zero embedding, is an ordinary graded
  vector bundle $\E$ over $M$ and hence splits as
  $\E=\bigoplus_i E_i[i]$. According to \cite[Theorem 2.1]{Mehta14},
  the double pull-back $\pi_{\M}^*0_{\M}^*\mathcal{B}$ is non-canonically isomorphic
  to $\mathcal{B}$ as vector bundles over $\M$, where
  $\pi_{\M}\colon \M\to M$ is the projection map. Then, as a sheaf over $M$,
  $\Gamma(\mathcal{B})$ is identified with
  $\Gamma(\pi_{\M}^*0_{\M}^*\mathcal{B})=\Gamma(\pi_{\M}^*\E)$, which
  in turn is canonically isomorphic to
  $\cin(\M)\otimes\Gamma(\E)$. 
  \end{remark}

\begin{example}[$\Q$-closed functions]
  Let $(\M,\Q)$ be a Lie $n$-algebroid and suppose $\xi\in \cin(\M)^k$
  such that
  $\Q(\xi) = 0$. Then one can construct a representation up to
  homotopy $\cin(\M)\otimes\Gamma(\E_\xi)$ of $\M$ on the graded vector bundle
  $\E_\xi=(\mathbb{R}[0]\oplus\mathbb{R}[1-k])\times M\to M$
  (i.e.~$\mathbb{R}$ in degrees 0 and $k-1$, and zero otherwise). Its
  differential $\D_\xi$ is given in components by the map
	\[
	\D_\xi = \sum_i \D_\xi^i,
	\]
	where
	\[
	\D_\xi^i\colon \cin(\M)^i\oplus \cin(\M)^{i-k+1}\to \cin(\M)^{i+1}\oplus \cin(\M)^{i-k+2}
	\]
	is defined by the formula
	\[
	\D_\xi^i(\zeta_1,\zeta_2)=(\Q(\zeta_1) + (-1)^{i-k+1}\zeta_2\xi,\Q(\zeta_2)).
      \]
	If there is an element $\xi'\in \cin(\M)^k$ which is
        $\Q$-cohomologous to $\xi$, i.e.~$\xi-\xi'=\Q(\xi'')$ for some
        $\xi''\in \cin(\M)^{k-1}$, then the representations
        $\E_\xi$ and $\E_{\xi'}$ are isomorphic via the isomorphism
        $\mu\colon \E_\xi\to \E_{\xi'}$ defined in components by
	\[
	\mu^i\colon \cin(\M)^i\oplus \cin(\M)^{i-k+1}\to \cin(\M)^{i}\oplus \cin(\M)^{i-k+1}
	\]
	given by the formula
	\[
	\mu^i(\zeta_1,\zeta_2)=(\zeta_1+\zeta_2\xi'',\zeta_2).
	\]
	Hence, one obtains a well-defined map
        $H^\bullet(\M)\to\underline{\text{Rep}}^\infty(\M)$. In particular, if $\M$ is a Lie
        algebroid, the above construction recovers Example 3.5 in
        \cite{ArCr12}.
\end{example}

\subsection{The case of (split) Lie 2-algebroids}
Fix now a split Lie $2$-algebroid $\M$, and recall that from the
analysis of Section \ref{lie2lie3}, $\M$ is given by the sum $Q[1]\oplus B^*[2]$
which forms the complex
\[
B^*\overset{\ell}{\longrightarrow} Q\overset{\rho_Q}{\longrightarrow} TM.
\]
Unravelling the data of the definition of representations up to
homotopy for the special case where $E$ is concentrated only in degree
0 yields the following characterisation.

\begin{proposition}\label{Representations_of_Lie_2-algebroids}
  A representation of the Lie $2$-algebroid $Q[1]\oplus B^*[2]$
  consists of a (non-graded) vector bundle $E$ over $M$, together with
  a $Q$-connection $\nabla$ on $E$ such that\footnote{Note that all
    the objects that appear in the following equations act via the
    generalised wedge products that were discussed before. For
    example, $\partial(\diff_\nabla e)$ and $\omega_2(\omega_2(e))$
    mean $\partial\wedge\diff_\nabla e$ and
    $\omega_2\wedge\omega_2(e)$, respectively. This is explained in
    detail in the Appendix of \cite{ArCr12}.}  :
	\begin{enumerate}[(i)]
        \item $\nabla$ is flat, i.e.~$R_\nabla = 0$ on $\Gamma(E)$,
		\item $\partial_B\circ \diff_\nabla = 0$ on $\Gamma(E)$.
	\end{enumerate}
\end{proposition}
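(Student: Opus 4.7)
My plan is to exploit the fact that a representation up to homotopy on a graded bundle $\underline{E}$ concentrated in degree $0$ is determined by very little data, and to translate the condition $\D^2=0$ directly into the two equations stated.

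First, I would observe that since $E$ sits only in degree $0$, the degree $1$ part of the module is
\[
\bigl(\cin(\M)\otimes\Gamma(E)\bigr)^1 = \cin(\M)^1\otimes\Gamma(E)
 = \Gamma(Q^*)\otimes\Gamma(E) = \Omega^1(Q,E),
\]
so the structural operator restricts to an $\mathbb{R}$-linear map $\D\colon\Gamma(E)\to\Omega^1(Q,E)$. Applying the Leibniz rule with $\xi=f\in C^\infty(M)\subset \cin(\M)^0$ and using $\Q(f)=\rho_Q^*\diff f$ shows that $\nabla_qe:=\langle q,\D(e)\rangle$ satisfies the Leibniz rule of a $Q$-connection on $E$. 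Conversely, any $Q$-connection $\nabla$ gives such a $\D$ on $\Gamma(E)$, and the Leibniz rule extends it uniquely to $\cin(\M)\otimes\Gamma(E)$. This sets up the correspondence between the data of the representation and the pair $(E,\nabla)$.

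Next, I would check that $\D^2=0$ is equivalent to its vanishing on the generators $\Gamma(E)$: a direct computation using the Leibniz rules for both $\D$ and $\Q$ shows that $\D^2$ is $\cin(\M)$-linear in the graded sense, so it is determined by $\D^2|_{\Gamma(E)}$. Fix $e\in\Gamma(E)$ and write $\D(e)=\sum_i\alpha_i\otimes e_i$ with $\alpha_i\in\Gamma(Q^*)$. Applying $\D$ again and using the formula $\Q(\alpha)=\diff_Q\alpha+\partial_B\alpha\in\Omega^2(Q)\oplus\Gamma(B)$ recalled from Section \ref{lie2lie3}, I get
\[
\D^2(e) = \sum_i(\diff_Q\alpha_i)\otimes e_i - \sum_i\alpha_i\wedge\D(e_i) + \sum_i(\partial_B\alpha_i)\otimes e_i.
\]

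The crucial observation is that the first two terms reassemble into $\diff_\nabla(\diff_\nabla e)\in\Omega^2(Q,E)$, while the third term is $\partial_B(\diff_\nabla e)\in\Gamma(B)\otimes\Gamma(E)$. These lie in the two distinct summands $\Gamma(\wedge^2Q^*)\otimes\Gamma(E)$ and $\Gamma(B)\otimes\Gamma(E)$ of $\cin(\M)^2\otimes\Gamma(E)$, so vanishing of $\D^2(e)$ splits into the two independent conditions. A short calculation with the Koszul formula for $\diff_\nabla$ gives $\diff_\nabla\diff_\nabla e(q_1,q_2)=R_\nabla(q_1,q_2)e$, so the first condition is precisely $R_\nabla=0$ and the second is $\partial_B\circ\diff_\nabla=0$, as claimed.

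The main point that requires care is signs and the identification of the two summands of $\cin(\M)^2$ in the splitting $\Gamma(\wedge^2Q^*)\oplus\Gamma(B)$; once these are handled and one notices that $\D^2$ is graded $\cin(\M)$-linear (so that checking on $\Gamma(E)$ suffices), the argument is a direct unpacking of the definitions of $\Q$ and $\D$.
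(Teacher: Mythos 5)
Your proposal is correct and follows essentially the same route as the paper's proof: identify $\D|_{\Gamma(E)}$ with $\diff_\nabla$ for a $Q$-connection via the Leibniz rule and $\Q(f)=\rho_Q^*\diff f$, then expand $\D^2(e)=\diff_\nabla^2 e+\partial_B(\diff_\nabla e)$ and split according to the decomposition $\cin(\M)^2\otimes\Gamma(E)=\Omega^2(Q,E)\oplus\Gamma(B\otimes E)$. The only difference is that you spell out explicitly why $\D^2$ is graded $\cin(\M)$-linear (hence determined on $\Gamma(E)$), a point the paper leaves implicit.
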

\begin{proof}
  Let $(E,\D)$ be a representation of the Lie $2$-algebroid. Due to
  the Leibniz rule, $\D$ is completely characterised by what it does
  on $\Gamma(E)$. By definition, it sends $\Gamma(E)$ into
  $\Omega^1(Q,E)$. Using the Leibniz rule once more together with the
  definition of the homological vector field $\Q$ on $\Omega^1(Q)$,
  for all $f\in C^\infty(M)$ and all $e\in\Gamma(E)$ yields
	\[
	\D(fe) = (\rho_Q^*\diff f)\otimes e + f\D(e),
	\]
	which implies that $\D = \diff_\nabla$ for a
        $Q$-connection $\nabla$ on $\Gamma(E)$. Moreover, by definition of $\D$
        one must have $\D^2(e) = 0$ for all $e\in\Gamma(E)$. On the
        other hand, a straightforward computation yields
	\[
          \D^2(e) = \D(\diff_\nabla e) = \diff_\nabla^2 e +
          \partial_B(\diff_\nabla
          e)\in\Omega^2(Q,E)\oplus\Gamma(B\otimes E).\qedhere
	\]
\end{proof}

\begin{example}[Trivial line bundle]\label{Trivial line bundle representation example}
	The trivial line bundle $\R[0]$ over $M$ with $Q$-connection defined by
	\[
	\diff_\nabla f = \diff_Q f =\rho_Q^* \diff f
	\]
	is a representation of the Lie $2$-algebroid $Q[1]\oplus
        B^*[2]$. The operator $\D$ is given by the homological vector
        field $\Q$ and thus the cohomology induced by the
        representation is the Lie $2$-algebroid cohomology:
        $H^\bullet(\M,\R) = H^\bullet(\M)$. The shifted version of this example was used before to define general shifts of DG $\M$-modules.
\end{example}

\begin{example}\label{Trivial representation of rank k example}
  More generally, for all $k>0$, the trivial vector bundle $\R^k$ of
  rank $k$ over $M$ with $Q$-connection defined component-wise as in
  the example above becomes a representation with cohomology
  $H^\bullet(\M,\R^k)=H^\bullet(\M)\oplus\ldots\oplus H^\bullet(\M)$
  ($k$-times).
\end{example}

\begin{remark}
  Given a split Lie $n$-algebroid $A_1[1]\oplus\ldots\oplus A_n[n]$
  over a smooth manifold $M$,  with $n\geq 2$, the vector bundle
  $A_1\to M$ carries a skew-symmetric dull algebroid structure induced by the
  2-bracket and the anchor $\rho\colon A_1\to TM$ given by
  $\Q(f)=\rho^*\diff f$, for $f\in C^\infty(M)$. Hence, Proposition
  \ref{Representations_of_Lie_2-algebroids}, Example \ref{Trivial line
    bundle representation example} and Example \ref{Trivial
    representation of rank k example} can be carried over verbatim to
  the general case.
\end{remark}

A more interesting case is for representations $\E$ which are
concentrated in 3 degrees. An explicit description
of those representations is given below. The reader should note the
similarity of the following proposition with the description of 2-term
representations of Lie algebroids from \cite{ArCr12}.

\begin{proposition}\label{3-term_representations}
  A 3-term representation up to homotopy
  $(\E = E_0\oplus E_1\oplus E_2,\D)$ of $Q[1]\oplus B^*[2]$ is
  equivalent to the following data:
	\begin{enumerate}[(i)]
		\item A degree 1 map $\partial\colon \E\to \E$ such that $\partial^2 = 0$,
		\item a $Q$-connection $\nabla$ on the complex $\partial\colon E_\bullet\to E_{\bullet + 1}$,
		\item an element $\omega_2\in\Omega^2(Q,\underline{\End}^{-1}(\E))$,
              \item an element
                $\omega_3\in\Omega^3(Q,\underline{\End}^{-2}(\E))$,
                and an element
                $\phi_j\in\Gamma(B)\otimes\Omega^j(Q,\underline{\End}^{-j-1}(\E))$
                for $j=0,1$
              \end{enumerate}
              such that\footnote{In the following
                  equations, the map
                  $\partial_B\colon\Omega^1(Q)\to\Gamma(B)$ extends to
                  $\partial_B\colon \Omega^k(Q)\to\Omega^{k-1}(Q,B)$ by the
                  rule
                  $\partial_B(\tau_1\wedge\ldots\wedge\tau_k) =
                  \sum_{i=1}^k
                  (-1)^{i+1}\tau_1\wedge\ldots\wedge\hat{\tau_i}\wedge\ldots\wedge\tau_k\wedge\partial_B\tau_i$,
                  for $\tau_i\in\Omega^1(Q)$.}
	\begin{enumerate}
		\item $\partial\circ\omega_2 + \diff_\nabla^2 + \omega_2\circ\partial = 0$,
		\item
                  $\partial\circ\phi_0 + \partial_B\circ \diff_\nabla
                  + \phi_0\circ\partial = 0$,
		\item
                  $\partial\circ\omega_3 + \diff_\nabla\circ\omega_2 +
                  \omega_2\circ \diff_\nabla + \omega_3\circ\partial =
                  \langle \omega,\phi_0 \rangle$,
		\item
                  $\diff_{\overline{\nabla}}\phi_0 +
                  \partial\circ\phi_1 + \partial_B\circ\omega_2 +
                  \phi_1\circ\partial = 0$,
		\item
                  $\diff_\nabla\circ\omega_3 + \omega_2\circ\omega_2 +
                  \omega_3\circ \diff_\nabla = \langle \omega,\phi_1
                  \rangle$,
		\item
                  $\diff_{\overline{\nabla}}\phi_1 +
                  \omega_2\circ\phi_0 + \partial_B\circ\omega_3 +
                  \phi_0\circ\omega_2 = 0$,
		\item $\phi_0\circ\phi_0 + \partial_B\circ\phi_1 = 0$,
	\end{enumerate}
where $\overline{\nabla}$ is the $Q$-connection on
                $B\otimes\underline{\End}^{-j-1}(\E)$ induced by
                $\nabla$ on $B$ and $\nabla^{\underline{\End}}$ on
                $\underline{\End}(\E)$.
\end{proposition}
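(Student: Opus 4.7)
The plan is to exploit the tridegree decomposition of $\cin(\M) \otimes \Gamma(\E)$ induced by $\cin(\M)^i = \bigoplus_{p+2q=i} \Gamma(\wedge^p Q^* \otimes S^q B)$ together with the grading of $\E = E_0 \oplus E_1 \oplus E_2$. Since $\D$ is determined by its restriction to $\Gamma(\E)$ via the Leibniz rule $\D(\xi\eta) = \Q(\xi)\eta + (-1)^{|\xi|}\xi\D(\eta)$, and this restriction maps $\Gamma(E_k)$ into $(\cin(\M) \otimes \Gamma(\E))^{k+1}$, I would first write $\D(e)$ for $e \in \Gamma(E_k)$ as a sum of components in $\Gamma(\wedge^p Q^* \otimes S^q B \otimes E_j)$ with $p + 2q + j = k+1$ and $0 \le j \le 2$. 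Enumerating admissible triples $(p,q,j)$ across $k \in \{0,1,2\}$ produces exactly six linear maps, to be identified, in order, with $\partial$ (from $j = k+1$), $\diff_\nabla$ for a $Q$-connection $\nabla$ on each $E_j$ (from $p = 1$, $j = k$), $\omega_2$ (from $p = 2$, $j = k-1$), $\omega_3$ (from $p = 3$, $j = k-2$), $\phi_0$ (from $q = 1$, $j = k-1$) and $\phi_1$ (from $p = q = 1$, $j = k-2$). Extending $\cin(\M)$-linearly via the Leibniz rule then recovers $\D$ from the six-tuple, establishing the one-to-one correspondence of operators.

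Next I would use that the Leibniz rule for $\D$ combined with $\Q^2 = 0$ forces $\D^2$ to be $\cin(\M)$-linear (a short graded computation yielding $\D^2(\xi\eta) = \xi\,\D^2(\eta)$ after sign cancellation), so $\D^2 = 0$ reduces to its vanishing on $\Gamma(\E)$. Expanding $\D^2(e)$ for $e \in \Gamma(E_k)$ using the explicit formulas $\Q(f) = \rho^*\diff f$, $\Q(\tau) = \diff_Q \tau + \partial_B \tau$ for $\tau \in \Omega^1(Q)$, and $\Q(b) = \diff_\nabla b - \langle\omega, b\rangle$ for $b \in \Gamma(B)$ recalled from Definition \ref{geom_split_2-alg}, and collecting terms by tridegree, yields the following outcome: the $(0,0,k+2)$ and $(1,0,k+1)$ components recover $\partial^2 = 0$ and the commutation of $\partial$ with $\nabla$, i.e.\ the statement that $\nabla$ is a connection on the complex as in (ii); while the components of tridegrees $(2,0,k)$, $(0,1,k)$, $(3,0,k-1)$, $(1,1,k-1)$, $(4,0,k-2)$, $(2,1,k-2)$, $(0,2,k-2)$ yield equations (1)--(7), respectively. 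The $\langle\omega,\phi_j\rangle$ terms on the right-hand sides of (3) and (5) appear precisely where the Leibniz rule propagates $\D$ through the $\Gamma(B)$-factor of a $\phi_j$-output and picks up the $-\langle\omega, b\rangle$ summand of $\Q(b)$.

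The hard part is sign bookkeeping, coming from the graded Leibniz rule and from the simultaneous action of $\diff_Q$, $\partial_B$, $\diff_\nabla$ and $\langle\omega,\cdot\rangle$ as components of $\Q$ on $\wedge Q^* \otimes SB$. In particular, one must track how $\partial_B$ converts $\wedge^p Q^*$-components into $\wedge^{p-1}Q^* \otimes \Gamma(B)$-components, and how $\diff_\nabla b$ converts $\Gamma(S^q B)$-components into $\Omega^1(Q) \otimes \Gamma(S^{q-1} B)$-components, matching these shifts to the intended tridegrees. Once this is carried out uniformly over $k$, each of the seven equations is a routine collection of terms of a fixed tridegree and a direct verification.
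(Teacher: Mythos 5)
Your proposal is correct and follows essentially the same route as the paper: determine $\D$ by its restriction to $\Gamma(\E)$ via the Leibniz rule, read off the six structure maps from the degree decomposition of $\cin(\M)\otimes\Gamma(\E)$, and extract the equations by sorting $\D^2=0$ on $\Gamma(\E)$ by (tri)degree. Your explicit tridegree bookkeeping and the remark that $\D^2$ is $\cin(\M)$-linear (so that it suffices to check $\D^2=0$ on sections) simply spell out what the paper leaves as "a straightforward computation and a degree count," and your assignment of tridegrees to equations (1)--(7) is accurate.
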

\begin{remark}
  \begin{enumerate}
\item  If both of the bundles $E_1$ and $E_2$ are zero, the
  equations agree with those of a 1-term representation.
\item  The equations in
  the statement can be summarised as follows:
		\[
		[\partial,\phi_0] + \partial_B\circ \diff_\nabla = 0,\qquad
		\phi_0\circ\phi_0 + \partial_B\circ\phi_1 = 0,
		\] and for all $i$:
		\[
                  [\partial,\omega_i] + [\diff_\nabla,\omega_{i-1}]
                  +\omega_2\circ\omega_{i-2} +
                  \omega_3\circ\omega_{i-3} + \ldots
                  +\omega_{i-2}\circ\omega_2 = \langle
                  \omega,\phi_{i-3} \rangle,
		\]
		\[
                  \partial_B\circ\omega_{i+2} + [\partial,\phi_{i+1}]
                  + \diff_{\overline{\nabla}}\phi_i +
                  \sum_{j\geq2}[\omega_j,\phi_{i-j+1}] = 0.
		\]

  \end{enumerate}
\end{remark}
\begin{proof}
  It is enough to check that $\D$ acts on $\Gamma(\E)$. Since $\D$ is of degree
  1, it maps each $\Gamma(E_i)$ into the direct sum
  \[
          \Gamma(E_{i+1}) \oplus
          \left(\cin(\M)^1\otimes\Gamma(E_i)\right) \oplus
          \left(\cin(\M)^2\otimes\Gamma(E_{i-1})\right) \oplus
          \left(\cin(\M)^3\otimes\Gamma(E_{i-2})\right).
	\]
	Considering the components of $\D$, this translates to the
        following three equations: 
	\[
	\D(e) = \partial(e) + d(e)\in\Gamma(E_1)\oplus\Omega^1(Q,E_0)
	\]
	for $e\in\Gamma(E_0)$,
	\[
          \D(e) = \partial(e) + d(e) + \omega_2(e) +
          \phi_0(e)\in\Gamma(E_2)\oplus\Omega^1(Q,E_1)\oplus\Omega^2(Q,E_0)\oplus\left(\Gamma(B)\otimes\Gamma(E_0)\right)
	\]
	for $e\in\Gamma(E_1)$, and
	\begin{align*}
	\D(e) = &\ d(e) + \omega_2(e) + \phi_0(e) + \omega_3(e) +\phi_1(e)\\
	& \in\Omega^1(Q,E_2)\oplus\Omega^2(Q,E_1)\oplus\left(\Gamma(B)\otimes\Gamma(E_1)\right)\\
	& \oplus\Omega^3(Q,E_0)\oplus\left(\Gamma(B)\otimes\Omega^1(Q,E_0)\right)
	\end{align*}
	for $e\in\Gamma(E_2)$. Due to Lemma
        \ref{wedge_product-operators_Correspondence_Lemma} and the
        Leibniz rule for $\D$, 
        $\partial\in\underline{\End}^1(\E)$, $d=\diff_\nabla$ where
        $\nabla$ are $Q$-connections on the vector bundles $E_i$ for
        $i = 0,1,2$,
        $\omega_i\in\Omega^i(Q,\underline{\End}^{1-i}(\E))$ for
        $i = 2,3$, and
        $\phi_i\in\Gamma(B)\otimes\Omega^i(Q,\underline{\End}^{-i-1}(\E))$
        for $i = 0,1$.

A straightforward computation and a degree count in the expansion of
the equation $\D^2=0$ shows that $(\E,\partial)$ is a complex,
$\nabla$ commutes with $\partial$, and the equations in the statement hold.
\end{proof}

\subsection{Adjoint representation of a Lie 2-algebroid}\label{adjoint}
This section shows that any split Lie $2$-algebroid
$Q[1]\oplus B^*[2]$ admits a 3-term representation up to homotopy
which is called \emph{the adjoint representation}. It is a
generalisation of the adjoint representation of a (split) Lie
$1$-algebroid studied in \cite{ArCr12}.

\begin{proposition}\label{Adjoint_representation_of_Lie_2-algebroid}
  Any split Lie $2$-algebroid $Q[1]\oplus B^*[2]$ admits a 3-term
  representation up to homotopy as follows: Choose arbitrary
  $TM$-connections on $Q$ and $B^*$ and denote both by $\nabla$. Then
  the structure objects are\footnote{Some signs are chosen
    so that the map given in
    \ref{adjoint_module_adjoint_representation_isomorphism} is an
    isomorphism for the differential of the adjoint module defined
    earlier.}
	\begin{enumerate}[(i)]
        \item the \textit{adjoint complex} $B^*[2]\to Q[1]\to TM[0]$
          with maps $-\ell$ and $\rho_Q$,
        \item the two $Q$-connections $\nabla^{\text{bas}}$ on $Q$ and
          $TM$, and the $Q$-connection $\nabla^*$ on $B^*$ given by
          the split Lie 2-algebroid,
        \item the element
          $\omega_2\in\Omega^2(Q,\Hom(Q,B^*)\oplus\Hom(TM,Q))$ defined
          by
		\[
                  \omega_2(q_1,q_2)q_3 =
                  -\omega(q_1,q_2,q_3)\in\Gamma(B^*)\ \text{and}\
                  \omega_2(q_1,q_2)X =
                  -R_\nabla^\text{bas}(q_1,q_2)X\in\Gamma(Q)
		\]
		for $q_1,q_2,q_3\in\Gamma(Q)$ and $X\in\mathfrak{X}(M)$,
        \item the element $\omega_3\in\Omega^3(Q,\Hom(TM,B^*))$
                defined by
		\[
		\omega_3(q_1,q_2,q_3)X = - (\nabla_X\omega)(q_1,q_2,q_3)\in\Gamma(B^*)
		\]
		for $q_1,q_2,q_3\in\Gamma(Q)$ and $X\in\mathfrak{X}(M)$,
		\item the element $\phi_0\in\Gamma(B)\otimes(\Hom(Q,B^*)\oplus\Hom(TM,Q))$ defined by
		\[
		\phi_0(\beta)X = \ell(\nabla_X\beta) - \nabla_X(\ell(\beta))\in\Gamma(Q)\ \text{and}\ 
		\phi_0(\beta)q = \nabla_{\rho(q)}\beta - \nabla^*_q\beta\in\Gamma(B^*)
		\]
		for $\beta\in\Gamma(B^*),q\in\Gamma(Q),X\in\mathfrak{X}(M)$,
		\item the element $\phi_1\in\Gamma(B)\otimes\Omega^1(Q,\Hom(TM,B^*))$ defined by
		\[
                  \phi_1(\beta,q)X =  \nabla_X\nabla^*_q \beta - \nabla^*_q\nabla_X
                  \beta-\nabla^*_{\nabla_X q} \beta 
                   + \nabla_{\nabla^{\rm bas}_  qX} \beta\in\Gamma(B^*)
		\]
		for $\beta\in\Gamma(B^*),q\in\Gamma(Q),X\in\mathfrak{X}(M)$.
	\end{enumerate}
\end{proposition}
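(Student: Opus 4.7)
The plan is to apply Proposition \ref{3-term_representations}, which reduces the claim to verifying the seven identities (1)--(7) listed there for the specified $\partial$, $\nabla$, $\omega_2$, $\omega_3$, $\phi_0$ and $\phi_1$. All the needed input is contained in Definition \ref{geom_split_2-alg} (axioms (i)--(v) of the split Lie $2$-algebroid) together with the basic-connection identities \eqref{eq_bas_Q_1}--\eqref{eq_bas_Q_3}.

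Before checking the seven equations, I would first observe that the adjoint chain $B^*\overset{-\ell}{\to} Q\overset{\rho_Q}{\to} TM$ is indeed a complex: the identity $\rho_Q\circ\ell=0$ is extracted from $\Q^2=0$ on $C^\infty(M)$. I would then unpack each equation into its components according to the direct-sum decomposition of the adjoint bundle.

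For equations (1) and (2), which involve only $\partial$, $\nabla$, $\omega_2$ and $\phi_0$, the verification on the $Q$-piece of (1) reduces to combining $\Jac_{[\cdot,\cdot]}=\ell\circ\omega$ (axiom (iii)) with the basic-curvature identity \eqref{eq_bas_Q_3}, while on the $TM$-piece it reduces to \eqref{eq_bas_Q_2}. Equation (2) splits likewise into axiom (ii) of Definition \ref{geom_split_2-alg} (evaluated on $Q$) and axiom (i) (evaluated on $B^*$). Equations (3) and (5) encode differentiated versions of the same identities: (5) reduces directly to axiom (v), $\diff_{\nabla^*}\omega=0$, and (3) follows by differentiating $\Jac=\ell\circ\omega$ along the auxiliary $TM$-connection and then applying axiom (iv), $R_{\nabla^*}(q_1,q_2)\beta=-\omega(q_1,q_2,\ell(\beta))$. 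Equations (4), (6) and (7) express the compatibility of the chosen $TM$-connections on $Q$ and $B^*$ with $\nabla^*$, $\ell$ and $\omega$; each unpacks into a standard Bianchi-type calculation using the definitions of $\phi_0$ and $\phi_1$ and the curvature formula $R_\nabla(X,Y)=\nabla_X\nabla_Y-\nabla_Y\nabla_X-\nabla_{[X,Y]}$.

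The main obstacle is bookkeeping rather than genuine mathematical content. Each of the seven identities decomposes into several components according to whether its arguments lie in $B^*$, $Q$ or $TM$, and every composition hides a wedge-product sign governed by Lemma \ref{wedge_product-operators_Correspondence_Lemma}. The sign choices in the statement, in particular $\partial=-\ell$ and the minus signs in $\omega_2$ and $\omega_3$, are tuned precisely so that these signs cancel and the comparison with the adjoint module of Section \ref{adjoint} becomes an isomorphism of DG $\M$-modules. Once one fixes a single convention and tracks it carefully across all seven equations, each verification is a direct and elementary computation.
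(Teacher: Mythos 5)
Your proposal is correct, but it takes a genuinely different route from the one the paper follows. You carry out the ``direct'' proof: feed the candidate structure objects into the seven equations of Proposition \ref{3-term_representations} and verify each component using the axioms of Definition \ref{geom_split_2-alg} together with \eqref{eq_bas_Q_1}--\eqref{eq_bas_Q_3}; your dictionary between equations and axioms (e.g.\ equation (2) splitting into axiom (ii) on the $Q$-component and axiom (i) on the $B^*$-component, equation (5) being $\diff_{\nabla^*}\omega=0$, equations (1) and (3) resting on $\Jac_{[\cdot,\cdot]}=\ell\circ\omega$ and the basic-curvature identities) is the right one, and the preliminary observation that $\rho_Q\circ\ell=0$ follows from the $\Gamma(B)$-component of $\Q^2(f)=0$ is needed and correct. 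The paper explicitly acknowledges this computation as a valid proof but defers it to \cite{Papantonis21} and instead argues via Section \ref{adjoint_module_adjoint_representation_isomorphism}: it builds the $\cin(\M)$-module isomorphism $\mu_\nabla\colon\cin(\M)\otimes\Gamma(TM[0]\oplus Q[1]\oplus B^*[2])\to\mathfrak{X}(\M)$ and computes $\ldr{\Q}$ on the generators $\hat\beta$, $\hat q$, $\nabla_X^{B^*}\oplus\nabla_X^Q$, reading off exactly the listed structure objects. That route buys two things your approach does not: $\D_{\ad_\nabla}^2=0$ is automatic from $\ldr{\Q}^2=0$, so none of the seven identities needs to be checked at all, and the construction transfers verbatim to Lie $n$-algebroids (Section \ref{Adjoint of Lie n-algebroids}); it also explains, rather than merely tolerates, the sign choices you flag as the main bookkeeping hazard -- they are forced by requiring $\mu_\nabla$ to intertwine the differentials, as the footnote to the statement indicates. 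What your approach buys in exchange is self-containedness: it never invokes the adjoint module or the graded-manifold picture, only the classical geometric data of Definition \ref{geom_split_2-alg}.
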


The proof can be done in two ways. First, one could check explicitly
that all the conditions of a 3-representation of $Q[1]\oplus B^*[2]$
are satisfied. This is an easy but long computation and it can be found in \cite{Papantonis21}. Instead, the following
section shows that given a splitting and $TM$-connections on the
vector bundles $Q$ and $B^*$, there exists an isomorphism of sheaves
of $\cin(\M)$-modules between the adjoint module $\mathfrak{X}(\M)$
and $\cin(\M)\otimes\Gamma(TM[0]\oplus Q[1]\oplus B^*[2])$, such that the
objects defined above correspond to the differential
$\ldr{\Q}$. Another advantage of this approach is that it gives a
precise recipe for the definition and the explicit formulas for the
components of the adjoint representation of a Lie $n$-algebroid for
general $n$.

\subsection{Adjoint module vs adjoint representation}\label{adjoint_module_adjoint_representation_isomorphism}

Recall that for a split $[n]$-manifold $\M=\bigoplus E_i[i]$, the
space of vector fields over $\M$ is generated as a $\cin(\M)$-module
by two special kinds of vector fields. Namely, the degree $-i$ vector
fields $\hat{e}$ for $e\in\Gamma(E_i)$, and the family of vector
fields $\nabla^1_X \oplus \ldots \oplus \nabla^n_X$ for
$X\in\mathfrak{X}(M)$ and a choice of $TM$-connections $\nabla^i$ on
the vector bundles $E_i$.

Consider now a Lie 2-algebroid $(\M,\Q)$ together with a splitting
$\M\simeq Q[1]\oplus B^*[2]$ and a choice of $TM$-connections
$\nabla^{B^*}$ and $\nabla^Q$ on $B^*$ and $Q$, respectively. These
choices give as follows the adjoint representation $\ad_\nabla$, whose
complex is given by $TM[0]\oplus Q[1]\oplus B^*[2]$. Define a map
$\mu_\nabla\colon \cin(\M)\otimes\Gamma(TM[0]\oplus Q[1]\oplus B^*[2])\to \mathfrak{X}(\M)$ on the generators by
\[
\Gamma(B^*)\ni \beta  \mapsto \hat{\beta},\qquad
\Gamma(Q)\ni q \mapsto \hat{q},\qquad
\mathfrak{X}(M)\ni X \mapsto \nabla^{B^*}_X \oplus \nabla^Q_X
\]
and extend $\cin(\M)$-linearly to the whole space to obtain a degree-preserving
isomorphism of sheaves of $\cin(\M)$-modules. A straightforward
computation shows that
\[
\ldr{\Q}(\hat{\beta}) = \mu\left(-\ell(\beta) + \diff_{\nabla^*}\beta\right),
\]
\[
  \ldr{\Q}(\hat{q}) = \mu\left(\rho_Q(q) +
    \diff_{\nabla^{\text{bas}}}q +\omega_2(\cdot\,,\cdot)q +
    \phi_0(\cdot)q\right),
\]
\[
  \ldr{\Q}(\nabla_X^{B^*} \oplus \nabla_X^Q) = \mu\left(
    \diff_{\nabla^{\text{bas}}} X +\phi_0(\cdot)X
    +\omega_2(\cdot\,,\cdot)X + \omega_3(\cdot\,,\cdot\,,\cdot)X +
    \phi_1(\cdot\,,\cdot)X \right)
\]
and therefore, the objects in the statement of Proposition
\ref{Adjoint_representation_of_Lie_2-algebroid} define the differential $\D_{\ad_\nabla}:=\mu_\nabla^{-1}\circ\ldr{\Q}\circ\mu_\nabla$ of a
3-representation of
$Q[1]\oplus B^*[2]$, called the \textit{adjoint representation} and
denoted as $(\ad_\nabla,\D_{\ad_\nabla})$.  The adjoint representation
is hence, up to isomorphism, independent of the choice of splitting
and connections (see the following section for the precise transformations), and gives so a
well-defined class $\ad\in\underline{\text{Rep}}^\infty_L(\M)$.

Due to the result above, one can also define the \textit{coadjoint
  representation} of a Lie 2-algebroid $(\M,\Q)$ as the isomorphism
class $\ad^*\in\underline{\text{Rep}}^\infty_L(\M)$. To find an explicit representative of $\ad^*$,
suppose that $Q[1]\oplus B^*[2]$ is a splitting of $\M$, and consider
its adjoint representation $\ad_\nabla$ as above for some choice of
$TM$-connections $\nabla$ on $B^*$ and $Q$. Recall that given a
representation up to homotopy $(\E,\D)$ of a $\M$, its dual $\E^*$
becomes a representation up to homotopy with operator $\D^*$
characterised by the formula
\[
\Q(\xi\wedge\xi') = \D^*(\xi)\wedge\xi' + (-1)^{|\xi|}\xi\wedge\D(\xi'),
\]
for all $\xi\in \cin(\M)\otimes\Gamma(\E^*)$ and
$\xi'\in \cin(\M)\otimes\Gamma(\E)$.  Here,
$\wedge=\wedge_{\langle\cdot\,,\cdot\rangle}$, with
$\langle\cdot\,,\cdot\rangle$ the pairing of $\E$ with $\E^*$. Unravelling the definition of the dual for the left
representation $\ad_\nabla$, one finds that the structure differential of $\ad_\nabla^*=\cin(\M)\otimes\Gamma(B[-2]\oplus Q^*[-1] \oplus TM[0])$ is
given by the following objects:
\begin{enumerate}
\item the \textit{coadjoint complex} $T^*M\to Q^* \to B$ obtained by
  $-\rho_Q^*$ and $-\ell^*$,
	\item the $Q$-connections $\nabla$ on $B$ and $\nabla^{\text{bas},*}$ on $Q^*$ and $T^*M$,
	\item the elements
	\begin{center}
		\begin{tabular}{l l}
                  $\omega_2^*(q_1,q_2)\tau=\tau\circ\omega_2(q_1,q_2)$, & $\omega_2^*(q_1,q_2)b=-b\circ\omega_2(q_1,q_2)$, \\
                  $\phi_0^*(\beta)\tau=\tau\circ\phi_0(\beta)$, & $\phi_0^*(\beta)b=-b\circ\phi_0(\beta)$, \\
                  $\omega_3^*(q_1,q_2,q_3)b=-b\circ\omega_3(q_1,q_2,q_3)$, & $\phi_1^*(\beta,q)b=-b\circ\phi_1(\beta,q)$,
		\end{tabular}
	\end{center}
	for all $q,q_1,q_2,q_3\in\Gamma(Q),\tau\in\Gamma(Q^*),b\in\Gamma(B)$ and $\beta\in\Gamma(B^*)$.
      \end{enumerate}
  
\begin{remark}\label{Iso_coad_mod_coad_rep}
  The coadjoint representation can also be obtained from the coadjoint
  module $\Omega^1(\M)$ by the right $\cin(\M)$-module isomorphism
  $\mu^\star_\nabla\colon\Omega^1(\M)\to\Gamma(B[-2] \oplus
  Q^*[-1] \oplus T^*M[0])\otimes\cin(\M)$ which is dual to
  $\mu_\nabla\colon \cin(\M)\otimes\Gamma(TM[0]\oplus Q[1]\oplus B^*[2])\to
  \mathfrak{X}(\M)$ above. Explicitly, it is defined as the pull-back map
  $\mu^\star_\nabla(\omega)=\omega\circ\mu_\nabla$ for all $\omega\in\Omega^1(\M)$,
  whose inverse is given on the generators by 
  $\Gamma(B[-2] \oplus Q^*[-1] \oplus T^*M[0])\otimes\cin(\M)\to
  \Omega^1(\M)$,
  \[ 
  \Gamma(B)\ni b\mapsto \dr b-\diff_{\nabla^*} b, \quad \Gamma(Q^*)\ni\tau\mapsto  \dr\tau - \diff_{\nabla^*}\tau,
    \quad \text{and}\quad \Omega^1(M)\ni\theta\mapsto\theta.
  \]
  \end{remark}

\subsection{Coordinate transformation of the adjoint representation} 
The adjoint representation up to homotopy of a Lie 2-algebroid was constructed after a choice of splitting and $TM$-connections. This
section explains how the adjoint representation transforms under
different choices.

First, a morphism of 3-representations of a split Lie 2-algebroid can
be described as follows.

\begin{proposition}\label{morphism_of_3-term_representations}
  Let $(\E,\D_{\E})$ and $(\underline{F},\D_{\underline{F}})$ be
  3-term representations up to homotopy of the split Lie 2-algebroid
  $Q[1]\oplus B^*[2]$. A morphism $\mu\colon \E\to \underline{F}$ is
  equivalent to the following data:
	\begin{enumerate}[(i)]
        \item For each $i=0,1,2$, an element
          $\mu_i\in\Omega^{i}(Q,\underline{\Hom}^{-i}(\E,\underline{F}))$.
        \item An element
          $\mu^b\in\Gamma(B\otimes\underline{\Hom}^{-2}(\E,\underline{F}))$.
	\end{enumerate}
	The above objects are subject to the relations
	\begin{enumerate}
        \item
          $[\partial,\mu_i] + [\diff_\nabla,\mu_{i-1}] +
          {\displaystyle\sum_{j+k=i,i\geq2}[\omega_j,\mu_k]} = \langle
          \omega,\mu^b_{i-3} \rangle$,
		\item $[\partial,\mu^b] + [\phi_0,\mu_0] + \partial_B\circ\mu_1 = 0$,
		\item
                  $\diff_{\overline{\nabla}}\mu^b + [\phi_0,\mu_1] +
                  [\phi_1,\mu_0] + \partial_B\circ\mu_2 = 0$.
	\end{enumerate}
\end{proposition}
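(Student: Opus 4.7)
The plan is to apply Lemma \ref{wedge_product-operators_Correspondence_Lemma} and then expand the compatibility condition $\D_{\underline{F}}\circ\mu=\mu\circ\D_\E$ by bidegree in $\cin(\M)$, in close analogy to the proof of Proposition \ref{3-term_representations}.

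First, a degree $0$ morphism of left DG modules is in particular $\cin(\M)$-linear in the graded sense, so Lemma \ref{wedge_product-operators_Correspondence_Lemma} identifies $\mu$ with a degree $0$ element of $\cin(\M)\otimes\Gamma(\underline{\Hom}(\E,\underline{F}))$. Under the isomorphism $\cin(\M)\simeq\Gamma(\wedge Q^*\otimes SB)$ with $Q^*$ in degree $1$ and $B$ in degree $2$, and since $\E$ and $\underline{F}$ are concentrated in degrees $0,-1,-2$, the graded Hom $\underline{\Hom}^{-p}(\E,\underline{F})$ vanishes for $p\geq 3$. Hence the decomposition of $\mu$ reduces to a finite sum $\mu=\mu_0+\mu_1+\mu_2+\mu^b$, whose components take values in exactly the spaces listed in items (i) and (ii) of the statement.

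Next I would expand $\D_\E$ and $\D_{\underline{F}}$ into their components as in Proposition \ref{3-term_representations}, namely $\partial$, $\diff_\nabla$, $\omega_2$, $\omega_3$, $\phi_0$ and $\phi_1$. Substituting these together with the decomposition of $\mu$ into $\D_{\underline{F}}\circ\mu=\mu\circ\D_\E$ produces a finite sum of terms, each of definite bidegree $(p,k)$ in $\cin(\M)\otimes\Gamma(\underline{\Hom}(\E,\underline{F}))$. Equating coefficients at each bidegree yields the equations in the statement: terms with no $\Gamma(B)$-factor give the relations (1), where the right-hand side $\langle\omega,\mu^b\rangle$ arises precisely from the interaction of the structure $3$-form $\omega$ of the Lie $2$-algebroid with the $B$-valued component $\mu^b$; the terms with one $\Gamma(B)$-factor at bidegrees $0$ and $1$ in $Q^*$ give the relations (2) and (3), respectively.

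The main obstacle is the sign bookkeeping in the last step, since the wedge products $\wedge_h$ produce Koszul signs depending on both the $\cin(\M)$-degree and the $\underline{\Hom}$-shift of each factor, and these signs are what reorganise the resulting expressions into the graded commutators $[\partial,\mu_i]$, $[\diff_\nabla,\mu_i]$, $[\omega_j,\mu_k]$ and $[\phi_j,\mu_k]$ that appear in the statement. This step is however purely computational and formally identical to the sign bookkeeping already carried out in the proof of Proposition \ref{3-term_representations}, with $\D$ replaced by the graded commutator $[\D,\mu]$, so no new geometric input is required.
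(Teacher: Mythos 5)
Your proposal is correct and follows essentially the same route as the paper: identify $\mu$ via Lemma \ref{wedge_product-operators_Correspondence_Lemma} with a degree $0$ element of $\cin(\M)\otimes\Gamma(\underline{\Hom}(\E,\underline{F}))$, observe that the decomposition terminates at $\mu_0+\mu_1+\mu_2+\mu^b$ because $\underline{\Hom}^{-p}(\E,\underline{F})=0$ for $p\geq 3$, and then read off the three relations by expanding $\mu\circ\D_{\E}=\D_{\underline{F}}\circ\mu$ component by component. The paper's own proof is in fact terser than yours, leaving the degree count and the sign bookkeeping implicit.
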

\begin{proof}
  As before it  suffices to check how $\mu$ acts on $\Gamma(\E)$, by the
  same arguments. Then it must be of the type
	\[
	\mu = \mu_0 + \mu_1 + \mu_2 + \mu^b,
	\]
	where
        $\mu_i\in\Omega^{i}(Q,\underline{\Hom}^{-i}(\E,\underline{F}))$
        and
        $\mu^b\in\Gamma(B)\otimes\Gamma(\underline{\Hom}^{-2}(\E,\underline{F}))$. It
        is easy to see that the three equations in the statement come from
        the expansion of
        $\mu\circ\D_{\E} = \D_{\underline{F}}\circ\mu$ when $\mu$ is
        written in terms of the components defined before.
\end{proof}

The transformation of $\ad \in \underline{\text{Rep}}^\infty(\M)$ for a fixed splitting
$Q[1]\oplus B^*[2]$ of $\M$ and different choices of $TM$-connections
is given by their difference. More precisely, let $\nabla$ and
$\nabla'$ be the two $TM$-connections. Then
 the map
$\mu=\mu_{\nabla'}^{-1}\circ \mu_\nabla\colon\ad_\nabla\to \ad_{\nabla'}$ is defined by
$\mu = \mu_0 + \mu_1 + \mu^b$, where
\begin{align*}
\mu_0 = &\ \id \\
\mu_1(q)X = &\ \nabla'_X q - \nabla_X q \\
\mu^b(\beta)X = &\ \nabla'_X \beta - \nabla_X \beta,
\end{align*}
for $X\in\mathfrak{X}(M)$, $q\in\Gamma(Q)$ and $\beta\in\Gamma(B^*)$.
The equations in Proposition \ref{morphism_of_3-term_representations}
are automatically satisfied since by construction
\[
  \D_{\ad_{\nabla'}}\circ\mu=\D_{\ad_{\nabla'}}\circ\mu_{\nabla'}^{-1}\circ\mu_\nabla=\mu_{\nabla'}^{-1}\circ\ldr{\Q}\circ\mu_\nabla=\mu_{\nabla'}^{-1}\circ\mu_\nabla\circ
  \D_{\ad_{\nabla}}=\mu\circ \D_{\ad_{\nabla}}.
  \]
  This yields the following result.

\begin{proposition}\label{Isomorphism with change of connections}
  Given two pairs of $TM$-connections on the bundles $B^*$ and $Q$,
  the isomorphism $\mu\colon\ad_\nabla\to \ad_{\nabla'}$ between the
  corresponding adjoint representations is given by
  $\mu=\id \oplus \Big( \nabla'-\nabla \Big)$.
\end{proposition}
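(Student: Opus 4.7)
The plan is to confirm directly that the map $\mu := \mu_{\nabla'}^{-1}\circ\mu_\nabla$, as constructed in the preceding paragraph, is the asserted isomorphism. This splits into two tasks: showing that $\mu$ is a morphism (in fact isomorphism) of 3-representations, and identifying its components $(\mu_0,\mu_1,\mu_2,\mu^b)$ in the sense of Proposition \ref{morphism_of_3-term_representations}.

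For the first task, recall that both $\mu_\nabla$ and $\mu_{\nabla'}$ are isomorphisms of sheaves of $\cin(\M)$-modules, so their composition $\mu$ is such an isomorphism as well. Since the differentials $\D_{\ad_\nabla}$ and $\D_{\ad_{\nabla'}}$ were defined tautologically as the pull-backs of $\ldr{\Q}$ along $\mu_\nabla$ and $\mu_{\nabla'}$, the intertwining identity $\D_{\ad_{\nabla'}}\circ\mu = \mu\circ\D_{\ad_\nabla}$ holds for free, so the equations in Proposition \ref{morphism_of_3-term_representations} are automatically satisfied and $\mu$ is a $0$-isomorphism of 3-representations.

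For the second task, I would compute $\mu$ on the sheaf generators $\beta\in\Gamma(B^*)$, $q\in\Gamma(Q)$, and $X\in\mathfrak{X}(M)$. By the very definition of $\mu_\nabla$ and $\mu_{\nabla'}$, both maps send $\beta$ to $\hat\beta$ and $q$ to $\hat q$, so $\mu(\beta)=\beta$ and $\mu(q)=q$. This forces $\mu_0=\id$ on $Q\oplus B^*$, and rules out any contribution of $\mu_1,\mu_2,\mu^b$ on these generators for degree reasons. For $X\in\mathfrak{X}(M)$, however,
\[
\mu(X) = X + \mu_{\nabla'}^{-1}\Bigl((\nabla^{B^*} - \nabla'^{B^*})_X \oplus (\nabla^Q - \nabla'^Q)_X\Bigr),
\]
and the vertical vector field on the right vanishes on $C^\infty(M)$ and is $C^\infty(M)$-linear on the remaining generators $\Gamma(Q^*)$ and $\Gamma(B)$.

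The only genuine computation is to re-express this vertical vector field through $\mu_{\nabla'}^{-1}$ as a $\cin(\M)$-linear combination of the $\hat q$'s and $\hat\beta$'s. By Lemma \ref{wedge_product-operators_Correspondence_Lemma} and the standard contraction identification of $C^\infty(M)$-linear endomorphisms with elements of the endomorphism bundle, the $Q$-summand of the difference identifies with the element of $\Omega^1(Q,\Hom(TM,Q))$ given by $(q,X)\mapsto(\nabla'-\nabla)_X q$, which is precisely $\mu_1$, while the $B^*$-summand identifies with the element of $\Gamma(B)\otimes\Hom(TM,B^*)$ given by $(\beta,X)\mapsto(\nabla'-\nabla)_X\beta$, which is precisely $\mu^b$. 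The component $\mu_2$ is zero. This yields the formula $\mu=\id\oplus(\nabla'-\nabla)$. The only (minor) obstacle is the sign-bookkeeping when identifying vertical vector fields with sections of $\underline{\End}(\ad)$ via the correspondence lemma; once that identification is set up, the formula reads off immediately and no separate verification of the relations in Proposition \ref{morphism_of_3-term_representations} is needed.
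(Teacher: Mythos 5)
Your proposal is correct and follows essentially the same route as the paper: define $\mu=\mu_{\nabla'}^{-1}\circ\mu_\nabla$, note that the intertwining with the differentials is tautological because $\D_{\ad_\nabla}$ and $\D_{\ad_{\nabla'}}$ are by construction the pull-backs of $\ldr{\Q}$, and read off the components $\mu_0=\id$, $\mu_1=(\nabla'-\nabla)$ on $Q$, $\mu^b=(\nabla'-\nabla)$ on $B^*$, $\mu_2=0$ by evaluating on the generators. Your computation of $\mu$ on $X\in\mathfrak{X}(M)$ and the identification of the resulting vertical, $C^\infty(M)$-linear vector field with the stated elements (including the sign flip coming from dualisation) is exactly the content the paper leaves implicit.
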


The next step is to show how the adjoint representation transforms
after a change of splitting of the Lie 2-algebroid. Fix a Lie
2-algebroid $(\M,Q)$ over the smooth manifold $M$ and choose a
splitting $Q[1]\oplus B^*[2]$, with structure objects
$(\ell,\rho,[\cdot\,,\cdot]_1,\nabla^1,\omega^1)$ as before. Recall
that a change of splitting does not change the vector bundles $B^*$
and $Q$, and it is equivalent to a section
$\sigma\in\Omega^2(Q,B^*)$. The induced isomorphism of [2]-manifolds
over the identity on $M$ is given by:
$\mathcal{F}_\sigma^\star(\tau) = \tau$ for all $\tau\in\Gamma(Q^*)$
and
$\mathcal{F}^\star_\sigma(b) = b + \sigma^\star
b\in\Gamma(B)\oplus\Omega^2(Q)$ for all $b\in\Gamma(B)$. If
$(\ell,\rho,[\cdot\,,\cdot]_2,\nabla^2,\omega^2)$ is the structure
objects of the second splitting, then the compatibility of $\sigma$
with the homological vector fields reads the following:
\begin{itemize}
	\item The skew-symmetric dull brackets are related by $[q_1,q_2]_2 = [q_1,q_2]_1 - \ell(\sigma(q_1,q_2))$.
	\item The connections are related by
          $\nabla^2_q b = \nabla^1_q b + \partial_B\langle
          \sigma(q,\cdot),b \rangle$, or equivalently on the dual by
          $\nabla^{2*}_q \beta = \nabla^{1*}_q \beta -
          \sigma(q,\ell(\beta))$.
	\item The curvature terms are related by
          $\omega^2 = \omega^1 + \diff_{2,\nabla^1}\sigma$, where the
          operator
	\[
          \diff_{2,\nabla^1}\sigma\colon \Omega^\bullet(Q,B^*)\to\Omega^{\bullet+1}(Q,B^*)\]
        is defined by the usual Koszul formula using the dull bracket
        $[\cdot\,,\cdot]_2$ and the connection $\nabla^{1*}$.
\end{itemize}
The above equations give the following identities between the
structure data for the adjoint representations\footnote{Note that the
  two pairs of $TM$-connections are identical} $\ad_\nabla^1$ and
$\ad_\nabla^2$.

\begin{lemma}\label{Identities_for_different_splitting_of_Lie_2-algebroid}
  Let $q,q_1,q_2\in\Gamma(Q),\beta\in\Gamma(B^*)$ and
  $X\in\mathfrak{X}(M)$. Then
	\begin{enumerate}[(i)]
		\item $\ell_2 = \ell_1$ and $\rho_2 = \rho_1$.
		\item $\nabla^{2,\text{bas}}_{q_1} q_2 = \nabla^{1,\text{bas}}_{q_1} q_2 - \ell(\sigma(q_1,q_2))$
		
		$\nabla^{2,\text{bas}}_{q} X = \nabla^{1,\text{bas}}_{q} X$
		
		$\nabla^{2,*}_{q} \beta = \nabla^{1,*}_{q} \beta - \sigma(q,\ell(\beta))$.
              \item $\omega_2^2(q_1,q_2)q_3 = \omega_2^1(q_1,q_2)q_3 + \diff_{2,\nabla^1}\sigma(q_1,q_2,q_3)$\\
		$\omega_2^2(q_1,q_2)X = \omega_2^1(q_1,q_2)X +
                \nabla_X(\ell(\sigma(q_1,q_2))) -
                \ell(\sigma(q_1,\nabla_X q_2)) +
                \ell(\sigma(q_2,\nabla_X q_1))$.
		\item $\omega_3^2(q_1,q_2,q_3)X = \omega_3^1(q_1,q_2,q_3)X +  (\nabla_X(\diff_{2,\nabla^1}\sigma))(q_1,q_2,q_3)$.
		\item $\phi_0^2(\beta)q = \phi_0^1(\beta)q + \sigma(q,\ell(\beta))$\\
		$\phi_0^2(\beta)X = \phi_0^1(\beta)X$.
        \item $\phi_1^2(\beta,q)X = \phi_1^1(\beta,q)X -
                \sigma(\nabla_X q_1,\ell(\beta)) -
                \sigma(q,\ell(\nabla_X \beta)) +
                \nabla_X(\sigma(q,\ell(\beta)))$.
	\end{enumerate}
\end{lemma}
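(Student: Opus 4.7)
The plan is to verify the six identities by straightforward substitution: each structure object of the adjoint representation from Proposition \ref{Adjoint_representation_of_Lie_2-algebroid} is a polynomial expression in the data $(\ell,\rho,[\cdot\,,\cdot],\nabla^*,\omega)$ of the splitting, so the three compatibility relations relating the two splittings (namely $[q_1,q_2]_2=[q_1,q_2]_1-\ell(\sigma(q_1,q_2))$, $\nabla^{2*}_q\beta=\nabla^{1*}_q\beta-\sigma(q,\ell(\beta))$, and $\omega^2=\omega^1+\diff_{2,\nabla^1}\sigma$) suffice to deduce all six statements. Item (i) is immediate since the vector bundle map $\ell$ and the anchor $\rho$ are intrinsic to $\M$ and independent of the splitting.

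For (ii), I would apply the definitions
\[
\nabla^{\mathrm{bas},Q}_{q_1}q_2=[q_1,q_2]+\nabla_{\rho(q_2)}q_1,\qquad \nabla^{\mathrm{bas},TM}_qX=[\rho(q),X]+\rho(\nabla_Xq).
\]
Since only the bracket differs between the two splittings, and $\rho\circ\ell=0$ (this follows from $\rho_2=\rho_1$ and $\ell$ being the 1-bracket), the $TM$ basic connection is unchanged, while the $Q$ basic connection picks up exactly the correction $-\ell(\sigma(q_1,q_2))$. The $Q$-connection on $B^*$ is the dual of $\nabla^*$, so its change is directly the third compatibility relation above.

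For (iii) and (iv), I would apply the definitions $\omega_2(q_1,q_2)q_3=-\omega(q_1,q_2,q_3)$, $\omega_2(q_1,q_2)X=-R_\nabla^{\mathrm{bas}}(q_1,q_2)X$ and $\omega_3(q_1,q_2,q_3)X=-(\nabla_X\omega)(q_1,q_2,q_3)$. The first part of (iii) follows immediately from $\omega^2=\omega^1+\diff_{2,\nabla^1}\sigma$. The $TM$-part is a computation expanding $R_{\nabla^{\mathrm{bas},Q,2}}-R_{\nabla^{\mathrm{bas},Q,1}}$ using (ii); this is the main calculational step, since one must carefully track how the basic curvature changes when one modifies the bracket by $-\ell\circ\sigma$. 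Item (iv) then follows from (iii) by differentiating with $\nabla_X$ and using that $\nabla$ on $B^*$ is unchanged.

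For (v) and (vi), I would use the explicit formulas for $\phi_0$ and $\phi_1$ in terms of the dual connection $\nabla^*$ and the $TM$-connection on $B^*$. Since only $\nabla^*$ changes (by $-\sigma(\cdot,\ell(\cdot))$), item (v) follows directly from the formula $\phi_0(\beta)q=\nabla_{\rho(q)}\beta-\nabla^*_q\beta$ (the $TM$-component is visibly unchanged because $\nabla$ on $B^*$ is the same). For (vi), substituting the new $\nabla^{2*}$ in
\[
\phi_1(\beta,q)X=\nabla_X\nabla^*_q\beta-\nabla^*_q\nabla_X\beta-\nabla^*_{\nabla_Xq}\beta+\nabla_{\nabla^{\mathrm{bas}}_qX}\beta
\]
produces the claimed correction terms; the $\nabla^{\mathrm{bas},TM}$-term contributes nothing new by (ii). The main obstacle is bookkeeping in the $\omega_2$ computation of (iii), where one must identify $\nabla_X(\ell(\sigma(q_1,q_2)))-\ell(\sigma(q_1,\nabla_Xq_2))+\ell(\sigma(q_2,\nabla_Xq_1))$ as exactly the discrepancy between the two basic curvatures; once this is done, (iv) and (vi) follow by the same mechanism applied at one higher order.
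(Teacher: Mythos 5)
Your proposal is correct and matches the paper's (implicit) argument exactly: the paper offers no separate proof of this lemma, simply asserting that the three transformation rules for the dull bracket, the connection $\nabla^*$, and the curvature term $\omega$ yield the identities upon substitution into the explicit formulas of Proposition \ref{Adjoint_representation_of_Lie_2-algebroid}, which is precisely your plan, including the observation that the only nontrivial bookkeeping is the change of the basic curvature in item (iii). One tiny quibble: the identity $\rho\circ\ell=0$ you invoke comes from $\Q^2=0$ (the adjoint complex being a complex), not from $\rho_2=\rho_1$, and it is not actually needed for $\nabla^{\mathrm{bas},TM}$ since that connection involves neither the dull bracket nor $\omega$.
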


Consider now two Lie $n$-algebroids $\M_1$ and $\M_2$ over $M$, and
an isomorphism
\[
\mathcal{F}\colon(\M_1,\Q_1)\to(\M_2,\Q_2)
\]
given by the maps
$\mathcal{F}_Q\colon Q_1\to Q_2$,
$\mathcal{F}_B\colon B_1^*\to B^*_2$, and
$\mathcal{F}_0\colon\wedge^2Q_1\to B_2^*$.
Recall that a 0-morphism between two representations up to homotopy
$(\E_1,\D_1)$ and $(\E_2,\D_2)$ of $\M_1$ and $\M_2$, respectively, is
given by a degree 0 map
\[
\mu\colon \cin(\M_2)\otimes\Gamma(\E_2)\to \cin(\M_1)\otimes\Gamma(\E_1),
\]
which is $\cin(\M_2)$-linear:
$\mu(\xi\otimes e) = \mathcal{F}^\star\xi\otimes\mu(e)$ for all
$\xi\in \cin(\M_2)$ and $e\in\Gamma(\E_2)$, and makes the following
diagram commute
\[
\xymatrix{
	\cin(\M_2)\otimes\Gamma(\E_2)\ar[r]^{\mu}\ar[d]_{\D_2} & \cin(\M_1)\otimes\Gamma(\E_1)\ar[d]^{\D_1} \\
	\cin(\M_2)\otimes\Gamma(\E_2)\ar[r]_\mu &
\cin(\M_1)\otimes\Gamma(\E_1).
}
\]
The usual analysis as before implies that $\mu$ must be given by a
morphism of complexes $\mu_0\colon (\E_2,\partial_2)\to (\E_1,\partial_1)$ and
elements
\[
\mu_1\in\Omega^1(Q_1,\underline{\Hom}^{-1}(\E_2,\E_1)),
\]
\[
\mu_2\in\Omega^2(Q_1,\underline{\Hom}^{-2}(\E_2,\E_1)),
\]
\[
\mu^b\in \Gamma(B)\otimes\Gamma(\underline{\Hom}^{-2}(\E_2,\E_1)),
\]
which satisfy equations similar to the set of equations in Proposition \ref{morphism_of_3-term_representations}.

A change of splitting of the Lie 2-algebroid transforms as follows the
adjoint representation.  Since changes of choices of connections are
now fully understood, choose the same connection for both splittings
$\M_1\simeq Q[1]\oplus B^*[2]\simeq\M_2$. Suppose that $\sigma\in\Omega^2(Q,B^*)$ is the
change of splitting and denote by $\mathcal{F}_\sigma$ the induced
isomorphism of the split Lie 2-algebroids whose components are given
by
$\mathcal{F}^\star_{\sigma,Q}=\id_{Q^*}, \mathcal{F}^\star_{\sigma,B}=\id_B,
\mathcal{F}^\star_{\sigma,0}=\sigma^\star$. The composition map $\mu^\sigma:\ad_\nabla^1\to\mathfrak{X}(\M)\to\ad_{\nabla}^2$ is given in components by
\begin{align*}
\mu_0^\sigma = &\ \id \\
\mu_1^\sigma(q_1)q_2 = &\ \sigma(q_1,q_2) \\
\mu_2^\sigma(q_1,q_2)X = &\ (\nabla_X \sigma)(q_1,q_2).
\end{align*}
A similar argument as before implies that $\mu^\sigma$ is a morphism between the two adjoint
representations and therefore the following result follows.

\begin{proposition}\label{Isomorphism with change of splitting}
  Given two splittings of a Lie 2-algebroid with induced change of
  splitting $\sigma\in\Omega^2(Q,B^*)$ and a pair of $TM$-connections
  on the vector bundles $B^*$ and $Q$, the isomorphism between the
  corresponding adjoint representations is given by
  $\mu=\id\oplus\ \sigma\oplus\nabla_\cdot\sigma$.
\end{proposition}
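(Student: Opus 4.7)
The plan is to exploit the fact that both adjoint representations are, by construction of Section \ref{adjoint_module_adjoint_representation_isomorphism}, isomorphic to the adjoint module $(\mathfrak{X}(\M), \ldr{\Q})$ via the $\cin(\M)$-linear maps $\mu_{\nabla,1}$ and $\mu_{\nabla,2}$, one for each splitting. Setting $\mu^\sigma := \mu_{\nabla,2}^{-1} \circ \mu_{\nabla,1}$ therefore automatically produces a morphism of sheaves of $\cin(\M)$-modules that intertwines the differentials $\D_{\ad_\nabla^1}$ and $\D_{\ad_\nabla^2}$, since both $\mu_{\nabla,i}$ intertwine them with $\ldr{\Q}$. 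In particular $\mu^\sigma$ is a $0$-isomorphism of 3-representations, and the identities of Proposition \ref{morphism_of_3-term_representations} hold for free. Everything is reduced to computing the components of $\mu^\sigma$ on the generators $\beta\in\Gamma(B^\ast)$, $q\in\Gamma(Q)$ and $X\in\mathfrak{X}(M)$.

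Next I would evaluate $\mu^\sigma$ on each type of generator, using the explicit formula $\mathcal{F}^\star_\sigma(\tau) = \tau$ on $\Gamma(Q^\ast)$ and $\mathcal{F}^\star_\sigma(b) = b + \sigma^\star b$ on $\Gamma(B)$ given in the paragraph on changes of splitting. For $\beta\in\Gamma(B^\ast)$, the derivation $\hat{\beta}$ depends only on the pairing with $\Gamma(B)$ in degree $2$, and since $\hat{\beta}$ applied to $\sigma^\star b$ vanishes for degree reasons, the map is the identity: $\mu^\sigma(\beta) = \beta$. For $q \in \Gamma(Q)$, the derivation $\hat{q}$ of the second splitting, read as a derivation on $\cin(\M_1)$ via $\mathcal{F}^\star_\sigma$, still acts as $\hat{q}$ on $\Gamma(Q^\ast)$, but its action on $b \in \Gamma(B)$ now produces an extra contribution $\hat{q}(\sigma^\star b) = \langle \sigma(q,\cdot),b\rangle \in \Gamma(Q^\ast)$; after reading this back in $\cin(\M)\otimes\Gamma(\ad_\nabla^2)$ via $\mu_{\nabla,2}^{-1}$, the correction is precisely the element of $\Omega^1(Q,B^\ast)$ given by $\sigma(q,\cdot)$, yielding $\mu_1^\sigma(q_1)q_2 = \sigma(q_1,q_2)$. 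For $X \in \mathfrak{X}(M)$, the horizontal vector field $\nabla^{B^\ast}_X \oplus \nabla^Q_X$ is independent of the splitting on $\Gamma(Q^\ast)$, but its Leibniz action on $\mathcal{F}^\star_\sigma(b) = b + \sigma^\star b$ produces an additional $\nabla_X(\sigma^\star b)$, which by the definition of $\diff_\nabla$ is read as $(\nabla_X\sigma)^\star b \in \Omega^2(Q)$; pushing this back through $\mu_{\nabla,2}^{-1}$ contributes the $\mu_2^\sigma$ term $(q_1,q_2)\mapsto (\nabla_X\sigma)(q_1,q_2)$.

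The reason no $\mu_1^\sigma$ correction shows up on $X$ and no $\mu^b$ component shows up at all is that $\mathcal{F}^\star_\sigma$ is the identity on $\Gamma(Q^\ast)$: the only source of new contributions is the $\sigma^\star b$ term on generators of degree $2$, which can only produce corrections landing in $B^\ast$. Collecting these three computations gives exactly $\mu^\sigma = \id \oplus \sigma \oplus \nabla_\cdot\sigma$ as claimed.

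The only real obstacle is bookkeeping signs and degrees when tracing $\hat{q}$ and $\nabla^{B^\ast}_X\oplus\nabla^Q_X$ through $\mathcal{F}^\star_\sigma$ and then back through $\mu_{\nabla,2}^{-1}$; the verification that the result satisfies the constraints of Proposition \ref{morphism_of_3-term_representations} requires no extra work, being implied by the identity $\D_{\ad_\nabla^2}\circ\mu^\sigma = \mu^\sigma\circ\D_{\ad_\nabla^1}$ that follows tautologically from the factorisation through $\mathfrak{X}(\M)$.
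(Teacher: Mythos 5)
Your proposal is correct and follows essentially the same route as the paper: the paper also defines $\mu^\sigma$ as the composition $\ad_\nabla^1\to\mathfrak{X}(\M)\to\ad_\nabla^2$ through the adjoint module, observes that the intertwining of the differentials is then automatic (exactly as in the change-of-connection case), and reads off the components $\id$, $\sigma$, $\nabla_\cdot\sigma$ from the action of $\mathcal{F}_\sigma^\star$ on generators. Your identification of the $\sigma^\star b$ term on degree-$2$ generators as the sole source of corrections is precisely the computation the paper leaves implicit.
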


\subsection{Adjoint representation of a Lie
  $n$-algebroid}\label{Adjoint of Lie n-algebroids}

The construction of the adjoint representation up to homotopy of a Lie
$n$-algebroid $(\M,\Q)$ for general $n$ is similar to the $n=2$
case. Specifically, choose a splitting $\M\simeq \bigoplus_{i=1}^n E_i[i]$
and $TM$-connections $\nabla^{E_i}$ on the bundles $E_i$. Then there is an
induced isomorphism of $\cin(\M)$-modules
\begin{align*}
  \mu_\nabla\colon \cin(\M)\otimes\Gamma(TM[0]\oplus E_1[1]\oplus\ldots\oplus E_n[n]) & \to \mathfrak{X}(\M),
\end{align*}
which at the level of generators is given by
\begin{align*}
  \Gamma(E_i)\ni e & \mapsto \hat{e} \quad \text{ and } \quad \mathfrak{X}(M)\ni X \mapsto \nabla^{E_n}_X \oplus \ldots \oplus \nabla^{E_1}_X.
\end{align*}
Then  $\mu$ is used to transfer $\ldr{\Q}$ from $\mathfrak{X}(\M)$
to obtain the differential
$\D_{\ad_\nabla} := \mu^{-1}\circ\ldr{\Q}\circ\mu$ on
$\cin(\M)\otimes\Gamma(TM[0]\oplus E_1[1]\oplus\ldots\oplus E_n[n]) $.

\section{Split VB-Lie $n$-algebroids}\label{VB-Lie
  n-algebroids}\label{Split VB-Lie n-algebroids}

This section gives a picture of representations up to homotopy in more
``classical'' geometric terms. That is, in terms of linear Lie
$n$-algebroid structures on double vector bundles. It introduces the
notion of split VB-Lie $n$-algebroids and explains how they
correspond to $(n+1)$-representations of Lie $n$-algebroids. In
particular, the tangent of a Lie $n$-algebroid is a VB-Lie
$n$-algebroid which is linked to the adjoint representation. The
main result in this section is a generalisation of the correspondence
between decomposed VB-algebroids and $2$-representations in
\cite{GrMe10}.

\subsection{Double vector bundles}

Recall that a double vector bundle $(D,V,F,M)$ is a commutative diagram
\[
\begin{tikzcd}
D \arrow[d,"\pi_V"'] \arrow[r,"\pi_F"] & F \arrow[d,"q_F"] \\
V \arrow[r,"q_V"']                       & M           
\end{tikzcd}
\]
such that all the arrows are vector bundle projections and the
structure maps of the bundle $D\to V$ are bundle morphisms over the
corresponding structure maps of $F\to M$ (see \cite{Mackenzie05}). This
is equivalent to the same condition holding for the structure maps of $D\to F$
over $V\to M$. The bundles $V$ and $F$ are called the side bundles of
$D$. The intersection of the kernels
$C:=\pi_V^{-1}(0^V)\cap\pi_F^{-1}(0^F)$ is the $\mathit{core}$ of $D$
and is naturally a vector bundle over $M$, with projection denoted by
$q_C\colon C\to M$. The inclusion $C\hookrightarrow D$ is denoted by
$C_m\ni c_m\mapsto\overline{c}\in
\pi_V^{-1}(0^V_m)\cap\pi_F^{-1}(0^F_m)$.

A morphism $(G_D,G_V,G_F,g)$ of two double vector bundles $(D,V,F,M)$ and $(D',V',F',M')$ is a commutative cube
\[
\begin{tikzcd}
& D \arrow[dl, "G_D"] \arrow[rr] \arrow[dd] & & F \arrow[dl, "G_F"] \arrow[dd] \\
D' \arrow[rr, crossing over] \arrow[dd] & & F' \\
& V \arrow[dl, "G_V"] \arrow[rr] & & M \arrow[dl, "g"] \\
V' \arrow[rr] & & M' \arrow[from=uu, crossing over]
\end{tikzcd}
\]
such that all the faces are vector bundle maps. 

Given a double vector bundle $(D,V,F,M)$, the space of sections of $D$
over $V$, denoted by $\Gamma_V(D)$, is generated as a
$C^\infty(V)$-module by two special types of sections, called
\textit{core} and \textit{linear} sections and denoted by
$\Gamma_V^c(D)$ and $\Gamma^l_V(D)$, respectively (see \cite{Mackenzie05}). The
core section $c^\dagger\in\Gamma_V^c(D)$ corresponding to
$c\in\Gamma(C)$ is defined as
\[
c^\dagger(v_m) = 0_{v_m}^D +_F \overline{c(m)},\, \text{ for }\, m\in M \, \text{ and }\, v_m\in V_m. 
\]
A section $\delta\in\Gamma_V(D)$ is linear over $f\in\Gamma(F)$, if $\delta\colon V\to D$ is a
vector bundle morphism  $V\to D$ over $f\colon M\to F$.

Finally, a section $\psi\in\Gamma(V^*\otimes C)$ defines a linear
section $\psi^\wedge\colon V\to D$ over the zero section $0^F\colon M\to F$ by
\[
\psi^\wedge(v_m) = 0_{v_m}^D +_F \overline{\psi(v_m)} 
\]
for all $m\in M$ and $v_m\in V_m$. This type of linear section
is called a \textit{core-linear} section. In terms of the generators
$\theta\otimes c\in\Gamma(V^*\otimes C)$, the correspondence above reads
$(\theta\otimes c)^\wedge=\ell_\theta\cdot c^\dagger$, where
$\ell_\theta$ is the linear function on $V$ associated to
$\theta\in\Gamma(V^*)$.

\begin{example}[Decomposed double vector bundle]\label{Example decomposed DVB}
  Let $V,F,C$ be vector bundles over the same manifold $M$. Set
  $D:=V\times_M F\times_M C$ with vector bundle structures
  $D=q_V^!(F\oplus C)\to V$ and $D=q_F^!(V\oplus C)\to F$. Then
  $(D,V,F,M)$ is a double vector bundle, called the decomposed double
  vector bundle with sides $V$ and $F$ and with core $C$. Its core
  sections have the form $c^\dagger\colon f_m\mapsto(0^V_m,f_m,c(m))$, for
  $m\in M,f_m\in F_m$ and $c\in\Gamma(C)$, and the space of linear
  sections $\Gamma_V^l(D)$ is naturally identified with
  $\Gamma(F)\oplus\Gamma(V^*\otimes C)$ via 
  $(f,\psi)\colon v_m\mapsto(f(m),v_m,\psi(v_m))$ where
  $\psi\in\Gamma(V^*\otimes C)$ and $f\in\Gamma(F)$. This yields the
  canonical \textit{linear horizontal lift}
  $h\colon \Gamma(F)\hookrightarrow\Gamma_V^l(D)$.
\end{example}

\begin{example}[Tangent bundle of a vector bundle]
  Given a vector bundle $q\colon E\to M$, its tangent bundle $TE$ is
  naturally a vector bundle over the manifold $E$. In addition,
  applying the tangent functor to all the structure maps of $E\to M$
  yields a vector bundle structure on $Tq\colon TE\to TM$ which is
  called the \textit{tangent prolongation} of $E$. Hence,
  $(TE,TM,E,M)$ has a natural double vector bundle structure with
  sides $TM$ and $E$. Its core is naturally identified with $E\to M$
  and the inclusion $E\hookrightarrow TE$ is given by
  $E_m\ni e_m\mapsto\left.\frac{d}{dt}\right|_{t=0}te_m\in
  T^q_{0_m^E}E$. For $e\in\Gamma(E)$, the section
  $Te\in\Gamma_{TM}^l(TE)$ is linear over $e$. The  core
  vector field $e^\dagger \in\Gamma_{TM}(TE)$ is defined by
  $e^\dagger(v_m)=T_m0^E(v_M)+_{E}\left.\frac{d}{dt}\right.\arrowvert_{t=0}te(m)$
    for $m\in M$ and $v_m\in T_ MM$ and the \textit{vertical
    lift} $e^\uparrow\in \Gamma_E(TE)=\mathfrak{X}(E)$ is the (core)
  vector field defined by the flow
  $\mathbb{R}\times E\to E,(t,e'_m)\mapsto e'_m + te(m)$. Elements of
    $\Gamma_E^l(TE)=:\mathfrak{X}^l(E)$ are called \textit{linear
      vector fields} and are equivalent to derivations
    $\delta\colon \Gamma(E)\to\Gamma(E)$ over some element in
    $\mathfrak{X}(M)$ \cite{Mackenzie05}. The linear vector field
    which corresponds to the derivation $\delta$ is written
    $X_\delta$.
\end{example}

\subsection{Linear splittings, horizontal lifts and duals}
A \textit{linear splitting} of a double vector bundle $(D,V,F,M)$ with
core $C$ is a double vector bundle embedding $\Sigma$ of the
decomposed double vector bundle $V\times_M F$ into $D$ over the
identities on $V$ and $F$. It is well-known that every double vector
bundle admits a linear splitting, see
\cite{GrRo09,delCarpio-Marek15,Pradines77} or \cite{HeJo18} for the
general case. Moreover, a linear splitting is equivalent to a
\textit{decomposition} of $D$, i.e.~to an isomorphism of double vector
bundles $S:V\times_M F\times_M C\to D$ over the identity on $V, F$ and
$C$. Given $\Sigma$, the decomposition is obtained by setting
$S(v_m,f_m,c_m)=\Sigma(v_m,f_m) +_F (0_{f_m} +_V \overline{c_m})$, and
conversely, given $S$, the splitting is defined by
$\Sigma(v_m,f_m)=S(v_m,f_,,0_m^C)$.

A linear splitting of $D$, and consequently a decomposition, is also
equivalent to a \textit{horizontal lift}, i.e.~a right splitting of
the short exact sequence
\[
0\to\Gamma(V^*\otimes C)\to \Gamma_V^l(D)\to \Gamma(F)\to 0
\]
of $C^\infty(M)$-modules.  The correspondence is given by
$\sigma_F(f)(v_m)=\Sigma(f(m),b_m)$ for $f\in\Gamma(F)$, $m\in M$ and
$b_m\in B(m)$. Note that all the previous constructions can be done
similarly if one interchanges the roles of $V$ and $F$.

\begin{example}
  For the tangent bundle $TE$ of a vector bundle $E\to M$, a linear
  splitting is equivalent to a choice of a $TM$-connection on
  $E$. Specifically, given a horizontal lift
  $\sigma\colon \mathfrak{X}(M)\to\mathfrak{X}^l(E)$, the corresponding
  connection $\nabla$ is defined by $\sigma(Y) = X_{\nabla_Y}$ for all $Y\in\mx(M)$.
\end{example}

Double vector bundles can be dualized in two ways, namely, as the dual
of $D$ either over $V$ or over $F$ \cite{Mackenzie05}. Precisely, from a double vector
bundle $(D,V,F,M)$ with core $C$, one obtains the double vector
bundles
\begin{center}
	\begin{tabular}{l r}
		\begin{tikzcd}
		D^*_V \arrow[r, "\pi_{C^*}"] \arrow[d, "\pi_V"'] & C^* \arrow[d, "q_{C^*}"] \\
		V \arrow[r, "q_V"']                               & M                       
		\end{tikzcd}
		&
		\begin{tikzcd}
		D^*_F \arrow[r, "\pi_F"] \arrow[d, "\pi_{C^*}"'] & F \arrow[d, "q_F"] \\
		C^* \arrow[r, "q_{C^*}"']                               & M                       
		\end{tikzcd}
	\end{tabular}
\end{center}
with cores $F^*$ and $V^*$, respectively.

Given a linear splitting $\Sigma\colon V\times_M F\to D$, the dual splitting
$\Sigma^*\colon V\times_M C^*\to D^*_V$ is defined by
\begin{center}
	\begin{tabular}{l c r}
          $\left\langle \Sigma^*(v_m,\gamma_m),\Sigma(v_m,f_m) \right\rangle = 0$ &
                                                                                    and $\left\langle \Sigma^*(v_m,\gamma_m), c^\dagger(v_m) \right\rangle = \left\langle \gamma_m,c(m) \right\rangle$,
	\end{tabular}
\end{center}
for all $m\in M$ and  $v_m\in V_m$, $f_m\in F_m$, $\gamma_m\in C^*_m$, $c\in\Gamma(C)$.

\subsection{VB-Lie $n$-algebroids and $(n+1)$-representations}

Suppose now that $(\underline{D},V,\A,M)$ is a double vector bundle
together with graded vector bundle decompositions
$\underline{D}=D_1[1]\oplus\ldots\oplus D_n[n]$ and
$\A=A_1[1]\oplus\ldots\oplus A_n[n]$, over $V$ and $M$, respectively, which are compatible with the
projection $\underline{D}\to\A$. This means that each of the
individual squares $(D_i,V,A_i,M)$ also forms a double vector bundle.
Schematically, this yields the following sequence of diagrams
\[
\begin{tikzcd}
& D_1[1] \arrow[ldd] \arrow[rr] \arrow[d,symbol=\oplus] &   & A_1[1] \arrow[ldd, crossing over] \arrow[d,symbol=\oplus] \\
& D_2[2] \arrow[ld] \arrow[rr] \arrow[d,symbol=\oplus]  &   & A_2[2] \arrow[ld] \arrow[d,symbol=\oplus]  \\
V \arrow[rr] & \vdots                     & M & \vdots \\
& D_n[n] \arrow[lu] \arrow[rr] \arrow[u,symbol=\oplus]  &   & A_n[n] \arrow[lu] \arrow[u,symbol=\oplus] 
\end{tikzcd}
\]
where all the ``planes" are double vector bundles. This yields that
the core of $(\underline{D},V,\A,M)$ is the graded vector
bundle $\underline{C}=C_1[1]\oplus\ldots\oplus C_n[n]$, where $C_i$ is the
core of $(D_i,V,A_i,M)$, for  $i=1,\ldots,n$.

\begin{definition}\label{VB_lien}
	The quadruple $(\underline{D},V,\A,M)$ is a \emph{(split) VB-Lie
		$n$-algebroid} if 
	\begin{enumerate}
		\item the graded vector bundle $\underline{D}\to V$ 
		is endowed with a homological vector field
		$\Q_{\underline{D}}$, 
		\item the Lie $n$-algebroid structure of $\underline{D}\to V$
		is \textit{linear}, in the sense that
		\begin{enumerate}
			\item the anchor $\rho_D\colon D_1\to TV$ is a double vector bundle morphism,
			\item the map $\partial_{D_i}$ fits into a morphism
			of double vector bundles
			$(\partial_{D_i},\id_V,\partial_{A_i},\id_M)$ between $(D_i,V,A_i,M)$ and
			$(D_{i+1},V,A_{i+1},M)$ for all $i$,
			\item the multi-brackets of
			$\underline{D}$ satisfy the following relations:
			\begin{enumerate}
				\item the $i$-bracket of $i$ linear sections is a linear section;
				\item the $i$-bracket of $i-1$ linear sections with a core section is a core section;
				\item the $i$-bracket of $i-k$ linear sections with $k$ core sections, $i\geq k \geq 2$, is zero;
				\item the $i$-bracket of $i$ core sections is zero.
			\end{enumerate} 
		\end{enumerate} 
	\end{enumerate}
\end{definition}

\begin{remark}
	\begin{enumerate}
		\item A VB-Lie $n$-algebroid structure on the double vector bundle
		$(\underline{D},V,\A,M)$ defines uniquely a Lie
		$n$-algebroid structure on $\A\to M$ as follows:
		the anchor $\rho_D\colon D_1\to TV$ is linear over the anchor
		$\rho\colon A_1\to TM$, and 
		if all
		$d_k\in\Gamma_V^l(\underline{D})$ cover $a_{k}\in\Gamma(\A)$ for
		$k=1,2,\ldots,i$, then
		$\llbracket d_1,\ldots,d_i
		\rrbracket_{\underline{D}}\in\Gamma_V^l(\underline{D})$ covers
		$\llbracket a_1,\ldots,a_i \rrbracket_{\A}\in\Gamma(\A)$. Therefore, the graded vector bundles $\underline{D}\to V$ and $\A\to M$ are
		endowed with homological vector fields $\Q_{\underline{D}}$ and
		$\Q_{\A}$ for which the bundle projection $\underline{D}\to \A$ is a
		morphism of Lie $n$-algebroids over the projection $V\to M$.
        \item  A VB-Lie 1-algebroid as in the definition
            above is just a VB-algebroid.
            \end{enumerate}
\end{remark}

\begin{example}[Tangent prolongation of a (split) Lie $n$-algebroid]
  The basic example of a split VB-Lie $n$-algebroid is obtained by
  applying the tangent functor to a split Lie $n$-algebroid
  $\A=A_1[1]\oplus\ldots\oplus A_n[n]\to M$. The double vector
  bundle is given by the diagram
	\[
	\begin{tikzcd}
	\underline{TA} \arrow[d] \arrow[r] & \A \arrow[d] \\
	TM \arrow[r]                       & M           
	\end{tikzcd}
	\]
        where the Lie $n$-algebroid structure of
        $\underline{TA}=T\A=TA_1[1]\oplus\ldots\oplus TA_n[n]$ over
        the manifold $TM$ is defined by the relations
        \begin{enumerate}
        \item $\rho_{TA}=J_M\circ T_{\rho_A}\colon TA_1\to TTM$, where
          $J_M\colon TTM\to TTM$ is the canonical involution, see e.g.~\cite{Mackenzie05},
	\item $\llbracket Ta_{k_1},\ldots,Ta_{k_i}\rrbracket = T\llbracket a_{k_1},\ldots,a_{k_i}\rrbracket$,
	\item
          $\llbracket
          Ta_{k_1},\ldots,Ta_{k_{i-1}},a_{k_i}^\dagger\rrbracket =
          \llbracket
          a_{k_1},\ldots,a_{k_{i-1}},a_{k_i}\rrbracket^\dagger$,
	\item
          $\llbracket
          Ta_{k_1},\ldots,Ta_{k_j},a_{k_{j+1}}^\dagger,\ldots,a_{k_i}^\dagger\rrbracket
          = 0$ for all $1\le j\le i-2$,
	\item $\llbracket a_{k_1}^\dagger,\ldots,a_{k_i}^\dagger\rrbracket = 0$,
\end{enumerate}	
for all sections $a_{k_j}\in\Gamma(A_{k_j})$ with pairwise distinct $k_j$ and all $i$.
\end{example}

Applying the above construction to a split Lie 2-algebroid
$Q[1]\oplus B^*[2]\to M$ with structure
$(\rho_Q,\ell,\nabla^*,\omega)$ yields  as follows the objects
$(\rho_{TQ},T\ell,T\nabla^*,T\omega)$ of the split Lie 2-algebroid
structure of $TQ[1]\oplus TB^*[2]\to TM$: The complex
$TB^*\to TQ\to TTM$ consists of the anchor of $TQ$ given by
$\rho_{TQ}=J_M\circ T\rho_Q$, and the vector bundle map
$T\ell\colon TB^*\to TQ$. The bracket of $TQ$ is defined by the
relations
	\[
	[Tq_1,Tq_2]_{TQ} = T[q_1,q_2]_Q,\qquad 
	[Tq_1,q_2^\dagger]_{TQ} = [q_1,q_2]_Q^\dagger,\qquad 
	[q_1^\dagger,q_2^\dagger]_{TQ} = 0,
	\]
	for $q_1,q_2\in\Gamma(Q)$. The $TQ$-connection
        $T\nabla^*\colon
        \Gamma_{TM}(TQ)\times\Gamma_{TM}(TB^*)\to\Gamma_{TM}(TB^*)$ is
        defined by
	\[
	(T\nabla^*)_{Tq}(T\beta) = T(\nabla^*_q\beta),\qquad
	(T\nabla^*)_{Tq}(\beta^\dagger) = (\nabla^*_q\beta)^\dagger = (T\nabla^*)_{q^\dagger}\beta,\qquad
	(T\nabla^*)_{q^\dagger}(\beta^\dagger) = 0,
	\]
	for $q\in\Gamma(Q)$ and $\beta\in\Gamma(B^*)$. Finally, the 3-form $T\omega\in\Omega^3(TQ,TB^*)$ is defined by
	\[
	(T\omega)(Tq_1,Tq_2,Tq_3) = T(\omega(q_1,q_2,q_3)),\qquad
	(T\omega)(Tq_1,Tq_2,q_3^\dagger) = \omega(q_1,q_2,q_3)^\dagger,
	\]
	\[
	(T\omega)(q_1,q_2^\dagger,q_3^\dagger) = 0 =(T\omega)(q_1^\dagger,q_2^\dagger,q_3^\dagger),
	\]
	for $q_1,q_2,q_3\in\Gamma(Q)$. 
	
        As it is shown in \cite{GrMe10}, an interesting fact about the
        tangent prolongation of a Lie algebroid is that it encodes its
        adjoint representation. The same holds for a split Lie $n$-algebroid $A_1[1]\oplus\ldots\oplus A_n[n]$, since by definition the adjoint module is exactly the space of sections of the $\Q$-vector bundle $T(A_1[1]\oplus\ldots\oplus A_n[n])\to A_1[1]\oplus\ldots\oplus A_n[n]$. The next example shows this correspondence explicitly in the case of split Lie
        2-algebroids $Q[1]\oplus B^*[2]$.

\begin{example}
  Choose two $TM$-connections on $Q$ and $B^*$, both denoted by
  $\nabla$. These choices induce the horizontal lifts
  $\Gamma(Q)\to\Gamma_{TM}^l(TQ)$ and
  $\Gamma(B^*)\to\Gamma_{TM}^l(TB^*)$, both denoted by $h$. More
  precisely, given a section $q\in\Gamma(Q)$, its lift is defined as
  $h(q) = Tq - (\nabla_{.}q)^\wedge$.
  A similar formula holds for
  $h(\beta)$ as well. Then an easy computation yields the following:
\begin{enumerate}
	\item $\rho_{TQ}(q^\dagger) = \rho(q)^\uparrow$ and $(T\ell)(\beta^\dagger) = \ell(\beta)^\uparrow$
	\item $\rho_{TQ}(h(q)) = X_{\nabla_q^{\text{bas}}}$
	\item $(T\ell)(h(\beta)) = h(\ell(\beta)) + (\nabla_.(\ell(\beta)) - \ell(\nabla_.\beta))^\wedge$
	\item $[h(q_1),h(q_2)]_{TQ} = h[q_1,q_2]_Q - R_\nabla^{\text{bas}}(q_1,q_2)^\wedge$
	\item $[h(q_1),q_2^\dagger]_{TQ} = (\nabla_{q_1}^{\text{bas}}q_2)^\dagger$
	\item $(T\nabla^*)_{h(q)}(\beta^\dagger) = (\nabla_q^*\beta)^\dagger$ 
	\item $(T\nabla^*)_{q^\dagger}(h(\beta)) = (\nabla_q^*\beta - \nabla_{\rho(q)}\beta)^\dagger$
	\item
          $(T\nabla^*)_{h(q)}(h(\beta)) =
          h(\nabla^*_q\beta) + \left(\nabla_{\nabla_\cdot q}^*\beta -
            \nabla_{\rho(\nabla_\cdot q)}\beta + \nabla^*_q\nabla_\cdot\beta -
            \nabla_\cdot\nabla_q^*\beta -
            \nabla_{[\rho(q),\cdot]}\beta\right)^\wedge$
	\item $(T\omega)(h(q_1),h(q_2),h(q_3)) = h(\omega(q_1,q_2,q_3)) + ((\nabla_\cdot\omega)(q_1,q_2,q_3))^\wedge$
	\item $(T\omega)(h(q_1),h(q_2),q_3^\dagger) = (\omega(q_1,q_2,q_3))^\dagger$.
        \end{enumerate}
      \end{example}
        
        In fact, this result is a special case of a
        correspondence between VB-Lie $n$-algebroid structures on a decomposed graded double vector bundle
        $(\underline{D},V,\A,M)$ and $(n+1)$-representations of
        $\M=\A$ on the complex
        $\E=V[0] \oplus C_1[1] \oplus \ldots \oplus C_n[n]$. In the
        general case, it is easier to give the correspondence in terms
        of the homological vector field on $\underline{D}$ and the
        dual representation on
        $\E^*=C_n^*[-n]\oplus\ldots\oplus C_1^*[-1] \oplus V^*[0]$.
    
        Suppose that $(\underline{D},V,\A,M)$ is a VB-Lie
        $n$-algebroid with homological vector fields
        $\Q_{\underline{D}}$ and $\Q_{\A}$, and choose a decomposition
        for each double vector bundle $(D_i,V,A_i,M)$\footnote{In the
          case of the tangent Lie $n$-algebroid, this corresponds to
          choosing the $TM$-connections on the vector bundles of the
          adjoint complex.}, and consequently for
        $(\underline{D},V,\A,M)$. Consider the dual
        $\underline{D}_V^*$ and recall that the spaces
        $\Gamma_V(D_i^*)$ are generated as $C^\infty(V)$-modules by
        core and linear sections. For the latter, use the
        identification
        $\Gamma_V^l(D_i^*) = \Gamma(A_i^*\otimes
        V^*)\oplus\Gamma(C_i^*)$ induced by the
        decomposition. Accordingly, the element
        $\alpha\in\Gamma(A_i^*)$ is identified with the core section
        $\pi_{\A}^{\star}(\alpha)\in\Gamma_V^c(\underline{D}^*)$.
    
        For all $\psi\in\Gamma(V^*)$, the 1-form
        $\diff\ell_\psi$ is a linear section of $T^*V\to V$ over
        $\psi$ and the anchor $\rho_{D_1}\colon D_1\to TV$ is a morphism of
        double vector bundles. This implies that the degree 1 function
        $\Q_{\underline{D}}(\ell_\psi)=\rho_{D_1}^*\diff\ell_\psi$ is
        a linear section of $\Gamma_V(\underline{D}^*)$ and thus
    \[
    \Q_{\underline{D}}(\ell_\psi)\in\Gamma_V^l(D_1^*) = \Gamma(A_1^*\otimes V^*)\oplus\Gamma(C_1^*).
    \]
    Moreover, due to the decomposition, $D_i=q_V^!(A_i\oplus C_i)$ as
    vector bundles over $V$ for all $i=1,\ldots,n$. Given
    $\gamma\in\Gamma(C_i^*)$, the function
    $\Q_{\underline{D}}(\gamma)$ lies in
    $\Gamma(\underline{S}^{i+1}(\underline{D}_V^*))$, where
    $\underline{D}_V^*=q_V^!(A_1^*\oplus C_1^*)\oplus\ldots\oplus
    q_V^!(A_n^*\oplus C_n^*)$. A direct computation shows that the
    components of $\Q_{\underline{D}}(\gamma)$ which lie in spaces
    with two or more sections of the form $\Gamma(q_V^!C_i^*)$ and
    $\Gamma(q_V^!C_j^*)$ vanish due to the bracket conditions of a
    VB-Lie $n$-algebroid. Therefore, define the representation
    $\D^*$ of $\A$ on the dual complex $\E^*$ by the equations
    \begin{center}
    	\begin{tabular}{l c r}
    		$\Q_{\underline{D}}(\ell_\psi) = \D^*(\psi)$ & and & $\Q_{\underline{D}}(\gamma) = \D^*(\gamma)$,
    	\end{tabular}
    \end{center}
    for all $\psi\in\Gamma(V^*)$ and all
    $\gamma\in\Gamma(C_i^*)$.
    
    Conversely, given a representation $\D^*$ of $\A$ on $\E^*$, the
    above equations together with
    \begin{center}
    	\begin{tabular}{l c r}
          $\Q_{\underline{D}}(q_V^*f) = \pi_{\A}^\star(\Q_{\A}(f))$ & and & $\Q_{\underline{D}}(\pi_{\A}^\star(\alpha)) = \pi_{\A}^\star(\Q_{\A}(\alpha))$
    	\end{tabular}
    \end{center}
    for all $f\in C^\infty(M)$ and $\alpha\in\Gamma(\A^*)$, define a
    VB-Lie $n$-algebroid structure on the double vector bundle
    $(\underline{D},V,\A,M)$.
    This yields the following theorem.
    \begin{theorem}
    	Let $(\underline{D},V,\A,M)$ be a decomposed graded double vector
    	bundle as above with core $\underline{C}$. There is a 1-1
    	correspondence between VB-Lie $n$-algebroid structures on
    	$(\underline{D},V,\A,M)$ and $(n+1)$-representations up to
    	homotopy of $\A$ on the complex
    	$V[0] \oplus C_1[1] \oplus \ldots \oplus C_n[n]$.
    \end{theorem}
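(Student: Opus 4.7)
The plan is to unwind both structures through a chosen decomposition of $(\underline{D},V,\A,M)$ and read off the correspondence from how $\Q_{\underline{D}}$ acts on a generating set of $\cin(\underline{D})$. After fixing the decomposition, the sheaf $\cin(\underline{D})$ is freely generated as a graded commutative $\cin(\A)$-algebra by the linear fibre coordinates $\ell_\psi$ for $\psi\in\Gamma(V^*)$ and by the core coordinates $\gamma\in\Gamma(C_i^*)$; since $\Q_{\underline{D}}$ is a derivation, it is determined by its values on these four types of generators (including $q_V^*f$ and $\pi_{\A}^\star(\alpha)$), and the latter two values are forced to agree with $\pi_{\A}^\star(\Q_{\A}(f))$ and $\pi_{\A}^\star(\Q_{\A}(\alpha))$ by the requirement that $\underline{D}\to\A$ be a morphism of Lie $n$-algebroids. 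This frees the correspondence to be one between the remaining data on $\Q_{\underline{D}}$ and the representation differential $\D^*$.

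For the forward direction, I would use the identification $\Gamma_V^l(D_i^*)\simeq \Gamma(A_i^*\otimes V^*)\oplus\Gamma(C_i^*)$ given by the decomposition. The condition that $\rho_{D_1}$ be a morphism of double vector bundles forces $\Q_{\underline{D}}(\ell_\psi)=\rho_{D_1}^*\dr\ell_\psi$ to be a linear section of $\underline{D}_V^*$, so it lies in the degree $1$ piece of $\cin(\A)\otimes\Gamma(\E^*)$ landing in the $V^*$ slot. For $\gamma\in\Gamma(C_i^*)$, the bracket conditions (iii)--(iv) of Definition \ref{VB_lien} translate, under the duality pairing core and linear sections of $\underline{D}$ against core and linear coordinates, into the statement that $\Q_{\underline{D}}(\gamma)\in\Gamma(\underline{S}^{i+1}(\underline{D}_V^*))$ contains no term with two or more core factors. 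The surviving summands then assemble precisely into the degree $1$ component of $\cin(\A)\otimes\Gamma(C_i^*[-i])$. Setting $\D^*(\psi):=\Q_{\underline{D}}(\ell_\psi)$ and $\D^*(\gamma):=\Q_{\underline{D}}(\gamma)$ yields the desired operator; the Leibniz rule is inherited from that of $\Q_{\underline{D}}$, and $(\D^*)^2=0$ follows from $\Q_{\underline{D}}^2=0$ applied to the same generators.

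Conversely, given $\D^*$ and $\Q_{\A}$, define $\Q_{\underline{D}}$ on generators by the four prescribed formulas and extend by the graded Leibniz rule; this is unambiguous because the generators freely generate $\cin(\underline{D})$. The fact that $\Q_{\underline{D}}$ preserves the bigrading on $\cin(\underline{D})$ induced by the double vector bundle structure is built in, and translates directly into the linearity requirements of Definition \ref{VB_lien}: $\Q_{\underline{D}}(\ell_\psi)\in\Gamma_V^l(D_1^*)$ says $\rho_{D_1}$ is a double vector bundle map; the vanishing of multi-core components in $\Q_{\underline{D}}(\gamma)$ reproduces the vanishing of $i$-brackets with $k\ge 2$ core arguments; and the remaining components encode the linear/core brackets. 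Then $\Q_{\underline{D}}^2=0$ on generators reduces to $(\D^*)^2=0$ together with $\Q_{\A}^2=0$, and the two constructions are mutually inverse by inspection on generators.

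The main obstacle is the bookkeeping in the dictionary between terms of $\Q_{\underline{D}}(\gamma)$ and the VB-bracket axioms. Concretely, one must verify by a super-Poisson style calculation that the $i$-bracket of sections of $\underline{D}$ is recovered from $\Q_{\underline{D}}$ by dualising, and that for each partition into core versus linear arguments exactly the expected component of $\Q_{\underline{D}}(\ell_\psi)$ or $\Q_{\underline{D}}(\gamma)$ contributes. Once this matching is established, both directions and their mutual inverseness follow mechanically, so the rest of the proof is essentially the degree-counting already sketched in the paragraphs preceding the theorem.
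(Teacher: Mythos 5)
Your proposal is correct and follows essentially the same route as the paper: fix a decomposition, observe that $\Q_{\underline{D}}$ is determined by its values on the generators $q_V^*f$, $\ell_\psi$, $\pi_{\A}^\star(\alpha)$ and $\gamma\in\Gamma(C_i^*)$, use the double vector bundle and bracket conditions to show that $\Q_{\underline{D}}(\ell_\psi)$ and $\Q_{\underline{D}}(\gamma)$ land in $\cin(\A)\otimes\Gamma(\E^*)$ (the multi-core components vanishing), and read off the dual representation $\D^*$ on $\E^*=C_n^*[-n]\oplus\ldots\oplus C_1^*[-1]\oplus V^*[0]$, with the converse obtained by running the same dictionary backwards. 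The paper likewise leaves the term-by-term matching between the components of $\Q_{\underline{D}}(\gamma)$ and the VB-bracket axioms as a "direct computation", so your acknowledged bookkeeping step is exactly where the paper's own argument also remains at the level of a sketch.
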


\section{Constructions in terms of splittings}\label{applications}

This section presents in terms of splittings two of the applications
of the adjoint and coadjoint representations that were defined
before. First, there is an explicit description of the Weil algebra of
a split Lie $n$-algebroid together with its structure differentials, in
terms of vector bundles and connections similarly to
\cite{ArCr12}. Second, the map between the coadjoint and the adjoint
representations in the case of a Poisson Lie $n$-algebroid for degrees
$n\leq2$ is examined in detail.

\subsection{The Weil algebra of a split Lie $n$-algebroid}
Suppose first that $\M = Q[1]\oplus B^*[2]$ is a split Lie 2-algebroid
and consider two $TM$-connections on the vector bundles $Q$ and $B^*$,
both denoted by $\nabla$. Recall from Section
\ref{adjoint_module_adjoint_representation_isomorphism} the
(non-canonical) isomorphism of DG $\M$-modules
\[
\mathfrak{X}(\M)\cong\cin(\M)\otimes\Gamma(TM[0]\oplus Q[1]\oplus B^*[2]).
\]
 This implies that
\[
\Omega^1(\M)\cong\cin(\M)\otimes\Gamma(B[-2]\oplus Q^*[-1]\oplus T^*M[0])
\]
as (left) DG $\M$-modules, and thus the generators of the Weil algebra can be
identified with
\[
\underset{\text{($t$,0)}}{\underbrace{\cin(\M)^t}}, 
\underset{\text{(0,$u$)}}{\underbrace{\Gamma(\wedge^uT^*M)}},
\underset{\text{$(\upsilon,\upsilon)$}}{\underbrace{\Gamma(S^\upsilon Q^*)}}, 
\underset{\text{$(2w,w)$}}{\underbrace{\Gamma(\wedge^w B)}}.
\]
Using also that
$\cin(\M)^t=\bigoplus_{t=r+2s} \Gamma(\wedge^rQ^*)\otimes\Gamma(S^s
B)$, the space of $(p,q)$-forms is decomposed as
\begin{align*}
  W^{p,q}(\M,\nabla) = & \bigoplus_{\substack{p=t+v+2w \\ q=u+w+v}} \cin(\M)^t\otimes
  \Gamma\left( \wedge^uT^*M\otimes S^vQ^*\otimes \wedge^wB \right) \\
  = & \bigoplus_{\substack{p=r+2s+v+2w \\ q=u+w+v}} \Gamma\left( \wedge^uT^*M\otimes
  \wedge^rQ^*\otimes S^vQ^*\otimes \wedge^wB\otimes S^sB \right).
\end{align*}
Therefore, after a choice of splitting and $TM$-connections $\nabla$
on $Q$ and $B^*$, the total space of the Weil algebra of $\M$ can be
written as
\[
  W(\M,\nabla) = \bigoplus_{r,s,u,v,w} \Gamma\left(
    \wedge^uT^*M\otimes \wedge^rQ^*\otimes S^vQ^*\otimes
    \wedge^wB\otimes S^sB \right).
\]

The next step is to express the differentials $\ldr{\Q}$ and $\dr$ on
$W(\M,\nabla)$ in terms of the two $TM$-connections $\nabla$. For the
horizontal differential, recall that by definition the $q$-th row of
the double complex $W(\M,\nabla)$ equals the space of $q$-forms
$\Omega^q(\M)$ on $\M$ with differential given by the Lie derivative
$\ldr{\Q}$. Due to the identification of DG $\M$-modules
\[
  \Omega^q(\M)=\Omega^1(\M)\wedge\ldots\wedge\Omega^1(\M)
  =\cin(\M)\otimes\Gamma(\ad_\nabla^*\wedge\ldots\wedge\ad_\nabla^*)
\]
($q$-times) and the Leibniz identity for $\ldr{\Q}$, it follows that
the $q$-th row of $W(\M,\nabla)$ becomes the $q$-symmetric power of
the coadjoint representation $\underline{S}^q(\ad_\nabla^*)$ and
$\ldr{\Q}=\D_{\underline{S}^q(\ad_\nabla^*)}$.

The vertical differential $\dr$ is built from two 2-representations of
the tangent Lie algebroid $TM$, namely the dualization of the $TM$-representations on the graded vector bundles
$\E_{Q}=Q[0]\oplus Q[-1]$ and $\E_{B^*}=B^*[0]\oplus B^*[-1]$
whose
differentials are given by the chosen $TM$-connections
$(\id_Q,\nabla,R_\nabla)$ and $(\id_{B^*},\nabla,R_\nabla)$,
respectively. Indeed, suppose first that $\tau\in\Gamma(Q^*)$ and
$b\in\Gamma(B)$ are functions on $\M$, i.e.~$0$-forms. Then from Remark \ref{Iso_coad_mod_coad_rep}, it follows
that $\dr$ acts via
\[
  \dr\tau = \tau + \diff_{\nabla^*}\tau\qquad \text{and}\qquad \dr b
  = b + \diff_{\nabla^*}b.
\]
If now $\tau\in\Gamma(Q^*),b\in\Gamma(B)$ are 1-forms on $\M$, then
\[
  \dr\tau=\dr(\tau+\diff_{\nabla^*}\tau-\diff_{\nabla^*}\tau)
  =\dr^2\tau-\dr(\diff_{\nabla^*}\tau)=\diff_{\nabla^*}\tau-\diff_{\nabla^*}^2\tau,
\]
\[
\dr b=\dr(b+\diff_{\nabla^*}b-\diff_{\nabla^*}b)=\dr^2b-\dr(\diff_{\nabla^*}b)=\diff_{\nabla^*}b-\diff_{\nabla^*}^2b.
\]

\begin{remark}
  Note that if $B^*=0$, i.e.~$\M$ is an ordinary Lie algebroid
  $A\to M$, the above construction recovers (up to isomorphism) the
  connection version of the Weil algebra $W(A,\nabla)$ from
  \cite{ArCr11,ArCr12,Mehta09}.
\end{remark}

In the general case of a split Lie $n$-algebroid
$\M=A_1[1]\oplus\ldots\oplus A_n[n]$ with a choice of $TM$-connections
on all the bundles $A_i$, one may apply the same procedure as above to
obtain the (non-canonical) DG $\M$-module isomorphisms
\[
\mathfrak{X}(\M)\cong\cin(\M)\otimes\Gamma(TM[0]\oplus A_1[1]\oplus\ldots\oplus A_n[n])
\]
\[
\Omega^1(\M)\cong\cin(\M)\otimes\Gamma(A_n^*[-n]\oplus\ldots\oplus A_1^*[-1]\oplus T^*M[0]),
\]
and hence the identification of the generators of the Weil algebra
with
\[
\underset{\text{($t$,0)}}{\underbrace{\cin(\M)^t}}, 
\underset{\text{(0,$u$)}}{\underbrace{\Gamma(\wedge^{u}T^*M)}},
\underset{\text{$(\upsilon_1,\upsilon_1)$}}{\underbrace{\Gamma(S^{\upsilon_1} A_1^*)}},
\underset{\text{$(2\upsilon_2,\upsilon_2)$}}{\underbrace{\Gamma(\wedge^{\upsilon_2} A_2^*)}},\ldots, 
\underset{\text{$(n\upsilon_n,\upsilon_n)$}}{\underbrace{\Gamma(\wedge^{\upsilon_n} A_n^*)}}.
\] 
This then yields
\begin{align*}
  W^{p,q}(\M,\nabla) = & \bigoplus_{\substack{p=t+v_1+2v_2+\ldots \\ q=u+v_1+v_2+\ldots}} \cin(\M)^t\otimes\Gamma\left( \wedge^uT^*M\otimes S^{v_1}A_1^*\otimes \wedge^{v_2}A_2^*\otimes\ldots \right) \\
  = & \bigoplus_{\substack{p=r_1+v_1+2r_2+2v_2+\ldots \\ q=u+v_1+v_2+\ldots}} \Gamma\left( \wedge^uT^*M\otimes \wedge^{r_1}A_1^*\otimes S^{v_1}A_1^*\otimes S^{r_2}A_2^*\otimes \wedge^{v_2}A_2^*\otimes\ldots \right).
\end{align*}
Similar considerations as before imply that the $q$-th row of
$W(\M,\nabla)$ is given by $\underline{S}^q(\ad_\nabla^*)$ with
$\ldr{\Q}=\D_{\underline{S}^q(\ad_\nabla^*)}$, and that $\dr$ is built
again by the dualization of the 2-representations of $TM$ on the
graded vector bundles $\underline{E}_{A_i}=A_i[0]\oplus A_i[-1]$, for
$i=1,\ldots,n$, whose differentials are given by
$(\id_{A_i},\nabla,R_{\nabla})$.

\subsection{Poisson Lie algebroids of low degree}\label{morphism_of_ad*_ad_Poisson012}

This section describes in detail the degree $-n$ (anti-)morphism
$\sharp\colon\ad_\nabla^*\to\ad_\nabla$ of (left) right $n$-representations in the case of Poisson Lie
$n$-algebroids for $n=0,1,2$. Recall that the map $\sharp$ sends an
exact $1$-form $\diff\xi$ of the graded manifold $\M$ to the vector
field $\{\xi,\cdot\}$.

First, consider a Poisson Lie 0-algebroid, i.e.~a usual Poisson
manifold $(M,\{\cdot\,,\cdot\})$. Then the Lie $0$-algebroid is just
$M$, with a trivial homological vector field -- it can be thought of as a
trivial Lie algebroid $A=0\times M\to M$ with trivial differential $\diff_A=0$, and consequently trivial homological vector field.  The
coadjoint and adjoint representations are just the vector bundles
$T^*M[0]$ and $TM[0]$, respectively, with zero module differentials,
and the map $\sharp$ simply becomes the usual vector bundle map
induced by the Poisson bivector field that corresponds to the Poisson
bracket
\[
\sharp\colon T^*M[0] \to TM[0].
\]

Consider a Lie algebroid $A\to M$ with anchor $\rho\colon A\to TM$ and
a linear Poisson structure $\{\cdot\,,\cdot\}$, i.e.~a Lie algebroid
structure on the dual $A^*\to M$.  It is easy to see that this means
that the [1]-manifold $A[1]$ has a Poisson structure of degree $-1$.
This Poisson structure is the Schouten bracket defined on
$\Omega^\bullet(A)$ by the Lie algebroid bracket on $A^*$. Then it is
immediate that $(A[1],\diff_A,\{\cdot\,,\cdot\})$ is a Poisson Lie
$1$-algebroid if and only if $(A,A^*)$ is a Lie bialgebroid. The
latter is equivalent to $(A,\{\cdot\,,\cdot\})$ being a Poisson Lie
algebroid \cite{MaXu00}.

Let $\rho'\colon A^*\to TM$,
$\alpha\mapsto \{\alpha,\cdot\}|_{C^\infty(M)}$ and
$[\cdot\,,\cdot]_*:=\{\cdot\,,\cdot\}|_{\Omega^1(A)\times\Omega^1(A)}$
be the anchor and bracket of $A^*$, respectively. After a choice of a
$TM$-connection $\nabla$ on the vector bundle $A$, the map
$\sharp\colon \ad_\nabla^*\to\ad_\nabla$ acts via
\[
\sharp(\diff f)=\sharp_0(\diff f)= \{f,\cdot\}=-\rho'^*(\diff f)\in\Gamma(A)
\]
\[
\sharp(\beta)=\sharp_0(\beta)+\sharp_1(\cdot)\beta\in\mathfrak{X}(M)\oplus(\Gamma(A^*)\otimes\Gamma(A))
\]
for all $f\in C^\infty(M),\beta\in\Omega^1(A)$, where we identify
$\beta$ with $\dr\beta-\diff_{\nabla^*}\beta$, i.e.
$\sharp(\beta) = \sharp(\dr\beta) - \sharp(\diff_{\nabla^*}\beta) =
\{\beta,\cdot\}-\sharp(\diff_{\nabla^*}\beta)$. Computing how these
act on $\alpha\in\Omega^1(M)$ and $g\in C^\infty(M)$, viewed as
functions of the graded manifold $A[1]$, gives the components of $\sharp(\beta)$: From the right-hand-side of the equation we obtain
\[
\left(\sharp_0(\beta)+\sharp_1(\cdot)\beta\right)g = \sharp_0(\beta)g\in C^\infty(M)
\]
while from the left-hand-side we obtain
\[
\sharp(\beta)g = \sharp(\dr \beta - \diff_{\nabla^*}\beta)g  = 
\{\beta,g\} - \sharp(\diff_{\nabla^*}\beta)g =  \rho'(\beta)g.
\]
From this, it follows that $\sharp_0(\beta) = \rho'(\beta)$. Using now this, the right-hand-side gives
\[
\left(\sharp_0(\beta)+\sharp_1(\cdot)\beta\right)\alpha =  \nabla^*_{\rho'(\beta)}\alpha + \left(\sharp_1(\cdot)\beta\right)\alpha\in\Gamma(A^*)\oplus\Gamma(A^*)
\]
while the left-hand-side gives
\[
\sharp(\beta)\alpha = \sharp(\dr \beta - \diff_{\nabla^*}\beta)\alpha  = 
\{\beta,\alpha\} - \sharp(\diff_{\nabla^*}\beta)\alpha =
[\beta,\alpha]_* + \nabla^*_{\rho'(\alpha)}\beta = (\nabla^*)^{\text{bas}}_\beta\alpha.
\]
This implies that $(\sharp_1(\cdot)\beta)\alpha = (\nabla^*)^{\text{bas}}_\beta\alpha - \nabla^*_{\rho'(\alpha)}\beta$ and thus $\sharp$ consists
of the ($-1$)-chain map $\sharp_0$ given by the anti-commutative diagram
\[
\begin{tikzcd}
T^*M[0] \arrow[r, "-\rho^*"] \arrow[d, "-\rho'^*"] & A^*[-1] \arrow[d, "\rho'"] \\
A[1] \arrow[r, "\rho"]                            & TM  [0]                  
\end{tikzcd}
\]
together with
$\sharp_1(a)\beta = \langle (\nabla^*)^{\text{bas}}_\beta(\cdot) -
\nabla^*_{\rho'(\beta)}(\cdot),a
\rangle\in\Gamma(A^{**})\simeq\Gamma(A)$, for all
$\beta\in\Gamma(A^*)$ and $a\in\Gamma(A)$.

By Theorem
\ref{thm_poisson}, $\sharp$ is an (anti-)morphism of $2$-representations if
and only $(A[1],\diff_A,\{\cdot\,,\cdot\})$ is a Poisson Lie
$1$-algebroid. Hence, $\sharp$ is an (anti-)morphism of $2$-representations if
and only if $(A,A^*)$ is a Lie bialgebroid.  Similarly,
\cite{GrJoMaMe18} shows that $\ad_\nabla^*$ and $\ad_\nabla$ form a
\emph{matched pair} if and only if $(A,A^*)$ is a Lie bialgebroid.

Note that $(A,\{\cdot\,,\cdot\})$ is a Poisson Lie algebroid if the
induced vector bundle morphism $\sharp\colon T^* A\to TA$ over $A$ is
a VB-algebroid morphism over $\rho'\colon A^*\to TM$
\cite{MaXu00}. Then the fact that
$\sharp\colon \ad_\nabla^*\to\ad_\nabla$ is an (anti-)morphism of
$2$-representations follows immediately \cite{DrJoOr15}, since
$\ad_\nabla^*$ and $\ad_\nabla$ are equivalent to decompositions of
the VB-algebroids $(T^*A\to A^*, A\to M)$ and $(TA\to TM, A\to M)$,
respectively.

\medskip Now consider the case of 2-algebroids. First recall that a
symplectic Lie 2-algebroid over a point, that is, a Courant algebroid
over a point, is a usual Lie algebra $(\mathfrak{g},[\cdot\,,\cdot])$
together with a non-degenerate pairing
$\langle \cdot\,,\cdot \rangle\colon
\mathfrak{g}\times\mathfrak{g}\to\mathfrak{g}$, such that
\[
  \langle [x,y],z \rangle + \langle y,[x,z] \rangle = 0\ \text{for
    all}\ x,y,z\in\mathfrak{g}.
\]
Using the adjoint and coadjoint representations
$\ad\colon\mathfrak g\to \End(\mathfrak g)$, $x\mapsto [x,\cdot]$, and
$\ad^*\colon \mathfrak g\to\End(\mathfrak g^*)$, $x\mapsto -\ad(x)^*$, and
denoting the canonical linear isomorphism induced by the pairing by
$P\colon \mathfrak{g}\to\mathfrak{g^*}$, the equation above reads
\[
P(\ad(x)y) = \ad^*(x)P(y)\ \text{for all}\ x,y\in\mathfrak{g}.
\]
In other words, this condition is precisely what is needed to turn the
vector space isomorphism $P$ into an isomorphism of Lie algebra
representations between $\ad$ and $\ad^*$. In fact, the map
$\sharp\colon \ad^*\to\ad$ for Poisson Lie 2-algebroids is
a direct generalisation of this construction.

Let $B\to M$ be a usual Lie algebroid with a 2-term representation
$(\nabla^Q,\nabla^{Q^*},R)$ on a complex $\partial_Q\colon Q^*\to
Q$. The representation is called \textit{self dual} \cite{Jotz18b} if
it equals its dual, i.e.~$\partial_Q=\partial_Q^*$, the connections
$\nabla^Q$ and $\nabla^{Q^*}$ are dual to each other, and
$R^*=-R\in\Omega^2(B,\Hom(Q,Q^*))$,
i.e.~$R\in\Omega^2(B,\wedge^2Q^*)$.  \cite{Jotz18b} further shows that
Poisson brackets $\{\cdot\,,\cdot\}$ on a split Lie 2-algebroid
$Q[1]\oplus B^*[2]$ correspond to self dual 2-representations of $B$
on $Q^*[1]\oplus Q[0]$ as follows: the bundle map $\partial_Q\colon
Q^*\to Q$ is
$\tau\mapsto\{ \tau,\cdot \}|_{\Omega^1(Q)}$, the anchor $\rho_B\colon B\to TM$ is
$b\mapsto\{ b,\cdot \}|_{C^\infty(M)}$, the $B$-connection on $Q^*$ is given by
$\nabla^{Q^*}_b\tau=\{b,\tau\}$, and the 2-form $R$ and the Lie bracket of
$B$ are defined by
$\{b_1,b_2\} = [b_1,b_2] - R(b_1,b_2)\in\Gamma(B)\oplus\Omega^2(Q)$.

Fix now a Poisson Lie 2-algebroid $(\M,\Q,\{\cdot\,,\cdot\})$ together
with a choice of a splitting $Q[1]\oplus B^*[2]$ for $\M$, a pair of
$TM$-connections on $B^*$ and $Q$, and consider the representations
$\ad_\nabla$ and $\ad_\nabla^*$. Similarly as before, we have that
\[
\sharp(\diff f) = \sharp_0(\diff f) = \{f,\cdot\} = -\rho_B^*(\diff f)\in\Gamma(B^*)
\]
\[
  \sharp(\tau) = \sharp_0(\tau) +
  \sharp_1(\cdot)\tau\in\Gamma(Q)\oplus(\Omega^1(Q)\otimes\Gamma(B^*))
\]
\[
  \sharp(b) = \sharp_0(b) + \sharp_1(\cdot)b + \sharp_2(\cdot\,,\cdot)b +
  \sharp^b(\cdot)b\in\mathfrak{X}(M)\oplus\Omega^1(Q,Q)\oplus\Omega^2(Q,B^*)\oplus(\Gamma(B)\otimes\Gamma(B^*))
\]
for $f\in C^\infty(M),\tau\in\Gamma(Q^*),b\in\Gamma(B)$, where we
identify $\tau$ with $\dr\tau-\diff_{\nabla^*}\tau$ and $b$ with
$\dr b-\diff_{\nabla^*}b$. Then the map
$\sharp\colon \ad^*_\nabla\to\ad_\nabla$ consists of the ($-2$)-chain
map given by the anti-commutative diagram
\[
\begin{tikzcd}
  T^*M[0] \arrow[r, "-\rho_Q^*"] \arrow[d, "-\rho_B^*"] & Q^*[-1]
  \arrow[r, "-\partial_B"] \arrow[d, "\partial_Q"]
  & B[-2] \arrow[d, "\rho_B"] \\
  B^*[2] \arrow[r, "-\partial_B^*"] & Q[1] \arrow[r, "\rho_Q"] & TM[0]
\end{tikzcd}
\]
and the elements
\[
\sharp_1(q)\tau = \langle \tau, \nabla^Q_\cdot q - \nabla_{\rho_B(\cdot)}q \rangle\in\Gamma(B^*)
\]
\[
\sharp_1(q)b = \nabla_b^Q q - \nabla_{\rho_B(b)}q\in\Gamma(Q)
\]
for $q\in\Gamma(Q),\tau\in\Gamma(Q^*),b\in\Gamma(B)$,
\[
\sharp_2(q_1,q_2)b = - \langle R(b,\cdot)q_1,q_2 \rangle\in\Gamma(B^*)
\]
for $q_1,q_1\in\Gamma(Q),b\in\Gamma(B)$, where $R$ is the
component that comes from the self-dual 2-representation of
$B$ from the Poisson structure,
\[
  \sharp^b(\beta)b = \langle \beta,\nabla^{\text{bas}}_b(\cdot) -
  \nabla^*_{\rho_B(b)}(\cdot) \rangle \in\Gamma(B^*)
\]
for $\beta\in\Gamma(B^*),b\in\Gamma(B)$. 

Suppose now that the split Lie 2-algebroid is symplectic, i.e.~that it
is of the form $E[1]\oplus T^*M[2]$ for a Courant algebroid $E\to
M$. The only thing that is left from the construction in the Example
\ref{Split_symplectic_Lie_2-algebroid_example} is a choice of a
$TM$-connection on $TM$, and hence on the dual $T^*M$. The (anti-)isomorphism
$\sharp\colon \ad_\nabla^*\to\ad_\nabla$ consists of the (-2)-chain
map of the anti-commutative diagram
\[
\begin{tikzcd}
  T^*M[0] \arrow[r, "-\rho^*"] \arrow[d, "-\id"] & E^*[-1] \arrow[r, "\rho"] \arrow[d, "P^{-1}"] & TM[-2] \arrow[d, "\id"] \\
  T^*M[2] \arrow[r, "\rho^*"] & E[1] \arrow[r, "\rho"] & TM [0]
\end{tikzcd}
\]
where $P\colon E\overset{\sim}{\to} E^*$ is the pairing, and the elements
$\langle \sharp_2(e_1,e_2)X,Y \rangle = - \langle
R_\nabla(X,Y)e_1,e_2 \rangle$ and
$\langle \sharp^b(\alpha)X, Y \rangle = - \langle
\alpha,T_\nabla(X,Y) \rangle$. Its inverse consists of the
2-chain map given by the anti-commutative diagram
\[
\begin{tikzcd}
T^*M[2] \arrow[r, "\rho^*"] \arrow[d, "-\id"] & E[1] \arrow[r, "\rho"] \arrow[d, "P"] & TM[0] \arrow[d, "\id"] \\
T^*M[0] \arrow[r, "-\rho^*"]                   & E^*[-1] \arrow[r, "\rho"]                      & TM  [-2]              
\end{tikzcd}
\]
and the elements
$\langle \sharp^{-1}_2(e_1,e_2)X,Y \rangle = - \langle
R_\nabla(X,Y)e_1,e_2 \rangle$ and
$\langle (\sharp^{-1})^b(\alpha)X, Y \rangle = - \langle
\alpha,T_\nabla(X,Y) \rangle$. In other words, $\sharp^2=\id$. If the
connection on $TM$ is torsion-free, then the terms $\sharp^b$ and
$(\sharp^{-1})^b$ vanish, as well. In particular, if the base manifold
$M$ is just a point, then the bundles $TM$ and $T^*M$, and the
elements $\sharp_2$ and $\sharp^{-1}_2$ are zero. Therefore, the map
$\ad^*_\nabla\to\ad_\nabla$ reduces to the linear isomorphism of the pairing and agrees with the one explained above.

\def\cprime{$'$} \def\polhk#1{\setbox0=\hbox{#1}{\ooalign{\hidewidth
  \lower1.5ex\hbox{`}\hidewidth\crcr\unhbox0}}} \def\cprime{$'$}
  \def\cprime{$'$}

	
\end{document}